\documentclass[11pt]{amsart}
\textheight 620pt \textwidth 450pt
\oddsidemargin 2.5mm \evensidemargin 2.5mm
\topmargin 0mm

\usepackage{qtree}
\usepackage{tikz-cd}
\usepackage{mathtools}
\usepackage{latexsym}
\usepackage{amsfonts}
\usepackage{amssymb}
\usepackage{amsmath}
\usepackage{mathrsfs}
\usepackage{hyperref}



\newtheorem{definition}{Definition}[section] 
\newtheorem{theorem}[definition]{Theorem}
\newtheorem{lemma}[definition]{Lemma}
\newtheorem{proposition}[definition]{Proposition}
\newtheorem{corollary}[definition]{Corollary} 

\newtheorem{remark}[definition]{Remark}

\title[K\"ahler structure on certain $C^*$-dynamical systems]{K\"ahler structure on certain $C^*$-dynamical systems and\\
the noncommutative even dimensional tori}
\author{Satyajit Guin}
\address{Indian Institute of Science Education and Research, Mohali, Punjab 140306}
\email{satyamath@gmail.com , satyajit@iisermohali.ac.in}

\keywords{complex structure, K\"ahler structure, $C^*$-dynamical system, spectral triple, noncommutative torus}
\date{\today}
\subjclass[2010]{58B34, 46L87}

\begin{document}
\rmfamily

\begin{abstract}
Let $G$ be an even dimensional, connected, abelian Lie group and $(\mathcal{A}^\infty,G,\alpha,\tau)$ be a $C^*$-dynamical system equipped with a
faithful $G$-invariant trace $\tau$. We show that whenever it determines a $\varTheta$-summable even spectral triple, $\mathcal{A}^\infty$ inherits
a K\"ahler structure. Moreover, there are at least $\prod_{j=1,\,j\,odd}^{\,dim(G)}(dim(G)-j)$ different K\"ahler structures. In particular, whenever
$\mathbb{T}^{2k}$ acts ergodically on the algebra, it inherits a K\"ahler strcture. This gives a class of examples of noncommutative K\"ahler manifolds.
As a corollary, we obtain that all the noncommutative even dimensional tori, like their classical counterpart the complex tori, are noncommutative
K\"ahler manifolds. We explicitly compute the space of complex differential forms for the noncommutative even dimensional tori and show that the
category of holomorphic vector bundle over it is an abelian category. We also explain how the earlier set-up of Polishchuk-Schwarz for the holomorphic
structure on noncommutative two-torus follows as a special case of our general framework.
\end{abstract}

\maketitle

\section {Introduction}
Classical differential geometry was extended to the noncommutative world of $\,C^*$-algebras in the early $80$'s by Connes in (\cite{Con1}), and
subsequently in (\cite{Con2}). Many highly singular (and classically intractable) objects such as the dual of a discrete group, Penrose tilings or
quantum groups may be analyzed by applying cyclic cohomology, K-theory and other tools of noncommutative geometry. Apart from its own mathematical
beauty, several fruitful applications of noncommutative geometry in physics (see for e.g. \cite{SW},\cite{CMA},\cite{Con3}) have also been observed.
Despite much progress in noncommutative geometry in past $30$ years, noncommutative complex geometry is not developed that much yet. Connes-Cuntz
first outlined a possible approach to the idea of a complex structure in noncommutative geometry based on the notion of positive Hochschild cocycle
on an involutive algebra (\cite{CCu}). In (\cite{Con2}, Section VI.2) Connes shows explicitly that positive Hochschild cocycles on the algebra of
smooth functions on a compact oriented $2$-dimensional manifold encode the information needed to define a holomorphic structure on the surface.
However, the corresponding problem of characterizing holomorphic structures on $n$-dimensional manifolds via positive Hochschild cocycles is still open.

Coming to concrete examples, a detail study of complex structure on noncommutative two-torus and holomorphic vector bundles on them is carried out
in (\cite{PS},\cite{P}), taking motivation from (\cite{S},\cite{DS}). Complex structure on the Podle\'s sphere is studied in (\cite{Maj}) using a
frame bundle approach, and simultaneously but independently in (\cite{HK},\cite{HK1}) using a classification of the covariant first order differential
calculi of the irreducible quantum flag manifolds. Later in (\cite{KLS}), properties of the $q$-Dolbeault complex of (\cite{Maj}) are formalized and
it was shown to resemble in many aspects the analogous structure on the classical Riemann sphere. See (\cite{KM}) for the case of higher dimensional
quantum projective spaces. A more comprehensive version of noncommutative complex structure appeared later in (\cite{BS}) and complex structure on
quantum homogeneous spaces is studied in (\cite{Bu}). The main tool used in all these examples is the Woronowicz's differential calculus for quantum
groups (\cite{Wor}). In this algebraic setting, recently, the notion of K\"ahler structure has been introduced in (\cite{Bua}) for quantum homogeneous
spaces, taking the quantum flag manifolds as motivating family of examples. However, our approach (based on \cite{FGR2}) in this article is different
from this, taking the noncommutative torus as motivating example. We discuss it now.

In noncommutative geometry, a (noncommutative) manifold is described by a triple called spectral triple. That the notion of spectral triple is the
correct noncommutative generalization of classical manifolds is shown by Connes (\cite{Con6}). However, it
turns out that the notion of spectral triple is not quite appropriate to describe the higher geometric structures, e.g. symplectic, complex, Hermitian,
K\"ahler or hyper-K\"ahler structures, even in the classical setting. Around '$98$, a decent approach to noncommutative symplectic, complex, Hermitian,
K\"ahler and hyper-K\"ahler geometry is made by Fr\"{o}hlich et al. (\cite{FGR1},\cite{FGR2}) in the context of supersymmetric quantum theory. Unlike
the case of above discussed examples, where the approach is purely algebraic, methods of Fr\"{o}hlich et al. is geometric and analytic in the sense
that spectral triple lies at the heart of it and integration theory is built-in using $\beta$-KMS state. Taking inspiration from Witten's supersymmetric
approach to the Morse inequalities (\cite{Wit1}) and the work of Jaffe et al. on connections between cyclic cohomology and supersymmetry (\cite{Jaf}),
Fr\"{o}hlich et al. obtained the supersymmetric algebraic formulation of Riemannian, spin, symplectic, complex, Hermitian, K\"{a}hler and hyper-K\"{a}hler
geometry in (\cite{FGR1}, see also $\S 3$.B in \cite{Huy} for discussion), which then readily generalizes to the noncommutative geometry framework of
spectral triples in (\cite{FGR2}).  It is important to mention here that there are well known links between supersymmetric $\,\sigma$-models and the
geometry of manifolds (\cite{AF}). The approach of Fr\"{o}hlich et al. starts with a spectral triple and detects the precise analytic conditions
required to obtain complex, Hermitian, K\"ahler or hyper-K\"ahler structure. They have denoted these various higher geometric structures by
$\,N=1,\,N=2$ and $\,N=(n,n)$ with $n=1,2,4$, along the line of supersymmetry. The relationship among these geometric structures are vaguely denoted
by $N=(4,4)\preccurlyeq N=(2,2)\preccurlyeq N=(1,1)\preccurlyeq N=1$ to mean that the former is obtained from the latter by imposing certain additional
conditions. Among these, our concern in this article are the $N=1,\,N=(1,1)$ and $N=(2,2)$ K\"{a}hler geometries. Note that the $N=1$ data is
specified by a $\varTheta$-summable even spectral triple in noncommutative geometry, and the $N=(2,2)$ data extends the notion
of Hermitian and K\"ahler manifolds to noncommutative geometry. For precise definitions see Section $(2)$ Defn. (\ref{N=1 spectral data}\,--\,\ref{N=(2,2) spectral data}).
We will call these various higher geometric structures as the $N=\bullet$ or $N=(\bullet\,,\bullet)$ spectral data in this article. In the classical
case of a spin manifold $\,\mathbb{M}$, from the $N=(1,1)$ spectral data one may recover the graded algebra of differential forms on $\mathbb{M}$
and in particular, the exterior differential.

Motivated by the case of noncommutative two-torus in (\cite{FGR2}), we prove that any $N=1$ spectral data obtained from a $C^*$-dynamical system
equipped with a faithful invariant trace (\cite{Con1},\cite{CR},\cite{Rfl3}), where the Lie group acting is even dimensional, connected and abelian,
always extends to $N=(2,2)$ K\"ahler spectral data over the same algebra, i,e. it inherits a K\"ahler structure. Moreover, there are at least
$\prod_{j=1,\,j\,odd}^{\,dim(G)}(dim(G)-j)$ different K\"ahler structures. In particular, whenever $\,\mathbb{T}^{2k}$ acts ergodically on the
algebra, it inherits a K\"ahler strcture. This produces a class of examples of noncommutative K\"ahler manifolds. As a corollary, we obtain that all
the noncommutative even dimensional tori, like their classical counterpart the complex tori, are noncommutative K\"ahler manifolds. Note that in the
noncommutative situation, noncommutative two-torus was the only known example of noncommutative K\"ahler manifold (apart from the ones recently
produced in \cite{Bua}). We also study the category of holomorphic vector bundle and show that any free module over a Hermitian spectral data is a
holomorphic vector bundle. Then we consider the particular case of noncommutative even dimensional tori and explicitly compute the associated space
of complex differential forms. This helps us to show that the category of holomorphic vector bundle over noncommutative even dimensional tori is an
abelian category, extending earlier result in (\cite{PS},\cite{P}) for the case of noncommutative two-torus. We then explain how the earlier set-up
of Polishchuk-Schwarz (\cite{PS}) for the case of noncommutative two-torus follows as a special case of our general framework for $C^*$-dynamical
systems. For a $4k$-dimensional abelian Lie group whether the K\"ahler structure obtained here extends further to a hyper-K\"ahler structure is left
as an open question. We want to mention here one crucial point that our proof relies on the fact that the associated Lie algebra $\mathfrak{g}$ of
the even dimensional Lie group $G$ acting on $\mathcal{A}$ is abelian, i,e. $\mathfrak{g}=\mathbb{R}^{2k}$. This is crucially used to construct the
nilpotent differential $d$ out of the Dirac operator $D$ (Lemma [\ref{the required relations}]). This means that the connected identity component of
$G$ is abelian and hence of the form $\,\mathbb{T}^m\times\mathbb{R}^n$. In our context, connectedness of the Lie group can be assumed and hence,
our Lie groups are essentially of the form $\,G=\mathbb{T}^m\times\mathbb{R}^n$ with $m+n=2k$ even.

Organization of the paper is as follows. In section $(2)$ we recall from (\cite{FGR2}) few essential definitions and a procedure to extend a $N=1$
spectral data to $N=(1,1)$ spectral data over the same noncommutative base space. Using this extension procedure we produce examples of noncommutative
K\"ahler manifolds coming from certain $C^*$-dynamical systems in section $(3)$. Our main theorem and two important corollaries are the following.
\begin{theorem}
Let $G$ be an even dimensional, connected, abelian Lie group and $(\mathcal{A}^\infty,G,\alpha,\tau)$ be a $C^*$-dynamical system equipped with a
faithful $G$-invariant trace $\tau$. If it determines a $\varTheta$-summable even spectral triple, then $\mathcal{A}^\infty$ inherits at least
$\,\prod_{j=1,\,j\,odd}^{\,dim(G)}(dim(G)-j)$ different K\"ahler structures.
\end{theorem}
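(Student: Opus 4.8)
The plan is to realize the $\varTheta$-summable even spectral triple concretely on an exterior-algebra module and then to refine the fibrewise structure step by step. First I would fix a basis $X_1,\dots,X_{2k}$ of $\mathfrak{g}\cong\mathbb{R}^{2k}$ (here $2k=\dim G$) and let $\delta_1,\dots,\delta_{2k}$ denote the associated $\ast$-derivations of $\mathcal{A}^\infty$; abelianness of $\mathfrak{g}$ gives $\delta_i\delta_j=\delta_j\delta_i$, while $G$-invariance of the trace $\tau$ makes each $\delta_j$ skew-symmetric for the GNS inner product on $\mathcal{H}_0:=L^2(\mathcal{A}^\infty,\tau)$. On $\mathcal{H}:=\mathcal{H}_0\otimes\Lambda^\bullet\mathbb{C}^{2k}$ I would introduce the CAR operators $a_j$ (contraction by $e_j$) and $a_j^\ast$ (exterior multiplication by $e_j$), the grading $\gamma=(-1)^N$ with number operator $N=\sum_j a_j^\ast a_j$, and the Dirac operator $D=\sum_j(i\delta_j)\otimes(a_j+a_j^\ast)$, which is self-adjoint and odd; by hypothesis the resulting even spectral triple is $\varTheta$-summable, and this is the $N=1$ datum to be extended.

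Next I would pass to the $N=(1,1)$ datum. Setting $d:=\sum_j(i\delta_j)\otimes a_j^\ast$, skew-symmetry of $\delta_j$ gives $d^\ast=\sum_j(i\delta_j)\otimes a_j$ and hence $D=d+d^\ast$, while $d^2=-\sum_{j,l}\delta_j\delta_l\otimes a_j^\ast a_l^\ast=0$ because $\delta_j\delta_l$ is symmetric and $a_j^\ast a_l^\ast$ antisymmetric in $(j,l)$ — this is exactly where abelianness of $\mathfrak{g}$ enters, and it is the nilpotency assertion recorded in the lemma on the required relations. With $N$ providing the form-degree grading and $[d,a]$ bounded for $a\in\mathcal{A}^\infty$, the extension procedure of Section 2 then yields an $N=(1,1)$ datum over the same algebra $\mathcal{A}^\infty$.

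The crux is the further extension to an $N=(2,2)$ K\"ahler datum, which I would carry out once for each perfect matching $M$ of $\{1,\dots,2k\}$; after relabelling I may take $M=\{(1,2),(3,4),\dots,(2k-1,2k)\}$, equivalently the complex structure on $\mathfrak{g}$ with $JX_{2p-1}=X_{2p}$. Put $b_p^\ast:=\tfrac{1}{\sqrt2}(a_{2p-1}^\ast+i\,a_{2p}^\ast)$ and $\bar b_p^\ast:=\tfrac{1}{\sqrt2}(a_{2p-1}^\ast-i\,a_{2p}^\ast)$ for $1\le p\le k$; these and their adjoints again satisfy the CAR relations, and all of $b_1^\ast,\dots,b_k^\ast,\bar b_1^\ast,\dots,\bar b_k^\ast$ mutually anticommute. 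Regrouping $d$ according to the pairs of $M$ gives $d=\partial+\bar\partial$ with $\partial=\sum_p\tfrac{i}{\sqrt2}(\delta_{2p-1}-i\delta_{2p})\otimes b_p^\ast$ and $\bar\partial=\sum_p\tfrac{i}{\sqrt2}(\delta_{2p-1}+i\delta_{2p})\otimes\bar b_p^\ast$, and the same symmetric-versus-antisymmetric cancellation yields $\partial^2=\bar\partial^2=0$ and $\partial\bar\partial+\bar\partial\partial=0$. Equipping $\Lambda^\bullet\mathbb{C}^{2k}$ with the $(p,q)$-bigrading and the Lefschetz $\mathfrak{sl}_2$-triple adapted to $M$, one then checks the remaining K\"ahler identities. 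I expect this last verification to be the main obstacle in practice: one has to match the conventions of the $N=(2,2)$ definition of Section 2 precisely and keep track of a large number of signs. The conceptual reason it goes through is that all coefficient operators lie in the commutative algebra generated by $\delta_1,\dots,\delta_{2k}$ while everything fibrewise is the fixed CAR algebra on the complex vector space $\Lambda^\bullet\mathbb{C}^{2k}$, so the K\"ahler identities reduce to the standard flat-model identities, which hold for every complex structure $J$.

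Finally I would count. Distinct matchings $M\neq M'$ give distinct complex structures $J_M\neq J_{M'}$ on $\mathfrak{g}$, hence distinct bigradings of the associated complex differential forms and distinct Dolbeault operators $\partial_M\neq\partial_{M'}$, so the resulting K\"ahler structures on $\mathcal{A}^\infty$ are genuinely different; and since neither $\mathcal{H}$ nor $D$ is altered in the construction, $\varTheta$-summability is preserved throughout. As the number of perfect matchings of a $2k$-element set equals $(2k-1)!!=(2k-1)(2k-3)\cdots3\cdot1=\prod_{j=1,\,j\,odd}^{\,2k}(2k-j)=\prod_{j=1,\,j\,odd}^{\,dim(G)}(dim(G)-j)$, this produces at least the asserted number of K\"ahler structures. (Taking $\mathcal{A}^\infty=C^\infty(\mathbb{T}^{2k})$ with an ergodic $\mathbb{T}^{2k}$-action recovers that complex tori are K\"ahler, and $2k=2$ recovers the noncommutative two-torus treated in \cite{FGR2}.)
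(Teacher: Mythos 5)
Your proposal is correct in substance but packages the construction quite differently from the paper. The paper never writes the exterior algebra explicitly: it runs the Fr\"ohlich--Grandjean--Recknagel extension procedure, using the real structure $J$ and the Grassmannian connection on the free module $\mathcal{E}=\mathcal{A}\otimes\mathbb{C}^N$ to build $\widetilde{\mathcal{H}}=\overline{\mathcal{E}\otimes_\mathcal{A}\mathcal{E}}\cong L^2(\mathcal{A},\tau)\otimes\mathbb{C}^N\otimes\mathbb{C}^N$, identifies $\mathfrak{D}=1\otimes D$ and $\overline{\mathfrak{D}}=-\varepsilon'D\otimes\sigma$, sets $d=\tfrac12(\mathfrak{D}-i\overline{\mathfrak{D}})$, and then obtains the complex structure by conjugation: a skew-adjoint $\mathcal{I}=1\otimes\widetilde{\mathcal{I}}$ built from Clifford bivectors $1\otimes\gamma_\ell\gamma_j+\gamma_\ell\gamma_j\otimes1$ summed over a perfect matching, with $d_2=[\mathcal{I},d]$, $\partial=\tfrac12(d-id_2)$, $\overline{\partial}=\tfrac12(d+id_2)$, $T=\tfrac12(\mathcal{T}-i\mathcal{I})$, $\overline{T}=\tfrac12(\mathcal{T}+i\mathcal{I})$. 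Since $\mathbb{C}^N\otimes\mathbb{C}^N\cong\Lambda^\bullet\mathbb{C}^{2k}$ as a module over the two commuting Clifford actions, your CAR picture is unitarily equivalent to this, and your matching-indexed splitting $d=\partial+\overline{\partial}$ via the operators $b_p^\ast,\bar b_p^\ast$ is exactly the paper's $\mathcal{I}$-decomposition in disguise. Your route is more transparent (the nilpotency and K\"ahler identities reduce visibly to the flat CAR model), while the paper's route has the advantage of exhibiting the $N=(1,1)$ data as an honest output of the Section~2 extension procedure, which is part of what "extends" is supposed to mean in this framework. The counting by perfect matchings, $(2k-1)!!=\prod_{j\,odd}(2k-j)$, agrees with the paper's Proposition on the number of K\"ahler structures.

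Three points need to be tightened before this is a complete proof. First, the hypothesis of the theorem is that the specific candidate $D=\sum_j\partial_j\otimes\gamma_j$ on $L^2(\mathcal{A},\tau)\otimes\mathbb{C}^N$ is an honest $\varTheta$-summable even spectral triple; your Dirac operator lives on $L^2(\mathcal{A},\tau)\otimes\Lambda^\bullet\mathbb{C}^{2k}$, so you must observe (as the paper does via $\mathrm{Tr}(e^{-\varepsilon\mathfrak{D}^2})=N\,\mathrm{Tr}(e^{-\varepsilon D^2})$ and $\mathfrak{D}=1_N\otimes D$) that self-adjointness and $\varTheta$-summability transfer to the $N$-fold amplification. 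Second, the $N=(1,1)$ and $N=(2,2)$ definitions require more than the identities you defer: you must produce the Hodge operator $\star$ with $\star d=-d^\ast\star$ and $\star\partial=-\overline{\partial}^{\,\ast}\star$, and the bounded operators $T,\overline{T}$ with $[T,\partial]=\partial$, $[T,\overline{\partial}]=0$, etc.; in your picture these are $\star=(-1)^{N_{\mathrm{odd}}}$-type fibrewise involutions and the holomorphic/antiholomorphic number operators $T=\sum_p b_p^\ast b_p$, $\overline{T}=\sum_p\bar b_p^\ast\bar b_p$, and the verification is routine but is precisely the content of the paper's Lemma on $\mathcal{T}$ and Propositions on the Hermitian and K\"ahler conditions, so it should not be waved at. Third, the claim that distinct matchings give genuinely different K\"ahler structures deserves at least a sentence (the paper likewise only asserts it as a "purely algebraic verification"); the cleanest argument is that the operators $\partial_M$ have visibly different coefficient derivations $\delta_{2p-1}\mp i\delta_{2p}$ attached to distinct creation operators.
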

\begin{corollary}
If $(\mathcal{A}^\infty,\mathbb{T}^{2k},\alpha)$ is a $C^*$-dynamical system such that the action of $\,\mathbb{T}^{2k}$ is ergodic, then
$\mathcal{A}^\infty$ inherits at least $\,\prod_{j=1,\,j\,odd}^{\,2k}(2k-j)$ different K\"ahler structures. 
\end{corollary}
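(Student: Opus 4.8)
The plan is to construct the $N=(2,2)$ K\"ahler spectral data explicitly from the infinitesimal generators of the $G$-action, the essential structural input being that $\mathfrak g:=\mathrm{Lie}(G)\cong\mathbb R^{2k}$ is abelian. First I would fix a basis of $\mathfrak g$ and let $\delta_1,\dots,\delta_{2k}$ be the associated densely defined $\ast$-derivations of $\mathcal A^\infty$; abelianness of $\mathfrak g$ gives $[\delta_i,\delta_j]=0$, while $G$-invariance and faithfulness of $\tau$ make the $\delta_j$ skew-symmetric on the GNS space $\mathcal H_0=L^2(\mathcal A^\infty,\tau)$. The given $\varTheta$-summable even spectral triple is then the standard Connes-type datum on $\mathcal H_0\otimes\mathbb C^{2^k}$ with $D=\sum_j\delta_j\otimes\gamma^j$ and chirality grading, i.e.\ our $N=1$ data, and I would promote it, over the same algebra, first to $N=(1,1)$ and then to $N=(2,2)$ data by the extension procedure recalled in Section~2.

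To introduce a complex structure I would choose a perfect matching of $\{1,\dots,2k\}$; after relabeling, say it pairs $(2\ell-1,2\ell)$, and set $\partial_\ell=\tfrac12(\delta_{2\ell-1}-\sqrt{-1}\,\delta_{2\ell})$, $\bar\partial_\ell=\tfrac12(\delta_{2\ell-1}+\sqrt{-1}\,\delta_{2\ell})$ for $1\le\ell\le k$. This is a linear complex structure $J$ on $\mathfrak g$, hence a bigrading $\Omega^{\bullet,\bullet}=\mathcal A^\infty\otimes\Lambda^\bullet(\mathfrak g^{1,0})^{*}\otimes\Lambda^\bullet(\mathfrak g^{0,1})^{*}$ of the module of forms. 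Putting $\partial=\sum_\ell\varepsilon_\ell\,\partial_\ell$, $\bar\partial=\sum_\ell\bar\varepsilon_\ell\,\bar\partial_\ell$ (exterior multiplication by the holomorphic, resp.\ antiholomorphic, basis covectors) and $d=\partial+\bar\partial$, the key computation---the lemma alluded to in the introduction---is $\partial^{2}=\bar\partial^{2}=\partial\bar\partial+\bar\partial\partial=0$, hence $d^{\,2}=0$: this rests precisely on the commutation of the $\partial_\ell,\bar\partial_\ell$, i.e.\ on $\mathfrak g$ being abelian, so there is no curvature obstruction. Taking $\tau$-adjoints and identifying $\mathbb C^{2^k}$ with the Dolbeault module $\Lambda^{0,\bullet}$ attached to $J$ recovers $D=d+d^{*}$ up to grading-preserving unitary equivalence, which gives the $N=(1,1)$ data (the de Rham package with $d,d^{*}$ and the form-degree grading). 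Adjoining the holomorphic and antiholomorphic number operators, the operators $\partial,\bar\partial$, and the Lefschetz $\mathfrak{sl}_2$-triple of the constant K\"ahler form of $(\mathfrak g,J)$ then produces the remaining $N=(2,2)$ relations; the K\"ahler condition $\Delta_\partial=\Delta_{\bar\partial}=\tfrac12\Delta_d$ is automatic because the metric on $\mathfrak g$ is the flat Euclidean one and $J$ is orthogonal for it.

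Finally I would count: different perfect matchings of $\{1,\dots,2k\}$ yield different complex structures $J$, distinguishable for instance by which pairs of derivations are combined into the $\partial_\ell$, hence pairwise distinct K\"ahler structures on $\mathcal A^\infty$, and the number of such matchings is exactly $\prod_{j=1,\,j\,\mathrm{odd}}^{\,2k}(2k-j)$; the ``at least'' leaves room for further, non-monomial, complex structures. The corollary follows by specializing to $G=\mathbb T^{2k}$: an ergodic action of $\mathbb T^{2k}$ admits a unique invariant state, which is automatically a faithful trace, and the associated spectral triple is $\varTheta$-summable since the Dirac spectrum is indexed, with uniformly bounded multiplicities, by the dual lattice $\mathbb Z^{2k}$, exactly as for the ordinary $2k$-torus, so the theorem applies. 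The main obstacle is the analytic bookkeeping in the second step: checking, line by line against the axiomatics of Section~2, all the operator identities of $N=(2,2)$ K\"ahler data---in particular the exact matching of $d+d^{*}$ together with all the grading operators with the given Dirac datum, and the K\"ahler identities---while controlling domains and $\varTheta$-summability for the unbounded $D$ and the densely defined $\delta_j$. Conceptually everything reduces to linear algebra on $(\mathfrak g,J,\text{Euclidean metric})$ tensored with $\mathcal A^\infty$, the only substantive ingredient being $[\delta_i,\delta_j]=0$; the labour lies in confirming that this picture genuinely satisfies the definitions and that the structures produced are genuinely inequivalent.
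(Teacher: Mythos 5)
Your handling of the corollary itself is exactly the paper's: compactness plus ergodicity of $\mathbb{T}^{2k}$ forces the unique invariant state to be a faithful trace (the paper cites Hoegh-Krohn--Landstad--Stormer), the spectral subspace decomposition over $\mathbb{Z}^{2k}$ with multiplicities at most one gives a $2k$-summable, hence $\varTheta$-summable, even spectral triple (the paper quotes Thm.~5.4 of Gabriel--Grensing rather than arguing this directly), and one then invokes Thm.~(\ref{final theorem}). Where you genuinely diverge is in how you would establish that theorem. You build the $N=(2,2)$ data directly as a Dolbeault-type complex $\mathcal{A}^\infty\otimes\Lambda^{\bullet,\bullet}(\mathfrak{g}_{\mathbb{C}}^{*})$ attached to a constant complex structure on $\mathfrak{g}\cong\mathbb{R}^{2k}$, with $\partial,\overline{\partial}$ given by paired derivations and exterior multiplication; the paper instead runs the Fr\"ohlich--Grandjean--Recknagel extension procedure through the Grassmannian connection on $\mathcal{E}=\mathcal{A}\otimes\mathbb{C}^{N}$, obtains $\mathfrak{D}=1_N\otimes D$ and $\overline{\mathfrak{D}}=-\varepsilon^\prime D\otimes\sigma$ on $L^2(\mathcal{A},\tau)\otimes\mathbb{C}^{N^2}$, and splits $d=\frac12(\mathfrak{D}-i\overline{\mathfrak{D}})$ into $\partial+\overline{\partial}$ via the commutator with an explicit anti-selfadjoint Clifford element $\mathcal{I}=1\otimes\widetilde{\mathcal{I}}$, the operators $T,\overline{T}$ coming from $\mathcal{T}$ and $\mathcal{I}$. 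The two pictures coincide in substance: your fiber $\Lambda^{\bullet,\bullet}$ is the paper's $\mathbb{C}^N\otimes\mathbb{C}^N$, your perfect matchings of $\{1,\ldots,2k\}$ are exactly the paper's admissible sums of the $\mathcal{I}_{(\ell,j)}$, and both yield the same count $(2k-1)!!=\prod_{j=1,\,j\,odd}^{\,2k}(2k-j)$. What your route buys is classical transparency (everything is visibly flat linear algebra over $(\mathfrak{g},J)$ tensored with $\mathcal{A}^\infty$); what it costs is that every $N=(2,2)$ axiom, the identification of $d+d^{*}$ with $1_N\otimes D$, and the pairwise distinctness of the structures must be checked by hand, whereas the paper reduces all of this to a few one-time identities such as $[\,\mathcal{I},[\,\mathcal{I},d\,]\,]=-d$ and $\{d,d_2^{*}\}=\{d^{*},d_2\}=0$. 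I see no gap, only deferred verification that you yourself flag.
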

\begin{corollary}
For $n$ even, the noncommutative $n$-torus $\mathcal{A}_\Theta$ satisfies the $N=(2,2)$ K\"ahler spectral data, i,e. they are noncommutative
K\"ahler manifolds. Moreover, there are at least $\prod_{j=1,\,j\,odd}^{\,n}(n-j)$ different K\"ahler structures on $\mathcal{A}_\Theta$.
\end{corollary}
In section $(4)$ we study the category of holomorphic vector bundle and show that any free module over a Hermitian spectral data is a holomorphic
vector bundle. Then we consider the particular case of noncommutative even dimensional tori, explicitly compute the space of complex differential
forms and obtain the following result.
\begin{theorem}
The category $\mathcal{H}o\ell(\mathcal{A}_\Theta)$ of holomorphic vector bundle over noncommutative even dimensional torus $\mathcal{A}_\Theta$ is
an abelian category.
\end{theorem}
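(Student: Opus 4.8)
The plan is to verify the axioms of an abelian category by reducing everything, through the forgetful functor to right $\mathcal{A}_\Theta$-modules, to module theory, the only non-formal input being that each space of $(0,q)$-forms $\Omega^{0,q}$ of $\mathcal{A}_\Theta$ is a \emph{free} $\mathcal{A}_\Theta$-module of finite rank, as computed above. Recall that an object of $\mathcal{H}o\ell(\mathcal{A}_\Theta)$ consists of a right $\mathcal{A}_\Theta$-module $E$ together with a $\bar\partial$-connection $\bar\nabla_E : E\to E\otimes_{\mathcal{A}_\Theta}\Omega^{0,1}$ satisfying the Leibniz rule over the differential graded algebra $(\Omega^{0,\bullet},\bar\partial)$ and the integrability condition $\bar\nabla_E^{(1)}\circ\bar\nabla_E=0$ for the canonical extensions $\bar\nabla_E^{(q)} : E\otimes\Omega^{0,q}\to E\otimes\Omega^{0,q+1}$; a morphism $\phi : (E,\bar\nabla_E)\to(F,\bar\nabla_F)$ is an $\mathcal{A}_\Theta$-linear map with $(\phi\otimes\mathrm{id})\circ\bar\nabla_E=\bar\nabla_F\circ\phi$, equivalently $\phi\in\ker\bar\nabla$ for the induced connection on $\mathrm{Hom}_{\mathcal{A}_\Theta}(E,F)$.

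First I would record that $\mathcal{H}o\ell(\mathcal{A}_\Theta)$ is additive: the zero module with its trivial connection is a zero object, $E\oplus F$ equipped with $\bar\nabla_E\oplus\bar\nabla_F$ is again an integrable $\bar\partial$-connection and provides a biproduct, the hom-sets are abelian groups, and composition is bilinear. Next I would construct kernels and cokernels. Fix a morphism $\phi$ and put $K=\ker\phi\subseteq E$. Since $\Omega^{0,1}$ is a free $\mathcal{A}_\Theta$-module of finite rank, the functor $-\otimes_{\mathcal{A}_\Theta}\Omega^{0,1}$ is a finite direct sum of copies of the identity, hence exact, so $K\otimes\Omega^{0,1}=\ker(\phi\otimes\mathrm{id})$ inside $E\otimes\Omega^{0,1}$; the intertwining relation then forces $\bar\nabla_E(e)\in K\otimes\Omega^{0,1}$ for every $e\in K$, so $\bar\nabla_E$ restricts to a $\bar\partial$-connection $\bar\nabla_K$ on $K$. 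Because $\bar\nabla_E(e)\in K\otimes\Omega^{0,1}$ and $\bar\partial$ preserves $\Omega^{0,\bullet}$, the extensions $\bar\nabla_E^{(q)}$ carry $K\otimes\Omega^{0,q}$ into $K\otimes\Omega^{0,q+1}$, so $(K\otimes\Omega^{0,\bullet},\bar\nabla_K^{(\bullet)})$ is a subcomplex of $(E\otimes\Omega^{0,\bullet},\bar\nabla_E^{(\bullet)})$ and integrability is inherited; one then checks directly that $(K,\bar\nabla_K)$ together with the inclusion is a categorical kernel of $\phi$. Dually, on $C=F/\phi(E)$ the relation $\bar\nabla_F(\phi(e))=(\phi\otimes\mathrm{id})\bar\nabla_E(e)\in\phi(E)\otimes\Omega^{0,1}$ shows that $\bar\nabla_F$ descends, the induced complex is a quotient complex and hence integrable, and $(C,\bar\nabla_C)$ with the projection is a categorical cokernel.

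It remains to see that every monomorphism is the kernel of its cokernel and every epimorphism the cokernel of its kernel. The forgetful functor to right $\mathcal{A}_\Theta$-modules is faithful and, by the construction just given, preserves kernels and cokernels; therefore a monomorphism in $\mathcal{H}o\ell(\mathcal{A}_\Theta)$ has vanishing kernel, hence is injective on underlying modules, and the canonical comparison map $E\to\ker(F\to\mathrm{coker}\,\phi)$ is an isomorphism of $\mathcal{A}_\Theta$-modules which intertwines the connections because the holomorphic structure on $\ker(F\to\mathrm{coker}\,\phi)$ is precisely the restricted one constructed above; the dual argument handles epimorphisms. The step I expect to be the real obstacle is controlling the canonical extensions $\bar\nabla^{(\bullet)}$ to the higher $(0,q)$-forms and checking their compatibility with restriction to $K$ and passage to the quotient $C$ — this is exactly where the explicit computation of the complex differential forms of $\mathcal{A}_\Theta$, and in particular the finiteness and freeness of each $\Omega^{0,q}$, is indispensable (and if one additionally demands the underlying modules to satisfy a finiteness/coherence condition, one must further verify that $K$ and $C$ inherit it, which again rests on the structure of $\mathcal{A}_\Theta$). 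For $n=2$ the integrability condition is vacuous, and the argument specializes to the module-theoretic statement underlying the Polishchuk-Schwarz picture.
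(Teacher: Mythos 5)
Your argument is essentially the paper's: both hinge on the freeness of $\Omega^{0,1}_{\partial,\overline{\partial}}(\mathcal{A}_\Theta)$ (and of the higher $\Omega^{0,q}_{\partial,\overline{\partial}}(\mathcal{A}_\Theta)$), so that $-\otimes_{\mathcal{A}_\Theta}\Omega^{0,1}_{\partial,\overline{\partial}}(\mathcal{A}_\Theta)$ is exact and the connection of a morphism restricts to its kernel and descends to its cokernel, the paper (following Beggs--Smith, Propn.\ 4.5) leaving the remaining axioms as ``easy to check'' where you spell them out. The finiteness/projectivity caveat you flag for $K$ and $C$ is a genuine subtlety, but it is likewise left untreated in the paper's own proof.
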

At the end we explain how the earlier set-up of Polishchuk-Schwarz (\cite{PS}) for the holomorphic structure on noncommutative two-torus follows as
a special case of our general framework. We conclude this article by mentioning few important open questions.
\bigskip


\section {Preliminaries}

All algebras considered in this article will be assumed unital.

\begin{definition}\label{spectral triple}
A triple $(\mathcal{A},\mathcal{H},D)$ is called a spectral triple if
\begin{enumerate}
\item $\mathcal{A}$ is a unital associative $*$-algebra represented faithfully on the separable Hilbert space $\mathcal{H}$ by bounded operators;
\item $D$ is an unbounded self-adjoint operator acting on $\mathcal{H}$ such that for each $a\in\mathcal{A}$
\begin{enumerate}
\item the commutator $[D,a]$ extends uniquely to a bounded operator on $\mathcal{H}$,
\item $D$ has compact resolvent.
\end{enumerate}
\end{enumerate}
\end{definition}

If there is a $\mathbb{Z}_2$-grading operator on $\mathcal{H}$ such that $[\gamma,a]=0$ for all $a\in\mathcal{A}$ and $\{\gamma,D\}=0$ then the
spectral triple is called {\it even}, and otherwise {\it odd}. Note that $D$ has compact resolvent is equivalent to saying that $exp(-\varepsilon
D^2)$ is a compact operator for all $\varepsilon>0$. If $|D|^{-p}$ is in the Dixmier ideal $\mathcal{L}^{(1,\infty)}(\mathcal{H})$ then the spectral
triple is called $p$-summable.

\begin{definition}\label{N=1 spectral data}
A quadruple $(\mathcal{A},\mathcal{H},D,\gamma)$ is called a set of $N=1$ spectral data if
\begin{enumerate}
\item $\mathcal{A}$ is a unital associative $*$-algebra represented faithfully on the separable Hilbert space $\mathcal{H}$ by bounded operators;
\item $D$ is an unbounded self-adjoint operator acting on $\mathcal{H}$ such that for each $a\in\mathcal{A}$
\begin{enumerate}
\item the commutator $[D,a]$ extends uniquely to a bounded operator on $\mathcal{H}$,
\item the operator $exp(-\varepsilon D^2)$ is trace class for all $\varepsilon>0$;
\end{enumerate}
\item $\gamma$ is a $\mathbb{Z}_2$-grading on $\mathcal{H}$ such that $[\gamma,a]=0$ for all $a\in\mathcal{A}$ and $\{\gamma,D\}=0$.
\end{enumerate}
\end{definition}

\begin{remark}\rm
Observe that the $N=1$ spectral data represents a $\varTheta$-summable even spectral triple in noncommutative geometry (\cite{Con7}). In particular,
any $p$-summable even spectral triple is a $N=1$ spectral data. The associated space of differential forms, called the Dirac dga, extends the
classical de-Rham dga on manifolds to noncommutative framework (\cite{Con3}).
\end{remark}

\begin{definition}\label{N=(1,1) spectral data}
A quintuple $(\mathcal{A},\mathcal{H},d,\gamma,\star)$ is called a set of $N=(1,1)$ spectral data if
\begin{enumerate}
\item $\mathcal{A}$ is a unital associative $*$-algebra represented faithfully on the separable Hilbert space $\mathcal{H}$ by bounded operators;
\item $d$ is a densely defined closed operator on $\mathcal{H}$ such that
\begin{enumerate}
\item $d^2=0$,
\item the commutator $[d,a]$ extends uniquely to a bounded operator on $\mathcal{H}$ for each $a\in\mathcal{A}$,
\item the operator $exp(-\varepsilon\bigtriangleup)$, with $\bigtriangleup=dd^*+d^*d$, is trace class for all $\varepsilon>0$;
\end{enumerate}
\item $\gamma$ is a $\mathbb{Z}_2$-grading on $\mathcal{H}$ such that $[\gamma,a]=0$ for all $a\in\mathcal{A}$ and $\{\gamma,d\}=0$;
\item $\star$ is a unitary operator acting on $\mathcal{H}$ such that
\begin{enumerate}
\item $\star d=\zeta d^*\star\,$ for some phase $\,\zeta\in \mathbb{S}^1\subseteq \mathbb{C}$,
\item $[\star,a]=0$ for all $a\in \mathcal{A}$.
\end{enumerate}
\end{enumerate}
\end{definition}

\begin{remark}\rm
\begin{enumerate}
\item In analogy with the classical case, the operator $\star$ is called the Hodge operator.
\item As is always achievable in the classical case of manifolds, the Hodge operator can be taken to satisfy $\,\star^2=1$ and $[\star,\gamma]=0$,
and the phase $\,\zeta=-1$ (see discussion in Page $139$ in \cite{FGR2}).
\item The associated space of $N=(1,1)$ differential forms is given in (Section $[2.2.2]$ in \cite{FGR2}) and the notion of integration is described
in (Section $[2.2.3]$ in \cite{FGR2}).
\end{enumerate}
\end{remark}

\begin{definition}\label{N=(2,2) spectral data}
An octuple $(\mathcal{A},\mathcal{H},\partial,\overline{\partial},T,\overline{T},\gamma,\star)$ is called a set of $N=(2,2)$ K\"ahler spectral data if
\begin{enumerate}
\item the quintuple $(\mathcal{A},\mathcal{H},\partial+\overline{\partial},\gamma,\star)$ forms a set of $N=(1,1)$ spectral data;
\item $T,\,\overline{T}$ are bounded self-adjoint operators on $\mathcal{H}$, and $\,\partial,\overline{\partial}$ are densely defined closed operators
on $\mathcal{H}$ such that the following relations hold $:$
\begin{center}$
\begin{array}{lcl}
(a)\,\,\,\partial^2=\overline{\partial}^{\,2}=0\quad,\quad (b)\,\,\,\{\partial,\overline{\partial}\}=0\quad,\quad (c)\,\,\,[T,\overline{T}\,]=0\quad\,,\\
(d)\,\,\,[T,\partial]=\partial\quad\quad\,,\quad (e)\,\,\,[T,\overline{\partial}\,]=0\quad,\quad (f)\,\,\,[\,\overline{T},\partial]=0\quad\,,\quad (g)\,\,\,
[\,\overline{T},\overline{\partial}\,]=\overline{\partial}\,\,;
\end{array}$
\end{center}
\item $[T,a]=[\,\overline{T},a]=0\,\,\forall\,a\in\mathcal{A}$, and $\,[\partial,a],\,[\,\overline{\partial},a],\,\{\partial,[\,\overline{\partial},
a]\}$ extends uniquely to bounded operators on $\mathcal{H};$
\item the $\mathbb{Z}_2$-grading operator $\gamma$ satisfy
\begin{enumerate}
\item $\{\gamma,\partial\}=\{\gamma,\overline{\partial}\}=0$,
\item $[\gamma,T]=[\gamma,\overline{T}\,]=0;$
\end{enumerate}
\item for some phase $\,\zeta\in\mathbb{S}^1$, the Hodge operator $\star\in\mathcal{U}(\mathcal{H})$ satisfy
\begin{enumerate}
\item $\star\,\partial=\zeta\overline{\partial}^{\,*}\star\,$,
\item $\star\,\overline{\partial}=\zeta\partial^*\star\,;$
\end{enumerate}
\item the following K\"ahler conditions are satisfied
\begin{enumerate}
\item $\{\partial,\overline{\partial}^{\,*}\}=\{\,\overline{\partial},\partial^*\}=0$,
\item $\{\partial,\partial^*\}=\{\,\overline{\partial},\overline{\partial}^{\,*}\}$. 
\end{enumerate}
\end{enumerate}
\end{definition}

\begin{remark}\rm
\begin{enumerate}
\item Note that the first line in condition $(6)$ above is a consequence of the second line in classical complex geometry, but has to be imposed as
a separate condition in the noncommutative setting (\cite{FGR2}). This says that the Laplacian $\bigtriangleup=2\bigtriangleup_{\overline{\partial}}$
like in the classical case of K\"ahler manifolds.
\item An octuple $(\mathcal{A},\mathcal{H},\partial,\overline{\partial},T,\overline{T},\gamma,\star)$ satisfying conditions $(1-5)$ above is called a
{\it Hermitian spectral data} generalizing the notion of Hermitian manifolds. Condition $(6)$ is precisely the K\"ahler condition on a noncommutative
Hermitian manifold.
\item The associated space of complex differential forms is given in (Section $[2.3.2]$ in \cite{FGR2}, in particular see Propn. $[2.32]$) and the
notion of integration is described in (Section $[2.3.3]$ in \cite{FGR2}).
\end{enumerate}
\end{remark}

Defn. (\ref{N=(1,1) spectral data}) of $N=(1,1)$ spectral data has an alternative description. One can introduce two unbounded operators
\begin{center}
$\mathfrak{D}=d+d^*\quad,\quad\overline{\mathfrak{D}}=i(d-d^*)$
\end{center}
(Caution: $\overline{\mathfrak{D}}$ is not the closure of $\mathfrak{D}$) which satisfy the relations
\begin{center}
$\mathfrak{D}^2=\overline{\mathfrak{D}}^2\quad,\quad\{\mathfrak{D},\overline{\mathfrak{D}}\}=0$
\end{center}
making the notion of $N=(1,1)$ spectral data an immediate generalization of a classical $N=(1,1)$ Dirac bundle (\cite{FGR1},\cite{FGR2}). Conversely,
starting with $\,\mathfrak{D},\,\overline{\mathfrak{D}}$ satisfying the above relations, one can define
\begin{center}
$d=\frac{1}{2}(\mathfrak{D}-i\overline{\mathfrak{D}}\,)\quad,\quad d^*=\frac{1}{2}(\mathfrak{D}+i\overline{\mathfrak{D}}\,)\,.$
\end{center}
For all $\,\varepsilon>0$, the condition $\,exp(-\varepsilon(dd^*+d^*d))$ is a trace class operator is equivalent with $\,exp(-\varepsilon
\mathfrak{D}^2)$ is a trace class operator.

\begin{lemma}\label{a crucial relation involving Hodge}
If the Hodge operator satisfy $\,\star^2=1$ and $[\star,\gamma]=0$, and the phase $\,\zeta=-1$, then 
\begin{enumerate}
\item $\{\gamma,d\}=0$ if and only if $\,\{\gamma,\mathfrak{D}\}=\{\gamma,\overline{\mathfrak{D}}\}=0\,;$
\item $\star d=-d^*\star\,$ if and only if $\,\{\star,\mathfrak{D}\}=[\star,\overline{\mathfrak{D}}\,]=0\,$.
\end{enumerate}
\end{lemma}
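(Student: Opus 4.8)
The plan is to prove both equivalences by a direct computation, expanding $\mathfrak{D}=d+d^*$ and $\overline{\mathfrak{D}}=i(d-d^*)$ and, in the reverse directions, using the inverse formulas $d=\tfrac12(\mathfrak{D}-i\overline{\mathfrak{D}})$ and $d^*=\tfrac12(\mathfrak{D}+i\overline{\mathfrak{D}})$ recorded just above the statement. The two facts that make everything go through are that $\gamma$ is self-adjoint and that $\star$, being unitary with $\star^2=1$, is also self-adjoint; this lets me take adjoints of the defining relations at will. Since $\gamma$ and $\star$ are bounded while $d$ is closed (so $d^{**}=d$), there is no domain subtlety in forming the products involved or in the identities $(\star d)^*=d^*\star$ and $(d^*\star)^*=\star d$; I would note this once at the outset and then manipulate formally.

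For part $(1)$: if $\{\gamma,d\}=0$, taking adjoints gives $\{\gamma,d^*\}=0$, whence $\{\gamma,\mathfrak{D}\}=\{\gamma,d\}+\{\gamma,d^*\}=0$ and $\{\gamma,\overline{\mathfrak{D}}\}=i\big(\{\gamma,d\}-\{\gamma,d^*\}\big)=0$. Conversely, from $d=\tfrac12(\mathfrak{D}-i\overline{\mathfrak{D}})$ one gets $\{\gamma,d\}=\tfrac12\big(\{\gamma,\mathfrak{D}\}-i\{\gamma,\overline{\mathfrak{D}}\}\big)$, so the two anticommutators vanishing forces $\{\gamma,d\}=0$. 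For part $(2)$: assume $\star d=-d^*\star$. Conjugating by $\star$ (using $\star^2=1$) yields $\star d^*=-d\star$, and then $\{\star,\mathfrak{D}\}=(\star d+d^*\star)+(\star d^*+d\star)=0$ and $[\star,\overline{\mathfrak{D}}]=i\big((\star d+d^*\star)-(\star d^*+d\star)\big)=0$. Conversely, substituting $d,d^*$ in terms of $\mathfrak{D},\overline{\mathfrak{D}}$ gives the clean identity $\star d+d^*\star=\tfrac12\{\star,\mathfrak{D}\}-\tfrac{i}{2}[\star,\overline{\mathfrak{D}}]$, so the vanishing of $\{\star,\mathfrak{D}\}$ and $[\star,\overline{\mathfrak{D}}]$ returns $\star d=-d^*\star$.

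There is no real obstacle here: the content is entirely the bookkeeping of $\pm$ signs and factors of $i$, organized around the pairing of the relation $\star d=-d^*\star$ with its $\star$-conjugate $\star d^*=-d\star$. The one point worth stating carefully rather than glossing over is the legitimacy of taking adjoints and of the identities such as $(\star d)^*=d^*\star$ for the unbounded closed operator $d$ composed with the bounded self-adjoint unitary $\star$ (and likewise with $\gamma$); once that is granted, the rest is a short formal calculation.
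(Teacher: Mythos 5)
Your computation is correct and is precisely the direct verification the paper has in mind (its own proof is just ``Easy verification''): expanding $\mathfrak{D}=d+d^*$, $\overline{\mathfrak{D}}=i(d-d^*)$ and their inverses, pairing each relation with its adjoint/$\star$-conjugate, and tracking signs. Nothing further is needed.
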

\begin{proof}
Easy verification.
\end{proof}

Any $N=(1,1)$ spectral data gives rise to a $N=1$ spectral data over the same algebra by taking $D=d+d^*$. The converse, i,e. whether a $N=1$
spectral data can be extended to $N=(1,1)$ spectral data, is true for the classical case of manifolds (\cite{FGR1}). However, in the noncommutative
situation this is not obvious. Guided by the classical case of manifolds a procedure of extension is suggested by Fr\"{o}hlich et al. in (\cite{FGR2}),
which we discuss now.
\medskip

Let $\,\mathcal{E}$ be a finitely generated projective left module over $\mathcal{A}$ and $\,\mathcal{E}^*:=\mathcal{H}om_\mathcal{A}(\mathcal{E},
\mathcal{A})$. Clearly, $\,\mathcal{E}^*$ is also a left $\mathcal{A}$-module by the rule $\,(a\,.\,\phi)(\xi):=\phi(\xi)a^*,\,\forall\,\xi\in
\mathcal{E}$. Throughout the article, we will always write f.g.p. for notational brevity to mean finitely generated projective. In the noncommutative
situation f.g.p. module represents vector bundle over noncommutative spaces.

\begin{definition}\label{def}
A Hermitian structure on $\,\mathcal{E}$ is an $\mathcal{A}$-valued positive-definite map $\langle\,\,,\,\rangle_{\mathcal{A}}$ such that $\,:$
\begin{enumerate}
\item [(a)] $\langle\xi,\xi'\rangle_{\mathcal{A}}^*=\langle\xi',\xi\rangle_{\mathcal{A}}\,,\quad\forall\,\xi,\xi'\in\mathcal{E}$.
\item [(b)] $\langle a\,.\,\xi,b\,.\,\xi'\rangle_{\mathcal{A}}=a(\langle\xi,\xi'\rangle_{\mathcal{A}})b^*\,,\quad\forall\,\xi,\xi'\in\mathcal{E},\,\,
\forall\,a,b\in\mathcal{A}$.
\item [(c)] The map $\,g:\xi\longmapsto\Phi_\xi$ from $\,\mathcal{E}$ to $\,\mathcal{E}^*$, given by $\,\Phi_\xi(\eta)=\langle\eta,\xi\rangle_\mathcal{A}
\,,\,\forall\,\eta\in\mathcal{E}$, gives a conjugate linear left $\mathcal{A}$-module isomorphism between $\,\mathcal{E}$ and $\,\mathcal{E}^*$, i,e.
$g$ can be regarded as a metric on $\,\mathcal{E}$. This property is referred as the self-duality of $\,\mathcal{E}$.
\end{enumerate}
\end{definition}

Any free $\mathcal{A}$-module $\,\mathcal{E}_0=\mathcal{A}^n$ has a Hermitian structure on it, given by $\langle\,\xi,\eta\,\rangle_\mathcal{A}=
\sum_{j=1}^n\xi_j\eta_j^*$ for all $\xi=(\xi_1,\ldots,\xi_q)\in\mathcal{E}_0\,,\,\eta=(\eta_1,\ldots,\eta_q)\in\mathcal{E}_0$. We refer this as the
{\it canonical Hermitian structure} on $\mathcal{E}_0$. Let $\,\Omega_{D}^1(\mathcal{A})$ be the $\mathcal{A}$-bimodule $\{\sum\,a_j[D,b_j]:a_j,
b_j\in\mathcal{A}\}$ of noncommutative $1$-forms and $\,d:\mathcal{A}\longrightarrow\Omega_{D}^1(\mathcal{A})$, given by $\,a\longmapsto
[D,a]$, be the Dirac dga differential (\cite{Con3}). Note that $(da)^*=-da^*$ by convention.

\begin{definition}
Let $\,\mathcal{E}$ be a f.g.p. left module over $\mathcal{A}$ with a Hermitian structure $\langle\,\,\,,\,\,\rangle_\mathcal{A}$ on it. A compatible
connection on $\,\mathcal{E}\,$ is any $\,\mathbb{C}$-linear map $\,\nabla:\mathcal{E}\longrightarrow\Omega_{D}^1(\mathcal{A})\otimes_\mathcal{A}
\mathcal{E}$ satisfying
\begin{enumerate}
\item[(a)] $\nabla(a\xi)=a(\nabla\xi)+da\otimes\xi,\quad\forall\,\xi\in\mathcal{E},\,a\in\mathcal{A}\,;$
\item[(b)] $\langle\,\nabla\xi,\eta\,\rangle-\langle\,\xi,\nabla\eta\,\rangle=d\langle\,\xi,\eta\,\rangle_\mathcal{A}\quad\forall\,\xi,\eta\in
\mathcal{E}\,.$
\end{enumerate}
Any connection extends uniquely to a $\mathbb{C}$-linear map $\nabla:\Omega_{D}^\bullet(\mathcal{A})\otimes_\mathcal{A}\mathcal{E}\longrightarrow
\Omega_{D}^{\bullet+1}(\mathcal{A})\otimes_\mathcal{A}\mathcal{E}$ satisfying $\nabla(\omega\otimes\xi)=(-1)^{deg(\omega)}\omega\nabla(\xi)+d\omega
\otimes\xi$. The associated curvature of a connection is the $\mathcal{A}$-linear map $\,\varTheta_\nabla:\mathcal{E}\longrightarrow\Omega_{D}^2
(\mathcal{A})\otimes_\mathcal{A}\mathcal{E}$ given by the composition $\nabla\circ\nabla$.
\end{definition}

The meaning of the equality $(b)$ in $\Omega_D^1(\mathcal{A})$ is, if $\nabla(\eta)=\sum\omega_j\otimes\eta_j\in\Omega_D^1(\mathcal{A})\otimes
\mathcal{E}$, then $\langle\,\xi,\nabla\eta\,\rangle=\sum\,\langle\xi,\eta_j\rangle_\mathcal{A}\,\omega_j^*\,$.
\medskip

\textbf{A procedure to extend a $N=1$ spectral data to $N=(1,1)$ spectral data~:}\\
Start with a $N=1$ spectral data $(\mathcal{A},\mathcal{H},D,\gamma)$ equipped with a real structure $J$ (\cite{Con4},\cite{Con5}). That is,
there exists an anti-unitary operator $J$ on $\mathcal{H}$ such that
\begin{center}
$J^2=\varepsilon I\quad,\quad JD=\varepsilon^\prime DJ\quad,\quad J\gamma=\varepsilon^{\prime\prime}\gamma J\quad$
\end{center}
for some signs $\,\varepsilon,\varepsilon^\prime,\varepsilon^{\prime\prime}=\pm 1$ depending on $KO$-dimension $n\in\mathbb{Z}_8$ and satisfying
\begin{center}
$[JaJ^*,b]=[JaJ^*,[D,b]]=0\,\,\,\,\forall\,\,a,b\in\mathcal{A}\,.$
\end{center}
The real structure $J$ now enables us to equip the Hilbert space $\mathcal{H}$ with an $\mathcal{A}$-bimodule structure
\begin{center}
$a\,.\,\xi\,.\,b:=aJb^*J^*(\xi)\,.$
\end{center}
We can extend this to a right action of $\,\Omega_D^1(\mathcal{A}):=\{\sum_j\,a_j[D,b_j]:a_j,b_j\in\mathcal{A}\}$ on $\mathcal{H}$ by the rule
\begin{center}
$\xi\,.\,\omega:=J\omega^*J^*(\xi)\,.$
\end{center}
Assume that $\mathcal{H}$ contains a dense f.g.p. left $\mathcal{A}$-module $\mathcal{E}$ equipped with a Hermitian structure $\langle.,.
\rangle_\mathcal{A}$, which is stable under $J$ and $\gamma$. In particular, $\mathcal{E}$ is itself an $\mathcal{A}$-bimodule. We make
$\,\mathcal{E}\otimes_\mathcal{A}\mathcal{E}$ into an inner-product space by the following rule~:
\begin{eqnarray}\label{the inner product}
\langle\xi\otimes\eta\,,\,\xi^\prime\otimes\eta^\prime\rangle:=\langle\eta\,,\langle J\xi,J\xi^\prime\rangle_\mathcal{A}(\eta^\prime)\rangle_\mathcal{H}\,.
\end{eqnarray}
Let $\,\widetilde{\mathcal{H}}:=\overline{\mathcal{E}\otimes_\mathcal{A}\mathcal{E}}^{\langle\,\,,\,\,\rangle}$. Define the anti-linear flip operator
\begin{align*}
\Psi:\Omega_D^1(\mathcal{A})\otimes_\mathcal{A}\mathcal{E} &\longrightarrow \mathcal{E}\otimes_\mathcal{A}\Omega_D^1(\mathcal{A})\\
\omega\otimes\xi &\longmapsto J\xi\otimes\omega^*\,.
\end{align*}
It is easy to verify that $\Psi$ is well-defined and satisfies $\Psi(as)=\Psi(s)a^*,\,\,\forall\,s\in\Omega_D^1(\mathcal{A})\otimes_\mathcal{A}
\mathcal{E}$. Consider a compatible connection
\begin{center}
$\nabla:\mathcal{E}\longrightarrow\Omega_D^1(\mathcal{A})\otimes_\mathcal{A}\mathcal{E}$
\end{center}
such that $\nabla$ commutes with the grading $\gamma$ on $\,\mathcal{E}\subseteq\mathcal{H}$, i,e. $\nabla\gamma\xi=(1\otimes\gamma)\nabla\xi,\,
\forall\,\xi\in\mathcal{E}$. For each such connection $\nabla$ on $\mathcal{E}$, there is the following associated right-connection
\begin{align*}
\overline{\nabla}:\mathcal{E} &\longrightarrow \mathcal{E}\otimes_\mathcal{A}\Omega_D^1(\mathcal{A})\\
\xi &\longmapsto -\Psi(\nabla J^*\xi)
\end{align*}
Thus, we get a $\mathbb{C}$-linear map (the so called ``tensored connection'')
\begin{align*}
\widetilde\nabla:\mathcal{E}\otimes_\mathcal{A}\mathcal{E} &\longrightarrow \mathcal{E}\otimes_\mathcal{A}\Omega_D^1(\mathcal{A})\otimes_\mathcal{A}
\mathcal{E}\\
\xi_1\otimes\xi_2 &\longmapsto \overline{\nabla}\xi_1\otimes\xi_2+\xi_1\otimes\nabla\xi_2
\end{align*}
Note that $\widetilde\nabla$ is not a connection in the usual sense because of the position of $\,\Omega_D^1(\mathcal{A})$. Define the following two
$\,\mathbb{C}$-linear maps
\begin{align*}
c\,,\,\overline{c}\,:\mathcal{E}\otimes_\mathcal{A}\Omega_D^1(\mathcal{A})\otimes_\mathcal{A}\mathcal{E} &\longrightarrow \mathcal{E}
\otimes_\mathcal{A}\mathcal{E}\\
c:\xi_1\otimes\omega\otimes\xi_2 &\longmapsto \xi_1\otimes\omega\,.\,\xi_2\,,\\
\overline{c}:\xi_1\otimes\omega\otimes\xi_2 &\longmapsto \xi_1\,.\,\omega\otimes\gamma\xi_2\,.
\end{align*}
Now, introduce the following densely defined unbounded operators on $\widetilde{\mathcal{H}}$ $$\mathfrak{D}:=c\circ\widetilde{\nabla}\quad,\quad
\overline{\mathfrak{D}}:=\overline{c}\circ\widetilde{\nabla}$$ (Caution: $\overline{\mathfrak{D}}$ is not the closure of $\mathfrak{D}$). In order
to obtain a set of $N=(1,1)$ spectral data on $\mathcal{A}$, one has to find a specific connection $\nabla$ on a suitable dense Hermitian f.g.p. left
$\mathcal{A}$-module $\,\mathcal{E}$ such that
\begin{itemize}
\item[(a)] The operators $\mathfrak{D}$ and $\overline{\mathfrak{D}}$ become essentially self-adjoint on $\widetilde{\mathcal{H}}$,
\item[(b)] The relations $\,\mathfrak{D}^2=\overline{\mathfrak{D}}^2$ and $\{\mathfrak{D},\overline{\mathfrak{D}}\}=0$ are satisfied.
\end{itemize}
The $\mathbb{Z}_2$-grading on $\widetilde{\mathcal{H}}$ is simply the tensor product grading $\,\widetilde{\gamma}:=\gamma\otimes\gamma$, and the
Hodge operator is taken to be $\,\star:=1\otimes\gamma$ (In \cite{FGR2}, this is mistakenly taken as $\,\star=\gamma\otimes 1$). The sextuple
$(\mathcal{A},\widetilde{\mathcal{H}},\mathfrak{D},\overline{\mathfrak{D}},\widetilde{\gamma},\star)$ is a candidate of $N=(1,1)$ spectral data
extending the $N=1$ spectral data $(\mathcal{A},\mathcal{H},D,\gamma)$. This Hodge operator additionally satisfies $\,\star^2=1$ and $[\star,\gamma]=0$.
Hence, Lemma (\ref{a crucial relation involving Hodge}) holds for this extension procedure.
\medskip

Recently, behaviour of this extension procedure under tensor product has been investigated in (\cite{G}). Apart from the classical case of manifolds,
existence of such suitable connection $\nabla$ is known for the noncommutative $2$-torus and the fuzzy $3$-sphere (target space of the $SU(2)$ WZW
model \cite{Wit2}). However, the general case remains open. In the next section we will see a class of examples coming from certain $C^*$-dynamical
systems satisfying this extension procedure.
\bigskip


\section {K\"ahler structure on certain \texorpdfstring{$C^*$}{Lg}-dynamical systems}

\begin{definition}
A $\,C^*$-dynamical system is a tuple $(\mathcal{A},G,\alpha)$ where $\mathcal{A}$ is a unital $\,C^*$-algebra, $G$ is a real Lie group and $\,\alpha:
G\rightarrow Aut(\mathcal{A})$ is a strongly continuous group homomorphism $($i,e. for all $a\in\mathcal{A}$, the map $g\mapsto\alpha_g(a)$ is
continuous$)$.
\end{definition}

We will work with $C^*$-dynamical systems $(\mathcal{A},G,\alpha)$ equipped with a faithful $G$-invariant trace $\tau$, i,e. $\tau(\alpha_g(a))=
\tau(a)$ for all $g\in G$. This is in line with (\cite{Con1},\cite{CR},\cite{Rfl3}). Note that if the Lie group is compact and the action is ergodic
then the unique $G$-invariant state is a faithful trace on $\mathcal{A}$ (\cite{HLS}). We say that $a\in\mathcal{A}$ is smooth if the map $g\mapsto
\alpha_g(a)$ is in $C^\infty(G,\mathcal{A})$. The involutive algebra $\mathcal{A}^{\infty}=\{a\in\mathcal{A}:\,a\,\,\hbox{is smooth}\}$ is a norm
dense subalgebra of $\mathcal{A}$, called the smooth subalgebra. Note that this is unital as well. One crucial property enjoyed by this subalgebra
is that it is closed under the holomorphic function calculus inherited from the ambient $C^*$-algebra $\mathcal{A}$ (\cite{GVF}). Henceforth, we
will always work with the smooth subalgebra $\mathcal{A}^{\infty}$ and denote it simply by $\mathcal{A}$ for notational brevity.

To begin with we recall a result in (\cite{DD}) which provides an explicit set of generators of the irreducible representations of $\,\mathbb{C}l(n)$
for all $n$, together with an explicit involution $J$ and (if $n$ is even) a grading operator $\gamma$. This is summarized in below.

\begin{proposition}[\cite{DD}]\label{Clifford representation}
Consider a positive integer $n$ and an irreducible representation of $\,\mathbb{C}l(n)$ on a vector space $\mathbb{V}$. Up to unitary equivalence,
it is determined by $n$ many matrices $\gamma_j$ such that
\begin{center}
$\gamma_j^*=-\gamma_j\quad,\quad\gamma_j\gamma_k+\gamma_k\gamma_j=-2\delta_{jk}$.
\end{center}
If $n$ is even, there is a $\mathbb{Z}_2$ grading operator $\gamma_V$ satisfying $\gamma_V\gamma_j=-\gamma_j\gamma_V$ for all $j=1,\ldots,n$.
Moreover, there is an explicit anti-isometry $J_V\,\,($charge conjugation$)$ satisfying
\begin{center}
$(J_V)^{2}=\varepsilon\quad,\quad J_V\gamma_j=\varepsilon^\prime\gamma_jJ_V\quad,\quad J_V\gamma_V=\varepsilon^{\prime\prime}\gamma_VJ_V$
\end{center}
for some signs $\,\varepsilon,\varepsilon^\prime,\varepsilon^{\prime\prime}\in\{1,-1\}$ depending on $n$ modulo $8\,:$
\medskip

\begin{center}
\begin{tabular}{|c| c c c c| c c c c|}
\hline
$n$ & $0$ & $2$ & $4$ & $6$ & $1$ & $3$ & $5$ & $7$ \\ [0.8ex]
\hline
$\varepsilon$ & $+$ & $-$ & $-$ & $+$ & $+$ & $-$ & $-$ & $+$\\
$\varepsilon^\prime$  & $+$ & $+$ & $+$ & $+$ & $-$ & $+$ & $-$ & $+$\\
$\varepsilon^{\prime\prime}$ & $+$ & $-$ & $+$ & $-$ & $\,$ & $\,$ & $\,$ & $\,$\\
\hline
\end{tabular}
\end{center}
\end{proposition}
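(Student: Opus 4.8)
The plan is to split the statement into three parts and prove each one: existence and uniqueness (up to unitary equivalence) of the generators $\gamma_j$, existence of the grading operator $\gamma_V$ in even degree, and existence of the charge conjugation $J_V$ together with the sign table. The backbone is an explicit induction on $n$ combined with the structure theory of the complexified Clifford algebras.

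First I would recall that $\mathbb{C}l(2k)\cong M_{2^k}(\mathbb{C})$ is simple, with a unique irreducible representation of dimension $2^k$, while $\mathbb{C}l(2k+1)\cong M_{2^k}(\mathbb{C})\oplus M_{2^k}(\mathbb{C})$ has exactly two inequivalent irreducibles of dimension $2^k$, interchanged by $\gamma_j\mapsto-\gamma_j$ (which is not inner there, the volume element being central). The generators are produced inductively: for $n=1$ take $\gamma_1=i$ on $\mathbb{C}$, for $n=2$ take $\gamma_1=i\sigma_1,\ \gamma_2=i\sigma_2$ on $\mathbb{C}^2$, and pass from $n$ to $n+2$ by tensoring with $\mathbb{C}^2$, putting $\gamma_j^{(n+2)}=\gamma_j^{(n)}\otimes\sigma_3$ for $j\le n$ and $\gamma_{n+1}^{(n+2)}=I\otimes i\sigma_1$, $\gamma_{n+2}^{(n+2)}=I\otimes i\sigma_2$. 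A direct check shows $\gamma_j^*=-\gamma_j$ and $\gamma_j\gamma_k+\gamma_k\gamma_j=-2\delta_{jk}$ are preserved and the dimensions match $2^{\lfloor n/2\rfloor}$. Uniqueness up to unitary equivalence then follows from the classification: any two irreducibles are equivalent as representations, and since each $\gamma_j$ is skew-adjoint the intertwiner can be taken unitary (the finite group generated by the $\gamma_j$ acts unitarily for a suitable invariant inner product, unique up to a scalar by irreducibility).

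For the grading in even degree $n=2k$ I would take the normalised volume element $\gamma_V=c\,\gamma_1\cdots\gamma_{2k}$ for a suitable $c\in\{1,i\}$ (chosen so that $\gamma_V^*=\gamma_V$ and $\gamma_V^2=1$; a short reordering computation shows $c=1$ works when $k$ is even and $c=i$ when $k$ is odd), and then $\gamma_V\gamma_j=-\gamma_j\gamma_V$ is immediate. For the charge conjugation, note that entrywise complex conjugation sends $\{\gamma_j\}$ to another family $\{\overline{\gamma_j}\}$ obeying the same relations; by the classification there is a unitary $B$ with $B\,\overline{\gamma_j}\,B^{-1}=\varepsilon^\prime\gamma_j$ for a fixed sign $\varepsilon^\prime$ (in even degree this can be arranged with $\varepsilon^\prime=+1$ since the automorphism $\gamma_j\mapsto-\gamma_j$ is implemented by $\gamma_V$). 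Set $J_V\xi:=B\overline{\xi}$. Then $J_V$ is anti-unitary, $J_V\gamma_j=\varepsilon^\prime\gamma_jJ_V$, and $J_V^2$ is linear, unitary, and commutes with every $\gamma_j$, hence is a scalar $\varepsilon$ by irreducibility; anti-unitarity forces $\varepsilon$ real, so $\varepsilon=\pm1$. Finally $J_V\gamma_VJ_V^{-1}=(\overline{c}/c)(\varepsilon^\prime)^{2k}\gamma_V=(\overline{c}/c)\gamma_V$, so $\varepsilon^{\prime\prime}=+1$ when $k$ is even and $-1$ when $k$ is odd, matching the $\varepsilon^{\prime\prime}$ row.

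What remains, and what I expect to be the only genuine obstacle, is pinning down the exact dependence of $(\varepsilon,\varepsilon^\prime)$ on $n\bmod 8$. For this I would carry $J_V$ through the same $n\mapsto n+2$ recursion in the form $J_V^{(n+2)}=J_V^{(n)}\otimes J_V^{(2)}$ — composing with $\gamma_V$ where necessary to keep $\varepsilon^\prime$ consistent with the table — starting from the explicit $n=1,2$ data, and bookkeeping how the four signs transform at each step. The period-$8$ pattern then falls out because the factor $J_V^{(2)}$ contributes $(J_V^{(2)})^2=-1$, so two recursion steps flip $\varepsilon$ and four steps return to the starting configuration; this is precisely the appearance of real Bott periodicity in the Atiyah--Bott--Shapiro classification of Clifford algebras. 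The difficulty is purely one of careful, error-prone sign-tracking through the tensor factors rather than any conceptual subtlety, and it is exactly this verification that generates the table; the odd columns additionally use that the two inequivalent irreducibles are exchanged by $\gamma_j\mapsto-\gamma_j$, so that only $\varepsilon$ and $\varepsilon^\prime$ carry information there.
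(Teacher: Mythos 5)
The paper does not prove this proposition at all: it is quoted verbatim from Dabrowski--Dossena \cite{DD}, where the generators, the grading and the charge conjugation are written down explicitly and the table is read off from those formulas. So there is no in-paper argument to compare against, and your task was really to reconstruct the content of the cited reference. Your outline is the standard one and is essentially sound: the structure theory $\mathbb{C}l(2k)\cong M_{2^k}(\mathbb{C})$, $\mathbb{C}l(2k+1)\cong M_{2^k}(\mathbb{C})\oplus M_{2^k}(\mathbb{C})$ gives existence and the counting of irreducibles; the averaging argument over the finite group generated by the $\gamma_j$ correctly upgrades equivalence to unitary equivalence; the normalised volume element with $c=1$ for $k$ even and $c=i$ for $k$ odd is exactly the grading the paper later uses (it states that $\prod e_j$ represents $\sigma$ for $k$ even and $-i\sigma$ for $k$ odd); and the Schur-plus-antilinearity argument that $J_V^2$ is a real scalar, together with $J_V\gamma_VJ_V^{-1}=(\overline{c}/c)\gamma_V$, correctly reproduces the $\varepsilon''$ row. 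Your low-dimensional seeds also check out ($n=1$: $\varepsilon=+,\varepsilon'=-$; $n=2$ with $B=\sigma_2$: $\varepsilon=-,\varepsilon'=+,\varepsilon''=-$).

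The one genuine shortfall is that the $(\varepsilon,\varepsilon')$ rows of the table --- which are the actual content of the proposition --- are deferred rather than derived, and the heuristic you give for them is not literally correct as stated. If one naively sets $J_V^{(n+2)}=J_V^{(n)}\otimes J_V^{(2)}$, then $(J_V^{(n+2)})^2=(J_V^{(n)})^2\otimes(J_V^{(2)})^2$, so $\varepsilon$ would flip at \emph{every} step of the recursion, producing the period-$4$ pattern $+,-,+,-$ in $n$ rather than the period-$8$ pattern $+,-,-,+$ of the table. The correct pattern only emerges after the $\gamma_V$-twist you allude to (needed at alternate steps to restore $\varepsilon'=+1$ on the even part and the correct sign on the odd part), which multiplies $\varepsilon$ by $\varepsilon''$ and is precisely what breaks the naive periodicity. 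Since this twist is where all the mod-$8$ information lives, ``two recursion steps flip $\varepsilon$ and four steps return'' cannot be left as a remark: it is the step that has to be carried out, and as written it would lead you to the wrong table. Everything else in your proposal would survive that computation unchanged.
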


\textbf{Candidate of a $N=1$ spectral data associated with $C^*$-dynamical systems~:}\\
Let $(\mathcal{A},G,\alpha,\tau)$ be a $\,C^*$-dynamical system equipped with a $G$-invariant faithful trace $\tau$. Let $\,dim(G)=n$ and $N=2^{\lfloor
n/2\rfloor}$. Let $\{X_1,\ldots,X_n\}$ be a basis of the Lie algebra $\mathfrak{g}$ of the Lie group $G$. Letting $\,\mathcal{H}=L^2(\mathcal{A},
\tau)$ the G.N.S Hilbert space, we obtain a covariant representation of $(\mathcal{A},G,\alpha)$ on $\mathcal{H}$. Note that there is a bijective
correspondence between covariant representations of $(\mathcal{A},G,\alpha)$ and non-degenerate $\star$-representations of $\mathcal{A}\rtimes_\alpha
G$. We obtain the following densely defined unbounded symmetric operator $$D:=\sum_{j=1}^n\,\partial_j\otimes\gamma_j$$ acting on $\,\widetilde{
\mathcal{H}}=L^2(\mathcal{A},\tau)\otimes\mathbb{C}^N$, where $\partial_j(a):=\frac{d}{dt}|_{t=0}\,\alpha_{exp(tX_j)}(a)$ and $\gamma_j$ as in the
above Proposition. Note that the map $\,\partial:\mathfrak{g}\longrightarrow Der(\mathcal{A})$ given by $\,X_j\longmapsto\partial_j$, where
$Der(\mathcal{A})$ is the Lie algebra of derivations on $\mathcal{A}$, is a Lie algebra homomorphism i,e. $[\partial_j,\partial_\ell]=
\partial_{[j,\ell]}$. Moreover, there is always a real structure $J=J_0\otimes J_N$, where $J_0$ is the anti-linear operator $a\mapsto a^*$ and
$J_N=J_V$ as in the above Proposition (\ref{Clifford representation}), satisfying
\begin{center}
$J^2=\varepsilon I\quad,\quad JD=\varepsilon^\prime DJ\quad,\quad J\gamma_N=\varepsilon^{\prime\prime}\gamma_N J$
\end{center}
(if grading $\gamma_N=\gamma_V$ exists) for some signs $\,\varepsilon,\varepsilon^\prime,\varepsilon^{\prime\prime}=\pm 1$ depending on $n\in
\mathbb{Z}_8$ and satisfying the above mentioned table. We also have
\begin{center}
$[JaJ^*,b]=[JaJ^*,[D,b]]=0\,\,\,\,\forall\,\,a,b\in\mathcal{A}\,.$
\end{center}
It is known that $D$, defined above, admits a self-adjoint extension (\cite{Gab}). But the summability and compactness of the resolvent of $D$ is
not guaranteed. So, if $D$ is essentially self-adjoint with compact resolvent and gives the $\varTheta$-summability (note that any finitely summable
spectral triple is $\varTheta$-summable \cite{Con7}), then we obtain a $\varTheta$-summable even spectral triple $(\mathcal{A},\widetilde{\mathcal{H}},
D,\gamma_N)$ if $n$ is even; otherwise odd spectral triple $(\mathcal{A},\widetilde{\mathcal{H}},D)$ if $n$ is odd. However, existence of such a
self-adjoint extension of $D$ is an intricate question and that is why we only get a candidate of a $N=1$ spectral data.

\begin{remark}\rm
\begin{itemize}
\item[(a)] It is known that if the Lie group $G$ is compact then $D$, defined above, is essentially self-adjoint (Propn. $[4.1]$ in \cite{Gab}).
\item[(b)] If the Lie group $G$ is compact and acts ergodically then we obtain a $dim(G)$-summable (and hence $\varTheta$-summable) spectral triple
(Thm. $[5.4]$ in \cite{Gab}), independent of the choice of the Lie algebra basis. This is the case for the noncommutative $n$-torus $\mathcal{A}_\Theta$.
\item[(c)] Compactness and ergodicity is a sufficient condition only. Recall the case of quantum Heisenberg manifolds (\cite{Rfl2}) where the Lie
group acting is noncompact namely, the Heisenberg group. It is known (\cite{CSi},\cite{CG2}) that one gets an honest $3$-summable spectral
triple in this case for a suitable choice of the Heisenberg Lie algebra basis.
\item[(d)] There is no characterization of $C^*$-dynamical systems known yet which gives genuine finite or $\varTheta$-summable spectral triples
by the above discussed method.
\end{itemize}
\end{remark}

Guided by these we start with a $C^*$-dynamical system $(\mathcal{A},G,\alpha)$ equipped with a $G$-invariant faithful trace $\tau$, where $G$ is an
even dimensional, connected, abelian Lie group, so that the candidate, discussed above, determines an honest $N=1$ spectral data $(\mathcal{A},
\mathcal{H},D,\sigma)$ with $\,\sigma$ the $\mathbb{Z}_2$ grading. Since the Lie algebra $\,\mathfrak{g}$ is abelian, i,e. $[X_j,X_\ell]=0$ for all
$\,j,\ell\in\{1,\ldots,dim(G)\}$, we have $[\partial_j,\partial_\ell]=0$. Our first objective is to show that this $N=1$ spectral data always extends
to $N=(1,1)$ spectral data over $\mathcal{A}$ by the procedure of extension discussed in Section $(2)$. Then we produce K\"ahler structures on
$\mathcal{A}$. The key ingredient is the Grassmannian connection as we shall see. Let $dim(G)=2k$ and $N=2^{\lfloor dim(G)/2\rfloor}=2^k$. Consider
the dense finitely generated free left $\mathcal{A}$-module $\,\mathcal{E}:=\mathcal{A}\otimes\mathbb{C}^N\subseteq\mathcal{H}=L^2(\mathcal{A},\tau)
\otimes\mathbb{C}^N$ equipped with the canonical Hermitian structure. Clearly, $\mathcal{E}$ stable under the real structure $J=J_0\otimes J_N$ and
the grading operator $\,\sigma$. Consider the following $\mathbb{C}$-linear map
\begin{align*}
\nabla:\mathcal{E} &\longrightarrow \Omega_D^1(\mathcal{A})\otimes_\mathcal{A}\mathcal{E}\\
\xi &\longmapsto (d\xi_1,\ldots\ldots,d\xi_N)
\end{align*}
for $\xi=(\xi_1,\ldots,\xi_N)\in\mathcal{E}$, where $d:\mathcal{A}\longrightarrow\Omega_D^1(\mathcal{A})$, given by $a\mapsto[D,a]$, is the Dirac dga
differential. This is the Grassmannian connection on the free left $\mathcal{A}$-module $\mathcal{E}$ and is easily seen to be compatible with the
canonical Hermitian structure given by $\langle\xi,\eta\rangle:=\sum_{j=1}^N\,\xi_j\eta_j^*$. We fix the standard canonical free $\mathcal{A}$-module
basis $\{e_1,\ldots,e_N\}$ of $\,\mathcal{E}=\mathcal{A}\otimes\mathbb{C}^N$. By abuse of notation, same denotes the canonical linear basis of
$\,\mathbb{C}^N$ if no confusion arise.

\begin{lemma}
The Grassmannian connection $\nabla:\mathcal{E}\longrightarrow\Omega_D^1(\mathcal{A})\otimes_\mathcal{A}\mathcal{E}$ and its associated right
connection $\overline{\nabla}:\mathcal{E}\longrightarrow\mathcal{E}\otimes_\mathcal{A}\Omega_D^1(\mathcal{A})$ satisfy $\nabla e_j=\overline{\nabla}
e_j=0\,\,\forall\,j\in\{1,\ldots,N\}$, and it commutes with the $\mathbb{Z}_2$-grading $\,\sigma$.
\end{lemma}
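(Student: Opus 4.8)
The plan is to verify all three assertions by unwinding the definitions directly on the free module basis; nothing deep is involved, only careful bookkeeping of the anti-linear maps $J$ and $\Psi$. Under the identification $\mathcal{E}=\mathcal{A}\otimes\mathbb{C}^N$ the basis element $e_j$ is $1\otimes e_j$, where $1$ is the unit of $\mathcal{A}$ and (by abuse) $e_j$ also denotes the $j$-th standard vector of $\mathbb{C}^N$. Since $\nabla$ acts componentwise by $\xi\mapsto(d\xi_1,\dots,d\xi_N)$ with $da=[D,a]$, and the only nonzero coordinate of $e_j$ is the unit, we get $\nabla e_j=0$ at once from $[D,1]=0$. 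The same computation shows that $\nabla\xi=0$ for every $\xi\in\mathcal{E}$ all of whose coordinates are complex scalar multiples of $1$, since $[D,c\cdot 1]=c[D,1]=0$ for $c\in\mathbb{C}$; I will reuse this fact below.

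Next I would check compatibility with the grading, which is needed even to make sense of $\overline{\nabla}$ (recall $\overline{\nabla}$ was defined only for $\gamma$-commuting connections). On $\mathcal{E}\subseteq\mathcal{H}=L^2(\mathcal{A},\tau)\otimes\mathbb{C}^N$ the $\mathbb{Z}_2$-grading $\sigma=\gamma_N$ is of the form $1\otimes\gamma_V$: it is left $\mathcal{A}$-linear and acts only on the $\mathbb{C}^N$-coordinate index, whereas $d$ acts on the $\mathcal{A}$-coordinates and leaves that index untouched, so the two operations commute. Concretely, writing $\gamma_V e_j=\sum_k d_{kj}e_k$ with $d_{kj}\in\mathbb{C}$ and $\xi=\sum_j\xi_j\otimes e_j$, one computes $\nabla(\sigma\xi)=\sum_{j,k}d_{kj}\,d\xi_j\otimes e_k$ by $\mathbb{C}$-linearity of $d$, while $(1\otimes\sigma)(\nabla\xi)=\sum_j d\xi_j\otimes\gamma_V e_j=\sum_{j,k}d_{kj}\,d\xi_j\otimes e_k$ after moving the central scalars across the balanced tensor product over $\mathcal{A}$. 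The two sides agree, i.e. $\nabla\sigma\xi=(1\otimes\sigma)\nabla\xi$.

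Finally, for $\overline{\nabla}e_j=-\Psi(\nabla J^*e_j)=0$ I would use that $J=J_0\otimes J_N$ with $J_0\colon a\mapsto a^*$ an anti-unitary involution (so $J_0^*=J_0$ and $J_0(1)=1$) and $J_N$ a matrix on $\mathbb{C}^N$ with $J_N^2=\varepsilon I$ (so $J_N^*=\varepsilon J_N=\pm J_N$). Hence $J^*=J_0^*\otimes J_N^*$ and $J^*e_j=\pm\,1\otimes J_N e_j$, and since $J_N e_j$ is a fixed $\mathbb{C}$-linear combination $\sum_k c_k e_k$ of basis vectors, $J^*e_j$ is a tuple all of whose $\mathcal{A}$-entries lie in $\mathbb{C}\cdot 1$. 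By the reused fact from the first step $\nabla(J^*e_j)=0$, and therefore $\overline{\nabla}e_j=-\Psi(0)=0$ by anti-linearity of $\Psi$.

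There is no genuine obstacle here: this is a bookkeeping lemma. The only points deserving a moment's attention are that $J^*$ does not disturb the coordinates being scalar multiples of $1$ — which is precisely why it suffices to know $J=J_0\otimes J_N$ with $J_0(1)=1$ and $J_N$ acting only on $\mathbb{C}^N$ — and that $\sigma=1\otimes\gamma_V$ acts trivially on the $\mathcal{A}$-variable, so it commutes with the componentwise Dirac differential. Everything else is immediate from the definitions of $\nabla$, of $\overline{\nabla}=-\Psi\circ\nabla\circ J^*$, and of $\Psi$.
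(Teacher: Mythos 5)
Your argument is correct and follows essentially the same route as the paper: $\nabla e_j=0$ by definition of the Grassmannian connection, $J^*e_j$ is a complex scalar combination of the $e_k$ (using $J=J_0\otimes J_N$ and $J_N^*=\varepsilon J_N$), hence killed by the $\mathbb{C}$-linear map $\nabla$, so $\overline{\nabla}e_j=-\Psi(0)=0$. The only difference is that you spell out the grading-commutation computation that the paper dismisses as "easy to observe," which is a harmless (indeed welcome) addition.
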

\begin{proof}
Clearly, $\nabla e_j=0\,\,\forall\,j\in\{1,\ldots,N\}$ by its definition. Note that, $\overline{\nabla}(e_j)=-\Psi(\nabla J^*e_j)$. Since,
$J=J_0\otimes J_N,\,e_j=1\otimes(0,\ldots,1,\ldots,0)\in\mathcal{A}\otimes\mathbb{C}^N$ and $J_N^*=\varepsilon J_N$, we get
\begin{eqnarray*}
J^*e_j & = & \varepsilon 1\otimes J_N(0,\ldots,1,\ldots,0)^T\\
& = & \varepsilon 1\otimes ((J_N)_{1j},\ldots,(J_N)_{Nj})^T\\
& = & \sum_{k=1}^N\varepsilon(J_N)_{kj}e_k\,.
\end{eqnarray*}
Since, $(J_N)_{kj}$ are scalars  for all $k$ and $\nabla$ is $\mathbb{C}$-linear map satisfying $\nabla e_k=0$, our claim follows. Finally,
commutation of $\nabla$ with the $\mathbb{Z}_2$-grading operator is easy to observe.
\end{proof}

Note that any element of $\,\mathcal{E}\otimes_\mathcal{A}\mathcal{E}$ of the form $\,ae_i\otimes be_j$  can be written as $\,e_i.(J^*a^*J)\otimes
be_j$ (recall the right $\mathcal{A}$-module structure on $\mathcal{E}$), and since the tensor is over $\mathcal{A}$, this is same as $\,e_i\otimes
ce_j$ for some $c\in\mathcal{A}$. Now, for any arbitrary element $e_i\otimes a_{ij}e_j$ of $\,\mathcal{E}\otimes_\mathcal{A}\mathcal{E}$, the
tensored connection $\widetilde{\nabla}$ becomes
\begin{eqnarray*}
\widetilde{\nabla}(e_i\otimes a_{ij}e_j) & = & \overline{\nabla}e_i\otimes a_{ij}e_j+e_i\otimes\nabla(a_{ij}e_j)\\
& = & e_i\otimes da_{ij}\otimes e_j\,.
\end{eqnarray*}
So, we have
\begin{eqnarray}\label{two Dirac operators}
\mathfrak{D}(e_i\otimes a_{ij}e_j)\,:=\,c\circ\widetilde{\nabla}(e_i\otimes a_{ij}e_j) & = & e_i\otimes (da_{ij})\,.\,e_j\\\nonumber
\overline{\mathfrak{D}}(e_i\otimes a_{ij}e_j)\,:=\,\bar{c}\circ\widetilde{\nabla}(e_i\otimes a_{ij}e_j) & = & e_i\,.\,(da_{ij})\otimes\sigma e_j
\end{eqnarray}
where, $\,\sigma$ is the $\mathbb{Z}_2$-grading operator.

\begin{proposition}\label{an isomorphism of Hilbert spaces}
The Hilbert space $\,\overline{\mathcal{E}\otimes_\mathcal{A}\mathcal{E}}$ is $\,L^2(\mathcal{A},\tau)^{N^2}=L^2(\mathcal{A},\tau)\otimes\mathbb{C}^{N^2}$.
\end{proposition}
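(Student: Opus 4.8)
The plan is to build an explicit unitary identifying $\overline{\mathcal{E}\otimes_\mathcal{A}\mathcal{E}}$ with $L^2(\mathcal{A},\tau)^{N^2}=L^2(\mathcal{A},\tau)\otimes\mathbb{C}^{N^2}$, by pinning down a dense subspace of each side and matching inner products. First I would use the remark preceding the statement: every element of the algebraic tensor product $\mathcal{E}\otimes_\mathcal{A}\mathcal{E}$ is a finite $\mathbb{C}$-linear combination of elementary tensors $e_i\otimes a e_j$ with $a\in\mathcal{A}$ and $1\le i,j\le N$. Consequently the $\mathbb{C}$-multilinear assignment $(a,v_i,v_j)\mapsto e_i\otimes a e_j$ descends to a well-defined $\mathbb{C}$-linear map
$$\Psi_0\colon\ \mathcal{A}\otimes\mathbb{C}^N\otimes\mathbb{C}^N\ \longrightarrow\ \mathcal{E}\otimes_\mathcal{A}\mathcal{E},\qquad a\otimes v_i\otimes v_j\ \longmapsto\ e_i\otimes a e_j,$$
which is surjective onto $\mathcal{E}\otimes_\mathcal{A}\mathcal{E}$.

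The core of the proof is the evaluation of \eqref{the inner product} on these elementary tensors. Since $J=J_0\otimes J_N$ and $J_N$ is an anti-isometry of $\mathbb{C}^N$ carrying the orthonormal basis $\{v_1,\dots,v_N\}$ to an orthonormal set, one has $J e_i=1\otimes J_N v_i$ and hence, for the canonical Hermitian structure $\langle\xi,\eta\rangle_\mathcal{A}=\sum_m\xi_m\eta_m^*$, $\ \langle J e_i, J e_k\rangle_\mathcal{A}=\langle J_N v_i, J_N v_k\rangle_{\mathbb{C}^N}\,1=\delta_{ik}\,1$. Plugging this into \eqref{the inner product} collapses it to
$$\langle e_i\otimes a e_j,\ e_k\otimes b e_l\rangle=\delta_{ik}\,\langle a e_j,\ b e_l\rangle_\mathcal{H}=\delta_{ik}\,\delta_{jl}\,\langle a,b\rangle_{L^2(\mathcal{A},\tau)},$$
the last equality because $\mathcal{E}=\mathcal{A}\otimes\mathbb{C}^N\subseteq\mathcal{H}=L^2(\mathcal{A},\tau)\otimes\mathbb{C}^N$ and $\{v_j\}$ is orthonormal. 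Extended sesquilinearly, this reads $\langle\Psi_0 x,\Psi_0 y\rangle=\langle x,y\rangle$ for all $x,y\in\mathcal{A}\otimes\mathbb{C}^{N^2}$ equipped with its canonical inner product; in particular the form \eqref{the inner product} is positive-definite on $\mathcal{E}\otimes_\mathcal{A}\mathcal{E}$ (re-deriving here, in the present setting, a fact established abstractly in \cite{FGR2}) and $\Psi_0$ is an injective isometry, hence — being surjective — a bijection of pre-Hilbert spaces.

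Finally, faithfulness of $\tau$ makes $\mathcal{A}\otimes\mathbb{C}^{N^2}$ dense in $L^2(\mathcal{A},\tau)\otimes\mathbb{C}^{N^2}$, so $\Psi_0$ extends uniquely to a unitary $\Psi\colon L^2(\mathcal{A},\tau)\otimes\mathbb{C}^{N^2}\ \xrightarrow{\,\sim\,}\ \overline{\mathcal{E}\otimes_\mathcal{A}\mathcal{E}}$, which is the asserted identification (concretely, the $(i,j)$-slot of $\Psi^{-1}(e_i\otimes a e_j)$ is $a$). The one genuinely delicate point — everything else being bookkeeping — is the evaluation of \eqref{the inner product}: one must track carefully the right $\mathcal{A}$-module structure $\xi\cdot b=J b^*J^*\xi$ on the first copy of $\mathcal{E}$ when rewriting an arbitrary $a e_i\otimes b e_j$ as $e_i\otimes c e_j$, and keep straight the conjugation conventions in $\langle\,\cdot\,,\,\cdot\,\rangle_\mathcal{A}$ and in the $\mathbb{C}^N$ inner product, so that the Kronecker deltas and the factor $\langle a,b\rangle_{L^2(\mathcal{A},\tau)}$ come out exactly as above.
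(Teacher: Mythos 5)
Your proposal is correct and follows essentially the same route as the paper: identify $\mathcal{E}\otimes_\mathcal{A}\mathcal{E}$ with $\mathcal{A}\otimes\mathbb{C}^{N^2}$, evaluate the inner product \eqref{the inner product} using $\langle J\xi,J\xi'\rangle_\mathcal{A}=\sum_j\xi_j^*\xi_j'$ (equivalently your $\langle Je_i,Je_k\rangle_\mathcal{A}=\delta_{ik}1$), observe it is the standard inner product $\tau(a^*b)\otimes\langle\cdot,\cdot\rangle_{\mathbb{C}^{N^2}}$, and complete. You merely package the argument as an explicit unitary $\Psi_0$ and spell out surjectivity and positive-definiteness, which the paper leaves implicit.
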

\begin{proof}
Since $\mathcal{E}=\mathcal{A}\otimes\mathbb{C}^N$, we have $\mathcal{E}\otimes_\mathcal{A}\mathcal{E}$ is isomorphic with $\mathcal{A}\otimes
\mathbb{C}^{N^2}$. Because $\mathcal{E}$ has the canonical Hermitian structure on it, from the inner-product defined in  (\ref{the inner product}),
it follows that
\begin{eqnarray*}
\langle\xi\otimes\eta\,,\,\xi^\prime\otimes\eta^\prime\rangle & = & \langle\eta\,,\langle J\xi,J\xi^\prime\rangle_\mathcal{A}(\eta^\prime)\rangle_\mathcal{H}\\
& = & \sum_{\ell,j}\langle\eta_\ell\,,\xi_j^*\xi_j^\prime\eta_\ell^\prime\rangle\\
& = & \sum_{\ell,j}\tau\left(\eta_\ell^*\xi_j^*\xi_j^\prime\eta_\ell^\prime\right)\,.
\end{eqnarray*}
This is precisely the inner-product on $\mathcal{A}\otimes\mathbb{C}^{N^2}$ given by the inner-product $\langle a,b\rangle:=\tau(a^*b)$ on
$\mathcal{A}$ and the usual inner-product on $\,\mathbb{C}^{N^2}$. The completion is the Hilbert space $L^2(\mathcal{A},\tau)^{N^2}=L^2
(\mathcal{A},\tau)\otimes\mathbb{C}^{N^2}$, and this concludes the proof.
\end{proof}

\begin{lemma}\label{symmetric operator}
$\mathfrak{D}$ and $\overline{\mathfrak{D}}$ are densely defined symmetric operators acting on the Hilbert space $\,\overline{\mathcal{E}
\otimes_\mathcal{A}\mathcal{E}}$.
\end{lemma}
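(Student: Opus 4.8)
The plan is to verify symmetry of $\mathfrak{D}$ and $\overline{\mathfrak{D}}$ directly on the dense subspace $\mathcal{E}\otimes_{\mathcal{A}}\mathcal{E}\subseteq\overline{\mathcal{E}\otimes_{\mathcal{A}}\mathcal{E}}$ using the explicit formulas in $(\ref{two Dirac operators})$ together with the identification of the Hilbert space in Proposition $(\ref{an isomorphism of Hilbert spaces})$. First I would reduce the computation to the generators: since every element of $\mathcal{E}\otimes_{\mathcal{A}}\mathcal{E}$ can be written in the form $e_i\otimes a_{ij}e_j$ (with summation over repeated indices), and since both operators are $\mathbb{C}$-linear, it suffices to check $\langle\mathfrak{D}(e_i\otimes a\,e_j),e_k\otimes b\,e_\ell\rangle=\langle e_i\otimes a\,e_j,\mathfrak{D}(e_k\otimes b\,e_\ell)\rangle$ for $a,b\in\mathcal{A}$, and similarly for $\overline{\mathfrak{D}}$.

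The key step is to unwind the right action. Recall $\xi.\omega=J\omega^*J^*(\xi)$ and that for $\omega=da_{ij}=[D,a_{ij}]$ we have $(da_{ij})^*=-da_{ij}^*$; moreover, because $\mathcal{E}$ carries the canonical Hermitian structure and $D=\sum_j\partial_j\otimes\gamma_j$ with $\tau$ a faithful $G$-invariant trace, the derivations $\partial_j$ are antisymmetric on $L^2(\mathcal{A},\tau)$ (this is the integration-by-parts identity $\tau(\partial_j(a)b)=-\tau(a\,\partial_j(b))$, which follows from $G$-invariance of $\tau$), while the matrices $\gamma_j$ are skew-adjoint. Hence $[D,\cdot]$ interacts with the inner product exactly as an exterior-derivative-type operator should, and the right-action terms $(da_{ij}).e_j$ and $e_i.(da_{ij})$ can be moved across the inner product at the cost of the sign already built into the convention $(da)^*=-da^*$. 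I would then plug the explicit inner product from Proposition $(\ref{an isomorphism of Hilbert spaces})$, namely $\langle\xi\otimes\eta,\xi'\otimes\eta'\rangle=\sum_{\ell,j}\tau(\eta_\ell^*\xi_j^*\xi_j'\eta_\ell')$, into both sides and check that the two expressions agree after one application of the trace property and antisymmetry of the $\partial_j$. For $\overline{\mathfrak{D}}$ the only extra ingredient is that $\sigma$ is a self-adjoint (indeed unitary) grading operator commuting with $\mathcal{A}$, so the trailing $\sigma e_j$ contributes no sign obstruction to symmetry.

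The main obstacle I anticipate is bookkeeping rather than anything deep: one must be careful that the tensor is over $\mathcal{A}$, so an expression like $e_i\otimes a e_j$ is only well-defined up to moving scalars through $\mathcal{A}$ via the bimodule structure $a.\xi.b=aJb^*J^*(\xi)$, and one must check that the formulas for $\mathfrak{D},\overline{\mathfrak{D}}$ respect these relations (this is implicit in the Leibniz rule for $\nabla$ and the fact, recorded just before the lemma, that $\widetilde{\nabla}(e_i\otimes a_{ij}e_j)=e_i\otimes da_{ij}\otimes e_j$). Once the representatives are fixed, the symmetry computation is a one-line application of faithfulness of $\tau$, the trace identity, and antisymmetry of the $\partial_j$; densely-definedness is immediate since $\mathfrak{D},\overline{\mathfrak{D}}$ are defined on all of the dense subspace $\mathcal{E}\otimes_{\mathcal{A}}\mathcal{E}$. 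I would close by remarking that this is exactly the setup needed for the subsequent step, where one upgrades ``symmetric'' to ``essentially self-adjoint'' and establishes the relations $\mathfrak{D}^2=\overline{\mathfrak{D}}^{\,2}$, $\{\mathfrak{D},\overline{\mathfrak{D}}\}=0$ using that $\mathfrak{g}$ is abelian.
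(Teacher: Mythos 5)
Your proposal is correct and follows essentially the same route as the paper: a direct verification of symmetry on the dense subspace $\mathcal{E}\otimes_{\mathcal{A}}\mathcal{E}$ using the explicit inner product from Proposition (\ref{an isomorphism of Hilbert spaces}), the skew-adjointness $\gamma_r^*=-\gamma_r$, and the $G$-invariance of $\tau$ (which the paper uses in the equivalent form $\tau(\partial_r(a))=0$, i.e.\ the Leibniz-integrated version of your integration-by-parts identity). The paper's computation collapses the difference of inner products to $\sum_r\tau\bigl(\partial_r(a_{ij}^*\overline{\gamma_{r\ell j}}\,a_{i\ell})\bigr)=0$, which is exactly the bookkeeping you describe.
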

\begin{proof}
We have
\begin{eqnarray*}
&   & \langle\mathfrak{D}(e_i\otimes a_{ij}e_j),e_m\otimes a_{m\ell}e_\ell\rangle-\langle e_i\otimes a_{ij}e_j,\mathfrak{D}(e_m\otimes a_{m\ell}e_\ell)\rangle\\
& = & \langle e_i\otimes (da_{ij}).e_j,e_m\otimes a_{m\ell}e_\ell\rangle-\langle e_i\otimes a_{ij}e_j,e_m\otimes (da_{m\ell}).e_\ell\rangle\\
& = & \langle (da_{ij}).e_j,\langle e_i,e_m\rangle_\mathcal{A}(a_{m\ell}e_\ell)\rangle-\langle a_{ij}e_j,\langle e_i,e_m\rangle_\mathcal{A}(da_{m\ell}).e_\ell\rangle\\
& = & \delta_{im}\left(\langle (da_{ij}).e_j,a_{m\ell}e_\ell\rangle-\langle a_{ij}e_j,(da_{m\ell}).e_\ell\rangle\right)\\
& = & \langle (da_{ij}).e_j,a_{i\ell}e_\ell\rangle-\langle a_{ij}e_j,(da_{i\ell}).e_\ell\rangle\\
& = & \left\langle\,\sum_{r=1}^{2k}\partial_r(a_{ij})\otimes(\gamma_{r1j},\ldots,\gamma_{rNj})\,,\,a_{i\ell}e_\ell\right\rangle-\left\langle a_{ij}
e_j\,,\,\sum_{r=1}^{2k}\partial_r(a_{i\ell})\otimes(\gamma_{r1\ell},\ldots,\gamma_{rN\ell})\right\rangle\\
& = & \sum_{r=1}^{2k}\,\langle(\partial_r(a_{ij})\gamma_{r1j},\ldots,\partial_r(a_{ij})\gamma_{rNj}),(0,\ldots,a_{i\ell},\ldots,0)\rangle\\
&   & \quad-\langle(0,\ldots,a_{ij},\ldots,0),(\partial_r(a_{i\ell})\gamma_{r1\ell},\ldots,\partial_r(a_{i\ell})\gamma_{rN\ell})\rangle\\
& = & \sum_{r=1}^{2k}\tau((\partial_r(a_{ij})\gamma_{r\ell j})^*a_{i\ell})-\tau(a_{ij}^*\partial_r(a_{i\ell})\gamma_{rj\ell})\\
& = & \sum_{r=1}^{2k}\tau\left(\partial_r(a_{ij}^*)\overline{\gamma_{r\ell j}}a_{i\ell}\right)+\tau\left(a_{ij}^*\partial_r(a_{i\ell}\overline{\gamma_{r\ell j}})\right)\\
& = & \sum_{r=1}^{2k}\tau\left(\partial_r(a_{ij}^*\overline{\gamma_{r\ell j}}a_{i\ell})\right)
\end{eqnarray*}
Here, we are using the fact that for all $\,r\in\{1,\ldots,2k\},\,\gamma_r^*=-\gamma_r$. Hence, $\overline{(\gamma_r)_{\ell j}}=-(\gamma_r)_{j\ell}$.
Now, for any $a\in\mathcal{A}$,
\begin{center}
$\tau(\partial_r(a))\,=\,\tau\left(\frac{d}{dt}|_{t=0}\,\alpha_{exp(tX_r)}(a)\right)\,=\,0$
\end{center}
for all $r\in\{1,\ldots,2k\}$, because $\tau$ is a $G$-invariant trace. This proves that $\mathfrak{D}$ is a symmetric operator. Similarly, one can
show for $\overline{\mathfrak{D}}$.
\end{proof}

\begin{proposition}\label{self-adjoint operator}
Both $\mathfrak{D}$ and $\overline{\mathfrak{D}}$ are unbounded self-adjoint operators acting on the Hilbert space $\,\overline{\mathcal{E}
\otimes_\mathcal{A}\mathcal{E}}=L^2(\mathcal{A},\tau)^{N^2}$.
\end{proposition}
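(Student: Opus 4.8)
The plan is to show that, under the unitary identification $\overline{\mathcal E\otimes_{\mathcal A}\mathcal E}\cong L^2(\mathcal A,\tau)\otimes\mathbb C^N\otimes\mathbb C^N$ furnished by Proposition~\ref{an isomorphism of Hilbert spaces}, the operators $\mathfrak D$ and $\overline{\mathfrak D}$ of \eqref{two Dirac operators} are, up to an inert tensor factor and a harmless sign, just copies of the original operator $D$, and then to read off their (essential) self-adjointness from the standing hypothesis that $D$ is essentially self-adjoint.

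First I would make the left-hand sides of \eqref{two Dirac operators} explicit. Since $e_j=1\otimes e_j\in\mathcal A\otimes\mathbb C^N$, the action $(da).e_j$ appearing in $c$ is simply the operator action of $[D,a]$, so that
\[
(da).e_j=[D,a]\bigl(1\otimes e_j\bigr)=\sum_{r=1}^{2k}\partial_r(a)\otimes\gamma_r e_j=D\bigl(a\otimes e_j\bigr),
\]
which is exactly the expression used in the proof of Lemma~\ref{symmetric operator}. For the right action $e_i.(da)=J(da)^*J^*(e_i)$ occurring in $\overline c$, a short computation using $(da)^*=-da^*$, the identity $\partial_r(a^*)=\partial_r(a)^*$ (which holds because each $\alpha_g$ is a $*$-automorphism), and the Clifford and charge-conjugation relations $\gamma_r^*=-\gamma_r$, $J_N\gamma_r=\varepsilon'\gamma_rJ_N$, $J_N^2=\varepsilon$ of Proposition~\ref{Clifford representation} — the same bookkeeping already carried out in the proof of Lemma~\ref{symmetric operator} — gives $e_i.(da)=\pm D(a\otimes e_i)$, the precise sign being irrelevant below. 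Substituting these into \eqref{two Dirac operators} and writing a generic vector of $\mathcal A\otimes\mathbb C^N\otimes\mathbb C^N$ with the first $\mathbb C^N$ recording the ``$e_i$'' label and the second the ``$e_j$'' label, one obtains, after an obvious reshuffling of the tensor legs,
\[
\mathfrak D\ \cong\ \mathbf 1_{\mathbb C^N}\otimes D\,,\qquad \overline{\mathfrak D}\ \cong\ (\pm D)\otimes\sigma\,,
\]
as operators on $L^2(\mathcal A,\tau)^{N^2}$ with common domain the algebraic module $\mathcal A\otimes\mathbb C^{N^2}$, where $\sigma$ is the bounded self-adjoint $\mathbb Z_2$-grading on $\mathbb C^N$.

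It then remains only to conclude. By the running assumption the candidate of this section actually determines an honest $N=1$ spectral datum, so $D=\sum_r\partial_r\otimes\gamma_r$ is essentially self-adjoint on $\mathcal A\otimes\mathbb C^N$; write $\overline D$ for its closure. The tensor product of a (essentially) self-adjoint operator with a bounded self-adjoint operator acting on a complementary Hilbert-space factor is again (essentially) self-adjoint — for $\sigma$ this is immediate, since $\sigma^2=1$ splits $\mathbb C^N$ into its $(\pm1)$-eigenspaces and $(\pm D)\otimes\sigma$ becomes an orthogonal direct sum of copies of $\pm\overline D$. Hence both $\mathfrak D$ and $\overline{\mathfrak D}$ are essentially self-adjoint, i.e.\ their closures are self-adjoint operators on $\overline{\mathcal E\otimes_{\mathcal A}\mathcal E}=L^2(\mathcal A,\tau)^{N^2}$, and they are unbounded because $\overline D$ is. The only step that is not purely formal is the identification in the second paragraph: one must verify that the abstract maps $c$ and $\overline c$, composed with the tensored Grassmannian connection $\widetilde\nabla$, genuinely collapse to copies of $D$ on the Hilbert space of Proposition~\ref{an isomorphism of Hilbert spaces}; once the charge-conjugation signs have been tracked this is routine, and no further analytic input about the $C^*$-dynamical system is required. (An alternative that avoids any appeal to $D$ would be to observe that the joint smooth vectors of the abelian $G$-action form a common core of analytic vectors for $\mathfrak D$ and $\overline{\mathfrak D}$ and to apply Nelson's theorem directly on $\widetilde{\mathcal H}$, but the reduction to $D$ is shorter.)
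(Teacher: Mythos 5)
Your proposal is correct and follows essentially the same route as the paper: identify $\overline{\mathcal{E}\otimes_\mathcal{A}\mathcal{E}}$ with $L^2(\mathcal{A},\tau)\otimes\mathbb{C}^N\otimes\mathbb{C}^N$, compute that $\mathfrak{D}=1_N\otimes D$ and $\overline{\mathfrak{D}}=-\varepsilon^\prime D\otimes\sigma$ (your $\pm$ sign is exactly the paper's $-\varepsilon^\prime$, tracked via $J$ and $(da)^*=-da^*$, together with the relocation $e_i\otimes a_{ij}e_j=a_{ij}e_i\otimes e_j$), and then inherit self-adjointness from the standing assumption that $D$ is self-adjoint on $L^2(\mathcal{A},\tau)\otimes\mathbb{C}^N$. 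Your added remark about splitting $\sigma$ into its $\pm1$-eigenspaces, and the care about essential versus genuine self-adjointness, are slightly more explicit than the paper but do not change the argument.
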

\begin{proof}
Observe that for any $\,\xi=e_i\otimes a_{ij}e_j\in\mathcal{E}\otimes_\mathcal{A}\mathcal{E}$, we can write $$\xi=\underbrace{(0,\ldots,
\underbrace{(0,\ldots,a_{ij},\ldots,0)}_{i-th\,\,place\,,\,N\,\,tuple},\ldots,0)}_{N\,\,tuple}\,\in\mathcal{A}^{N^2}$$ and hence,
\begin{eqnarray*}
\mathfrak{D}(\xi) & = & (0,\ldots,\underbrace{\sum_{r=1}^{2k}\partial_r(a_{ij})\otimes\gamma_r(e_j)}_{\in\,L^2(\mathcal{A},\tau)^N},\ldots,0)\,\in
L^2(\mathcal{A},\tau)^{N^2}\\
& = & e_i\otimes D(a_{ij}e_j)
\end{eqnarray*}
Now,
\begin{eqnarray*}
\overline{\mathfrak{D}}(\xi) & = & e_i\,.\,da_{ij}\otimes\sigma e_j\\
& = & -\varepsilon Jd(a_{ij}^*)Je_i\otimes\sigma e_j\\
& = & -\varepsilon^\prime\left(\sum_{r=1}^{2k}\partial_r(a_{ij})\otimes\gamma_r(e_i)\right)\otimes\sigma e_j\\
& = & -\varepsilon^\prime D(a_{ij}e_i)\otimes\sigma e_j
\end{eqnarray*}
and observe that
\begin{eqnarray*}
e_i\otimes a_{ij}e_j & = & e_i\,.\,a_{ij}\otimes e_j\\
& = & Ja_{ij}^*J^*e_i\otimes e_j\\
& = & \varepsilon Ja_{ij}^*(1\otimes J_Ne_i)\otimes e_j\\
& = & \varepsilon(a_{ij}\otimes J_N^2e_i)\otimes e_j\\
& = & a_{ij}e_i\otimes e_j\,.\\
\end{eqnarray*}
Since, $\overline{\mathcal{E}\otimes_\mathcal{A}\mathcal{E}}\cong L^2(\mathcal{A},\tau)^{N^2}$ (Propn. [\ref{an isomorphism of Hilbert spaces}]),
we see that the operator $\mathfrak{D}$ is of the form $1_N\otimes D$ and the operator $\overline{\mathfrak{D}}$ is of the form $-\varepsilon^\prime
D\otimes\sigma$, both acting on $L^2(\mathcal{A},\tau)^{N^2}$. That is,
\begin{center}
$\mathfrak{D}=\sum_{j=1}^{2k}\partial_j\otimes 1_N\otimes\gamma_j\quad$ and $\quad\overline{\mathfrak{D}}=-\varepsilon^\prime\sum_{j=1}^{2k}
\partial_j\otimes\gamma_j\otimes\sigma$
\end{center}
acting on $L^2(\mathcal{A},\tau)^{N^2}\cong L^2(\mathcal{A},\tau)\otimes\mathbb{C}^N\otimes\mathbb{C}^N$. Since, we have assumed that the
$C^*$-dynamical system $(\mathcal{A},G,\alpha,\tau)$ gives us an honest $N=1$ spectral data $\left(\mathcal{A},L^2(\mathcal{A},\tau)\otimes\mathbb{C}^N,
D=\sum_{j=1}^{2k}\partial_j\otimes\gamma_j\right)$; $D$ is self-adjoint on $\mathcal{H}=L^2(\mathcal{A},\tau)\otimes\mathbb{C}^N$. This proves the
self-adjointness of $\,\mathfrak{D}$ and $\overline{\mathfrak{D}}$.
\end{proof}

\begin{remark}\rm\label{why to keep epsilon}
Since we are dealing with even dimensional Lie groups, $\varepsilon^\prime=+1$ by the table mentioned in Propn. (\ref{Clifford representation}).
However, we intend not to discard $\varepsilon^\prime$ in the expression of $\overline{\mathfrak{D}}$ for the time being for a specific reason. This
will be explained towards the end of this section before our final theorem.
\end{remark}

\begin{lemma}\label{the required relations}
We have the relations $\,\mathfrak{D}^2=\overline{\mathfrak{D}}^{\,2}$ and $\{\mathfrak{D},\overline{\mathfrak{D}}\}=0$.
\end{lemma}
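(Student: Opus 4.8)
The plan is to verify both identities by brute expansion of the closed forms established in Proposition (\ref{self-adjoint operator}), namely
$$\mathfrak{D}=\sum_{j=1}^{2k}\partial_j\otimes 1_N\otimes\gamma_j\,,\qquad \overline{\mathfrak{D}}=-\varepsilon^\prime\sum_{j=1}^{2k}\partial_j\otimes\gamma_j\otimes\sigma$$
as operators on $L^2(\mathcal{A},\tau)\otimes\mathbb{C}^N\otimes\mathbb{C}^N$, where on the first $\mathbb{C}^N$-factor $\mathfrak{D}$ acts trivially while $\overline{\mathfrak{D}}$ acts by the Clifford generators, and on the second $\mathbb{C}^N$-factor the roles are reversed, with $\sigma=\gamma_V$ the grading. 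Every identity will be checked on the common core $\mathcal{E}\otimes_\mathcal{A}\mathcal{E}$ on which both operators are given by the explicit formulas (\ref{two Dirac operators}); since $\mathfrak{D}$ and $\overline{\mathfrak{D}}$ are self-adjoint (Proposition (\ref{self-adjoint operator})), operator identities valid on this core pass to the closures and no further domain discussion is needed.

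For the squares I would write $\mathfrak{D}^2=\sum_{j,\ell}\partial_j\partial_\ell\otimes 1_N\otimes\gamma_j\gamma_\ell$ and separate diagonal from off-diagonal terms. Because the Lie algebra is abelian, $\partial_j\partial_\ell=\partial_\ell\partial_j$, so pairing $(j,\ell)$ with $(\ell,j)$ turns the off-diagonal part into $\sum_{j<\ell}\partial_j\partial_\ell\otimes 1_N\otimes(\gamma_j\gamma_\ell+\gamma_\ell\gamma_j)$, which vanishes by the Clifford relation $\gamma_j\gamma_\ell+\gamma_\ell\gamma_j=-2\delta_{j\ell}$; the diagonal part is $\sum_j\partial_j^2\otimes 1_N\otimes\gamma_j^2=-\sum_j\partial_j^2\otimes 1_N\otimes 1_N$. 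Running the identical computation for $\overline{\mathfrak{D}}^{\,2}$, now using $(\varepsilon^\prime)^2=1$ and $\sigma^2=1$, again collapses the off-diagonal terms and produces $-\sum_j\partial_j^2\otimes 1_N\otimes 1_N$, so $\mathfrak{D}^2=\overline{\mathfrak{D}}^{\,2}$ (both being the expected ``Laplacian'').

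For the anticommutator I would multiply the two expressions in both orders, obtaining $\mathfrak{D}\overline{\mathfrak{D}}=-\varepsilon^\prime\sum_{j,\ell}\partial_j\partial_\ell\otimes\gamma_\ell\otimes\gamma_j\sigma$ and, after using $\partial_\ell\partial_j=\partial_j\partial_\ell$, $\overline{\mathfrak{D}}\mathfrak{D}=-\varepsilon^\prime\sum_{j,\ell}\partial_j\partial_\ell\otimes\gamma_\ell\otimes\sigma\gamma_j$, so that $\{\mathfrak{D},\overline{\mathfrak{D}}\}=-\varepsilon^\prime\sum_{j,\ell}\partial_j\partial_\ell\otimes\gamma_\ell\otimes(\gamma_j\sigma+\sigma\gamma_j)=0$, since $\sigma$ is the $\mathbb{Z}_2$-grading and hence anticommutes with every Clifford generator (the relation $\{\sigma,D\}=0$ unwinds to $\{\gamma_V,\gamma_j\}=0$ for all $j$). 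The single essential ingredient — and the only point where anything could fail — is the commutativity $[\partial_j,\partial_\ell]=0$ coming from $\mathfrak{g}=\mathbb{R}^{2k}$: for a non-abelian $\mathfrak{g}$ the off-diagonal terms would contribute $\partial_{[j,\ell]}$-type curvature terms that do not cancel against the Clifford anticommutators, and both $\mathfrak{D}^2=\overline{\mathfrak{D}}^{\,2}$ and $\{\mathfrak{D},\overline{\mathfrak{D}}\}=0$ would break down. Everything else is routine bookkeeping with the tensor factors.
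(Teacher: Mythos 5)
Your proof is correct and follows essentially the same route as the paper: expand the explicit tensor expressions $\mathfrak{D}=\sum_j\partial_j\otimes 1_N\otimes\gamma_j$ and $\overline{\mathfrak{D}}=-\varepsilon^\prime\sum_j\partial_j\otimes\gamma_j\otimes\sigma$, cancel the off-diagonal terms using the Clifford relations together with $[\partial_j,\partial_\ell]=0$, and reduce both squares to $-\sum_j\partial_j^2\otimes 1_N\otimes 1_N$. The only difference is that for $\{\mathfrak{D},\overline{\mathfrak{D}}\}=0$ the paper simply invokes $\{D,\sigma\}=0$, whereas your explicit expansion correctly makes visible that the commutativity $[\partial_j,\partial_\ell]=0$ is also being used there (the cross terms reduce to $-\varepsilon^\prime\sum_{j,\ell}[\partial_j,\partial_\ell]\otimes\gamma_\ell\otimes\gamma_j\sigma$) --- a useful extra precision, but not a different argument.
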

\begin{proof}
Since $\,\sigma$ is a $\mathbb{Z}_2$-grading operator on $(\mathcal{A},\mathcal{H},D)$, we have $\{D,\sigma\}=0$. This gives $\{\mathfrak{D},
\overline{\mathfrak{D}}\}=0$. Now,
\begin{eqnarray*}
\mathfrak{D}^2 & = & -\left(\sum_{r=1}^{2k}\partial_r^2\right)\otimes 1_N\otimes 1_N+\sum_{i<j}[\partial_i,\partial_j]\otimes 1_N\otimes\gamma_i\gamma_j\\
& = & -\left(\sum_{r=1}^{2k}\partial_r^2\right)\otimes 1_N\otimes 1_N+\sum_{i<j}\partial_{[i,j]}\otimes 1_N\otimes\gamma_i\gamma_j\\
& = & -\left(\sum_{r=1}^{2k}\partial_r^2\right)\otimes 1_N\otimes 1_N
\end{eqnarray*}
because $\,\mathfrak{g}$ is abelian. One gets exactly equal expression for $\overline{\mathfrak{D}}^{\,2}$. Hence, as operators on $L^2(\mathcal{A},
\tau)^{N^2}$ we get $\,\mathfrak{D}^2=\overline{\mathfrak{D}}^2$.
\end{proof}

\begin{remark}\rm
This is the place where we need $\,\mathfrak{g}\,$ is abelian to conclude that $\,\mathfrak{D}^2=\overline{\mathfrak{D}}^{\,2}$. Without this we can
not have $\,d^{\,2}=0$, where $\,d=\frac{1}{2}(\mathfrak{D}-i\,\overline{\mathfrak{D}})$.
\end{remark}

\begin{lemma}\label{required traciality}
We have the following :
\begin{itemize}
\item[(i)] For all $\,a\in\mathcal{A},\,[d,a]$ extends to a bounded operator acting on the Hilbert space $\,\overline{\mathcal{E}\otimes_\mathcal{A}
\mathcal{E}}$, where $\,d=\frac{1}{2}(\mathfrak{D}-i\,\overline{\mathfrak{D}})$.
\item[(ii)] $exp(-\varepsilon\mathfrak{D}^2)$ is a trace class operator for all $\,\varepsilon>0$.
\end{itemize}
\end{lemma}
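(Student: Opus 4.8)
The plan is to extract everything from the explicit descriptions of $\mathfrak{D},\overline{\mathfrak{D}}$ and the $\mathcal{A}$-action already obtained, so very little genuinely new work is needed. For part (i), since $d=\tfrac12(\mathfrak{D}-i\overline{\mathfrak{D}})$ it suffices to show that $[\mathfrak{D},a]$ and $[\overline{\mathfrak{D}},a]$ extend to bounded operators. First I would record, using the identity $e_i\otimes a_{ij}e_j=a_{ij}e_i\otimes e_j$ from the proof of Proposition \ref{self-adjoint operator}, that under the isomorphism of Proposition \ref{an isomorphism of Hilbert spaces} the left action of $a\in\mathcal{A}$ on $\overline{\mathcal{E}\otimes_\mathcal{A}\mathcal{E}}\cong L^2(\mathcal{A},\tau)\otimes\mathbb{C}^N\otimes\mathbb{C}^N$ is $L_a\otimes 1_N\otimes 1_N$, where $L_a$ is left multiplication on $L^2(\mathcal{A},\tau)$; this is bounded, being the GNS representation of the ambient $C^*$-algebra, with $\|L_a\|\le\|a\|$. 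Since each $\partial_j$ is a derivation one has $[\partial_j,L_a]=L_{\partial_j(a)}$ on the dense submodule $\mathcal{E}\otimes_\mathcal{A}\mathcal{E}$, so the closed forms of $\mathfrak{D},\overline{\mathfrak{D}}$ from Proposition \ref{self-adjoint operator} give
\[
[\mathfrak{D},a]=\sum_{j=1}^{2k}L_{\partial_j(a)}\otimes 1_N\otimes\gamma_j,\qquad
[\overline{\mathfrak{D}},a]=-\varepsilon^\prime\sum_{j=1}^{2k}L_{\partial_j(a)}\otimes\gamma_j\otimes\sigma .
\]
Each $\partial_j(a)$ is again in the smooth subalgebra $\mathcal{A}$ (smoothness of $a$), so $L_{\partial_j(a)}$ is bounded; hence the right-hand sides are finite sums of bounded operators, i.e. honest bounded operators on all of $\overline{\mathcal{E}\otimes_\mathcal{A}\mathcal{E}}$ agreeing with $[d,a]$ on a dense submodule, which yields the unique bounded extension asserted in (i).

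For part (ii), Lemma \ref{the required relations} already gives $\mathfrak{D}^2=-\bigl(\sum_{r=1}^{2k}\partial_r^2\bigr)\otimes 1_N\otimes 1_N$; the same computation (again using abelianness of $\mathfrak{g}$, so that the cross terms $[\partial_i,\partial_j]$ vanish) shows the Dirac operator of the given $N=1$ data satisfies $D^2=-\bigl(\sum_{r=1}^{2k}\partial_r^2\bigr)\otimes 1_N$ on $\mathcal{H}=L^2(\mathcal{A},\tau)\otimes\mathbb{C}^N$. Consequently $\mathfrak{D}^2=D^2\otimes 1_N$ as operators on $L^2(\mathcal{A},\tau)^{N^2}\cong\mathcal{H}\otimes\mathbb{C}^N$, and therefore $\exp(-\varepsilon\mathfrak{D}^2)=\exp(-\varepsilon D^2)\otimes 1_N$. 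Since the $N=1$ spectral data is $\varTheta$-summable by hypothesis, $\exp(-\varepsilon D^2)$ is trace class for every $\varepsilon>0$; tensoring with the identity $1_N$ of the finite-dimensional space $\mathbb{C}^N$ preserves traceclassness, with $\mathrm{Tr}\,\exp(-\varepsilon\mathfrak{D}^2)=N\cdot\mathrm{Tr}\,\exp(-\varepsilon D^2)<\infty$. This settles (ii).

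I do not expect a serious obstacle: the analytic heavy lifting was done in Propositions \ref{an isomorphism of Hilbert spaces} and \ref{self-adjoint operator} (the identification of $\overline{\mathcal{E}\otimes_\mathcal{A}\mathcal{E}}$ and the closed forms of $\mathfrak{D},\overline{\mathfrak{D}}$) and in Lemma \ref{the required relations} (the square identity, which is where abelianness of $\mathfrak{g}$ enters). The only points needing a little care are bookkeeping: the commutator identities must be computed first on the dense submodule $\mathcal{E}\otimes_\mathcal{A}\mathcal{E}$ and only then recognised as restrictions of globally bounded operators, and in (ii) one should keep in mind that it is precisely the $\varTheta$-summability assumption on the ambient $N=1$ data that delivers traceclassness of $\exp(-\varepsilon D^2)$.
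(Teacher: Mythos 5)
Your proposal is correct and follows essentially the same route as the paper: the paper's proof simply cites the $N=1$ assumption together with the explicit forms $\mathfrak{D}=1_N\otimes D$, $\overline{\mathfrak{D}}=-\varepsilon^\prime D\otimes\sigma$ from Proposition \ref{self-adjoint operator} and notes $\mathrm{Tr}(e^{-\varepsilon\mathfrak{D}^2})=N\,\mathrm{Tr}(e^{-\varepsilon D^2})$, which is exactly what you have spelled out in detail. Your explicit commutator formulas and the identification of the $\mathcal{A}$-action as $L_a\otimes 1_N\otimes 1_N$ are just the unpacked version of that one-line argument.
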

\begin{proof}
Both these facts follow from our assumption that the $C^*$-dynamical system $(\mathcal{A},G,\alpha,\tau)$ gives us an honest $N=1$ spectral data
$\left(\mathcal{A},\mathcal{H}=L^2(\mathcal{A},\tau)\otimes\mathbb{C}^N,D=\sum_{j=1}^{2k}\partial_j\otimes\gamma_j\right)$, and the explicit expressions
of $\mathfrak{D}$ and $\overline{\mathfrak{D}}$ in Propn. (\ref{self-adjoint operator}). Note that $\,Tr(exp(-\varepsilon\mathfrak{D}^2))=NTr(exp
(-\varepsilon D^2))$ for all $\,\varepsilon>0$.
\end{proof}

\begin{proposition}\label{N=(1,1) from dynamical system}
Let $G$ be an even dimensional, connected, abelian Lie group and $(\mathcal{A},G,\alpha,\tau)$ be a $C^*$-dynamical system equipped with a faithful
$G$-invariant trace $\tau$. If it determines a $N=1$ spectral data $(\mathcal{A},\mathcal{H},D,\sigma)$ then it always extends to $N=(1,1)$ spectral
data over $\mathcal{A}$.
\end{proposition}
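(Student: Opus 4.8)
The plan is to check, item by item, that the quintuple $(\mathcal{A},\widetilde{\mathcal{H}},d,\widetilde{\gamma},\star)$ coming out of the extension procedure of Section~$(2)$ satisfies Definition~\ref{N=(1,1) spectral data}. Here $\widetilde{\mathcal{H}}=\overline{\mathcal{E}\otimes_\mathcal{A}\mathcal{E}}$ for the dense free module $\mathcal{E}=\mathcal{A}\otimes\mathbb{C}^N$ carrying the Grassmannian connection $\nabla$, $\widetilde{\gamma}=\sigma\otimes\sigma$, $\star=1\otimes\sigma$, and $d=\tfrac12(\mathfrak{D}-i\overline{\mathfrak{D}})$, $d^*=\tfrac12(\mathfrak{D}+i\overline{\mathfrak{D}})$ with $\mathfrak{D}=c\circ\widetilde{\nabla}$, $\overline{\mathfrak{D}}=\overline{c}\circ\widetilde{\nabla}$. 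Most of the work is already packaged in the preceding results: Proposition~\ref{self-adjoint operator} gives the identification $\widetilde{\mathcal{H}}\cong L^2(\mathcal{A},\tau)^{N^2}$, the explicit forms $\mathfrak{D}=\sum_j\partial_j\otimes 1_N\otimes\gamma_j$, $\overline{\mathfrak{D}}=-\varepsilon'\sum_j\partial_j\otimes\gamma_j\otimes\sigma$, and their self-adjointness; Lemma~\ref{the required relations} gives $\mathfrak{D}^2=\overline{\mathfrak{D}}^{\,2}$ and $\{\mathfrak{D},\overline{\mathfrak{D}}\}=0$; and Lemma~\ref{required traciality} gives boundedness of $[d,a]$ and trace-class-ness of $\exp(-\varepsilon\mathfrak{D}^2)$. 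So the proposition amounts to assembling these facts against Definition~\ref{N=(1,1) spectral data}.

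First I would dispose of conditions $(1)$--$(2)$. The $\mathcal{A}$-action on $\widetilde{\mathcal{H}}\cong L^2(\mathcal{A},\tau)\otimes\mathbb{C}^{N^2}$ is $a\mapsto a\otimes 1$, which is faithful since the GNS representation of the faithful trace $\tau$ is faithful. By the equivalence recorded after Lemma~\ref{a crucial relation involving Hodge} between the $(d,d^*)$- and $(\mathfrak{D},\overline{\mathfrak{D}})$-pictures, self-adjointness of $\mathfrak{D},\overline{\mathfrak{D}}$ makes $d$ a densely defined closed operator on the common core of $\mathfrak{D}$ and $\overline{\mathfrak{D}}$; expanding $d^{2}=\tfrac14\bigl(\mathfrak{D}^2-\overline{\mathfrak{D}}^{\,2}-i\{\mathfrak{D},\overline{\mathfrak{D}}\}\bigr)$ and using Lemma~\ref{the required relations} gives $d^{2}=0$. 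Condition $(2)(b)$ is precisely Lemma~\ref{required traciality}(i), and for $(2)(c)$ one computes $\bigtriangleup=dd^*+d^*d=\tfrac12\bigl(\mathfrak{D}^2+\overline{\mathfrak{D}}^{\,2}\bigr)=\mathfrak{D}^2$ by Lemma~\ref{the required relations}, so $\exp(-\varepsilon\bigtriangleup)=\exp(-\varepsilon\mathfrak{D}^2)$ is trace class by Lemma~\ref{required traciality}(ii).

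Next I would verify the grading and Hodge conditions $(3)$--$(4)$. The operator $\widetilde{\gamma}=\sigma\otimes\sigma$ is a $\mathbb{Z}_2$-grading with $[\widetilde{\gamma},a]=0$ (as $[\sigma,a]=0$), and since this $\star$ satisfies $\star^2=1$ and $[\star,\sigma]=0$, Lemma~\ref{a crucial relation involving Hodge}(1) reduces $\{\widetilde{\gamma},d\}=0$ to $\{\widetilde{\gamma},\mathfrak{D}\}=\{\widetilde{\gamma},\overline{\mathfrak{D}}\}=0$, which I would read off from the explicit forms above using $\gamma_V\gamma_j=-\gamma_j\gamma_V$ and $\gamma_j\gamma_k=-\gamma_k\gamma_j$ $(j\neq k)$ from Proposition~\ref{Clifford representation}: on the $\mathbb{C}^N$-slot where $\mathfrak{D}$ (resp.\ $\overline{\mathfrak{D}}$) carries a $\gamma_j$, the corresponding factor of $\widetilde{\gamma}$ is a grading anticommuting with $\gamma_j$, while the other factors either agree or commute. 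Likewise $\star=1\otimes\sigma$ is unitary (being a self-adjoint involution on the $\mathbb{C}^N$-slot) with $[\star,a]=0$ since $a$ acts only on the $L^2$-factor, and $\star d=-d^*\star$ (phase $\zeta=-1$) follows from Lemma~\ref{a crucial relation involving Hodge}(2) once $\{\star,\mathfrak{D}\}=0$ and $[\star,\overline{\mathfrak{D}}]=0$ are checked — again a short Clifford-relation computation, $\star$ anticommuting with the $\gamma_j$-slot of $\mathfrak{D}$ and commuting with every slot of $\overline{\mathfrak{D}}$ (it merely squares the $\sigma$-slot).

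I do not anticipate a real obstacle here: the substantive content — the GNS identification with self-adjointness, the algebraic relations $\mathfrak{D}^2=\overline{\mathfrak{D}}^{\,2}$, $\{\mathfrak{D},\overline{\mathfrak{D}}\}=0$ (which is exactly where abelianness of $\mathfrak{g}=\mathbb{R}^{2k}$ enters, killing the curvature term $\sum_{i<j}\partial_{[i,j]}\otimes 1_N\otimes\gamma_i\gamma_j$), and the heat-kernel estimate — has already been extracted in Propositions~\ref{an isomorphism of Hilbert spaces},~\ref{self-adjoint operator} and Lemmas~\ref{the required relations},~\ref{required traciality}. The only points needing a little care are the domain bookkeeping in passing from the self-adjoint pair $(\mathfrak{D},\overline{\mathfrak{D}})$ to the closed operator $d$ (settled by the equivalence already stated in Section~$(2)$) and keeping straight which $\mathbb{C}^N$ tensor-slot each operator acts on so that the signs of Proposition~\ref{Clifford representation} make the relevant (anti)commutators involving $\widetilde{\gamma}$ and $\star$ vanish; in particular retaining $\varepsilon'$ in $\overline{\mathfrak{D}}$ as in Remark~\ref{why to keep epsilon} is harmless, since it only rescales $\overline{\mathfrak{D}}$ by a sign and cannot affect whether a (anti)commutator is zero.
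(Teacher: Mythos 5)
Your proposal is correct and follows essentially the same route as the paper's proof: it assembles Proposition~\ref{self-adjoint operator} and Lemmas~\ref{the required relations},~\ref{required traciality}, introduces the same grading $\widetilde{\gamma}=1\otimes\sigma\otimes\sigma$ and Hodge operator $\star=1\otimes 1_N\otimes\sigma$, verifies the same (anti)commutation relations from the Clifford identities, and concludes via Lemma~\ref{a crucial relation involving Hodge}. The extra details you spell out (the expansions $d^2=\tfrac14(\mathfrak{D}^2-\overline{\mathfrak{D}}^{\,2}-i\{\mathfrak{D},\overline{\mathfrak{D}}\})$ and $\bigtriangleup=\mathfrak{D}^2$, and faithfulness of the GNS representation) are exactly what the paper delegates to the general $(\mathfrak{D},\overline{\mathfrak{D}})\leftrightarrow(d,d^*)$ discussion in Section~2, so there is no substantive difference.
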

\begin{proof}
Combining Propn. (\ref{self-adjoint operator}) and Lemma (\ref{the required relations}\,,\,\ref{required traciality}) we see that the only
remaining part is to produce a $\,\mathbb{Z}_2$-grading and a Hodge operator. We have two self-adjoint unitaries $\gamma:=1\otimes\sigma\otimes 1_N$
and $\gamma^\prime:=1\otimes 1_N\otimes\sigma$ acting on the Hilbert space $L^2(\mathcal{A},\tau)^{N^2}=L^2(\mathcal{A},\tau)\otimes\mathbb{C}^N
\otimes\mathbb{C}^N$, satisfying
\begin{center}
$\{\mathfrak{D},\gamma^\prime\}=\{\overline{\mathfrak{D}},\gamma\}=0\quad,\quad[\mathfrak{D},\gamma]=[\,\overline{\mathfrak{D}},\gamma^\prime]=0\,\,.$
\end{center}
The $\mathbb{Z}_2$-grading is obtained by taking $\widetilde{\gamma}:=\gamma\gamma^\prime=1\otimes\sigma\otimes\sigma$. Clearly, $\{\widetilde{\gamma},
\mathfrak{D}\}=\{\widetilde{\gamma},\overline{\mathfrak{D}}\}=0$. Finally, the Hodge operator is given by $\,\star:=\gamma^\prime=1\otimes 1_N\otimes
\sigma$ acting on $\,\overline{\mathcal{E}\otimes_\mathcal{A}\mathcal{E}}=L^2(\mathcal{A},\tau)^{N^2}$, and it satisfies $\,\{\star,\mathfrak{D}\}=
[\star,\overline{\mathfrak{D}}\,]=0$ for the phase $\zeta=-1$. This concludes the proof in view of Lemma (\ref{a crucial relation involving Hodge}).
\end{proof}

\begin{lemma}\label{An operator related to d}
The following bounded self-adjoint operator
\begin{align*}
\mathcal{T}:L^2(\mathcal{A},\tau)\otimes\mathbb{C}^N\otimes\mathbb{C}^N &\longrightarrow L^2(\mathcal{A},\tau)\otimes\mathbb{C}^N\otimes\mathbb{C}^N\\
\mathcal{T} &:= \sum_{j=1}^{2k}\,\frac{i\varepsilon^\prime}{2}1\otimes\gamma_j\otimes\gamma_j\sigma
\end{align*}
commutes with all elements of $\mathcal{A}\subseteq\mathcal{B}\left(L^2(\mathcal{A},\tau)^{N^2}\right)$ and $[\mathcal{T},d\,]=d$, where $\,d=
\frac{1}{2}(\mathfrak{D}-i\overline{\mathfrak{D}})$.
\end{lemma}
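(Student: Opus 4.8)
The plan is to exploit the explicit expressions for $\mathfrak{D}$ and $\overline{\mathfrak{D}}$ obtained in Proposition (\ref{self-adjoint operator}), namely $\mathfrak{D}=\sum_{j=1}^{2k}\partial_j\otimes 1_N\otimes\gamma_j$ and $\overline{\mathfrak{D}}=-\varepsilon'\sum_{j=1}^{2k}\partial_j\otimes\gamma_j\otimes\sigma$, so that $d=\frac{1}{2}(\mathfrak{D}-i\overline{\mathfrak{D}})=\frac{1}{2}\sum_{j=1}^{2k}\partial_j\otimes(1_N\otimes\gamma_j+i\varepsilon'\gamma_j\otimes\sigma)$. Since $\mathcal{T}$ is built only out of the Clifford matrices $\gamma_j$, $\sigma$ and the identity on $L^2(\mathcal{A},\tau)$, it acts as a scalar on the $L^2(\mathcal{A},\tau)$ leg, while every element $a\in\mathcal{A}\subseteq\mathcal{B}(L^2(\mathcal{A},\tau)^{N^2})$ acts only on that leg; hence $[\mathcal{T},a]=0$ is immediate. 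Self-adjointness of $\mathcal{T}$ follows from $\gamma_j^*=-\gamma_j$, $\sigma^*=\sigma$, and $\{\gamma_j,\sigma\}=0$, which gives $(i\gamma_j\otimes\gamma_j\sigma)^*=-i(-\gamma_j)\otimes(\sigma)(-\gamma_j)=i\gamma_j\otimes\sigma\gamma_j=-i\gamma_j\otimes\gamma_j\sigma$ — wait, this needs care: I will recompute $(\gamma_j\sigma)^*=\sigma^*\gamma_j^*=-\sigma\gamma_j=\gamma_j\sigma$, so $\gamma_j\sigma$ is self-adjoint on $\mathbb{C}^N$, and thus $(i\gamma_j\otimes\gamma_j\sigma)^*=-i(-\gamma_j)\otimes\gamma_j\sigma=i\gamma_j\otimes\gamma_j\sigma$, confirming $\mathcal{T}=\mathcal{T}^*$ with the factor $\varepsilon'$ real.

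The main computation is $[\mathcal{T},d]=d$. First I would reduce to a purely Clifford-algebraic identity: since $\partial_j$ commutes with all the matrix factors, it suffices to show that for the matrix $A_j:=1_N\otimes\gamma_j+i\varepsilon'\gamma_j\otimes\sigma$ on $\mathbb{C}^N\otimes\mathbb{C}^N$ and $\mathcal{T}_0:=\frac{i\varepsilon'}{2}\sum_{\ell}\gamma_\ell\otimes\gamma_\ell\sigma$, one has $[\mathcal{T}_0,A_j]=A_j$ for each $j$. This is a finite-dimensional bracket computation using the anticommutation relations $\gamma_i\gamma_j+\gamma_j\gamma_i=-2\delta_{ij}$ and $\{\gamma_i,\sigma\}=0$, $\sigma^2=1$. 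Concretely I would expand $[\gamma_\ell\otimes\gamma_\ell\sigma,\,1_N\otimes\gamma_j]=\gamma_\ell\otimes[\gamma_\ell\sigma,\gamma_j]$ and $[\gamma_\ell\otimes\gamma_\ell\sigma,\,\gamma_j\otimes\sigma]=[\gamma_\ell,\gamma_j]\otimes\gamma_\ell\sigma\sigma\text{-}$pieces, separating the $\ell=j$ and $\ell\neq j$ terms; the $\ell\neq j$ contributions will cancel between the two halves of $A_j$ (this is exactly the role of the $i\varepsilon'$ coefficient and the grading $\sigma$), and the $\ell=j$ term will reproduce $A_j$ with coefficient $1$ after using $(\varepsilon')^2=1$ and $\gamma_j^2=-1$.

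The step I expect to be the main obstacle is bookkeeping the signs in the cross-terms $\ell\neq j$ to confirm the cancellation: one must carefully track that $[\gamma_\ell\sigma,\gamma_j]=\gamma_\ell\sigma\gamma_j-\gamma_j\gamma_\ell\sigma=-\gamma_\ell\gamma_j\sigma-\gamma_j\gamma_\ell\sigma=-\{\gamma_\ell,\gamma_j\}\sigma=2\delta_{\ell j}\sigma$, so in fact the commutator $[\gamma_\ell\sigma,\gamma_j]$ already vanishes for $\ell\neq j$, which simplifies the first bracket dramatically; the surviving cross-terms then come only from $[\gamma_\ell,\gamma_j]\otimes(\cdots)$ in the second bracket, and these must be shown to pair off or vanish. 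Once the Clifford identity $[\mathcal{T}_0,A_j]=A_j$ is verified, tensoring with $\partial_j$ and summing over $j$ gives $[\mathcal{T},d]=\frac{1}{2}\sum_j\partial_j\otimes[\mathcal{T}_0,A_j]=\frac{1}{2}\sum_j\partial_j\otimes A_j=d$, and boundedness of $\mathcal{T}$ is clear since it is a finite sum of operators $1\otimes(\text{unitary up to scalar})$. I would close by remarking that a parallel identity $[\overline{\mathcal{T}},\overline{\partial}]=\overline{\partial}$ will be needed later for the full $N=(2,2)$ data, but for this lemma only the statement above is required.
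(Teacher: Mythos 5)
Your proposal is correct and follows essentially the same route as the paper: both reduce $[\mathcal{T},d]=d$ to the Clifford relations $\{\gamma_\ell,\gamma_j\}=-2\delta_{\ell j}$ and $\{\gamma_j,\sigma\}=0$ applied to the explicit tensor expressions $\mathfrak{D}=\sum_j\partial_j\otimes 1_N\otimes\gamma_j$ and $\overline{\mathfrak{D}}=-\varepsilon'\sum_j\partial_j\otimes\gamma_j\otimes\sigma$, with both types of cross-terms $\ell\neq j$ vanishing because each bracket produces an anticommutator of gamma matrices. Your factoring of the computation into the matrix identity $[\mathcal{T}_0,A_j]=A_j$ is only a cosmetic reorganization of the paper's single double-sum calculation, and your verifications of self-adjointness and of commutation with $\mathcal{A}$ (which acts on the $L^2(\mathcal{A},\tau)$ leg alone) are sound.
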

\begin{proof}
Recall that $L^2(\mathcal{A},\tau)^{N^2}=\overline{\mathcal{E}\otimes_\mathcal{A}\mathcal{E}}$ (Propn. [\ref{an isomorphism of Hilbert spaces}]) and
$\mathcal{A}$ is represented on $\overline{\mathcal{E}\otimes_\mathcal{A}\mathcal{E}}$ by its left action on $\,\mathcal{E}$. Clearly, $\mathcal{T}$
then commutes with $\mathcal{A}\subseteq\mathcal{B}\left(L^2(\mathcal{A},\tau)^{N^2}\right)$. Recall the expressions for $\mathfrak{D}$ and
$\overline{\mathfrak{D}}$ from Propn. (\ref{self-adjoint operator}). We have the following
\begin{eqnarray*}
&   & [\,-\varepsilon^\prime\mathcal{T}\,,\,\mathfrak{D}-i\overline{\mathfrak{D}}\,]\\
& = & \sum_{j=1}^{2k}\,\left[\frac{1}{2i}1\otimes\gamma_j\otimes\gamma_j\sigma\,,\,\mathfrak{D}-i\overline{\mathfrak{D}}\,\right]\\
& = & \sum_{j=1}^{2k}\,\frac{1}{2i}[1\otimes\gamma_j\otimes\gamma_j\sigma\,,\,1\otimes D]+\frac{\varepsilon^\prime}{2}[1\otimes\gamma_j\otimes
\gamma_j\sigma\,,\,D\otimes\sigma]\\
& = & \sum_{j,r=1}^{2k}\,\frac{1}{2i}[1\otimes\gamma_j\otimes\gamma_j\sigma\,,\,\partial_r\otimes1_N\otimes\gamma_r]+\frac{\varepsilon^\prime}{2}[1\otimes
\gamma_j\otimes\gamma_j\sigma\,,\,\partial_r\otimes\gamma_r\otimes\sigma]\\
& = & \sum_{j,r=1}^{2k}\,\frac{1}{2i}\partial_r\otimes\gamma_j\otimes(\gamma_j\sigma\gamma_r-\gamma_r\gamma_j\sigma)+\frac{\varepsilon^\prime}{2}
\partial_r\otimes(\gamma_j\gamma_r+\gamma_r\gamma_j)\otimes\gamma_j\\
& = & \sum_{j,r=1}^{2k}\,\frac{1}{2i}\partial_r\otimes\gamma_j\otimes(-\gamma_j\gamma_r-\gamma_r\gamma_j)\sigma+\frac{\varepsilon^\prime}{2}
\partial_r\otimes(\gamma_j\gamma_r+\gamma_r\gamma_j)\otimes\gamma_j\\
& = & \sum_{j=1}^{2k}\,\frac{1}{i}\partial_j\otimes\gamma_j\otimes\sigma-\varepsilon^\prime\partial_j\otimes 1_N\otimes\gamma_j\\
& = & \frac{1}{i}(D\otimes\sigma)-\varepsilon^\prime(1\otimes D)\\
& = & i\varepsilon^\prime\overline{\mathfrak{D}}-\varepsilon^\prime\mathfrak{D}\\
& = & -\varepsilon^\prime(\mathfrak{D}-i\overline{\mathfrak{D}})\,.
\end{eqnarray*}
Hence, for $\,d=\frac{1}{2}(\mathfrak{D}-i\overline{\mathfrak{D}})$ we see that $[\mathcal{T},d\,]=d$.
\end{proof}

\begin{proposition}\label{sufficient condition for complex structure}
If there exists a skew-adjoint matrix $\widetilde{\mathcal{I}}\in M_{N^2}(\mathbb{C})$ such that the bounded anti-selfadjoint operator $\,\mathcal{I}
=1\otimes\widetilde{\mathcal{I}}$ acting on $L^2(\mathcal{A},\tau)\otimes\mathbb{C}^{\,N^2}$ satisfy
\begin{enumerate}
\item $[\,\mathcal{I},\mathcal{T}]=0$
\item $[\,\mathcal{I},\widetilde{\gamma}]=0$
\item $[\,\mathcal{I},\star]=0$
\item $[\,\mathcal{I},[\,\mathcal{I},d\,]]=-d$
\end{enumerate}
then the $N=(1,1)$ spectral data obtained in Propn. $(\ref{N=(1,1) from dynamical system})$ extends to Hermitian spectral data over $\mathcal{A}$,
i,e. $\mathcal{A}$ inherits complex structure.
\end{proposition}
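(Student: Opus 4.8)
The plan is to split the $N=(1,1)$ operator $d$ of Proposition \ref{N=(1,1) from dynamical system} and the operator $\mathcal{T}$ of Lemma \ref{An operator related to d} into ``holomorphic'' and ``antiholomorphic'' parts by means of $\mathcal{I}$, keeping the grading $\widetilde{\gamma}$ and the Hodge operator $\star$ of Proposition \ref{N=(1,1) from dynamical system} unchanged. Concretely, I would set
$$\partial:=\tfrac12\big(d-i[\mathcal{I},d]\big)\,,\qquad\overline{\partial}:=\tfrac12\big(d+i[\mathcal{I},d]\big)\,,\qquad T:=\tfrac12\big(\mathcal{T}-i\mathcal{I}\big)\,,\qquad\overline{T}:=\tfrac12\big(\mathcal{T}+i\mathcal{I}\big)\,,$$
so that $\partial+\overline{\partial}=d$ and $T+\overline{T}=\mathcal{T}$. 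Since $\mathcal{I}$ is bounded and anti-selfadjoint, $i\mathcal{I}$ is bounded self-adjoint, hence $T,\overline{T}$ are bounded self-adjoint, while $\partial,\overline{\partial}$ are densely defined and closed because $d$ is closed and $\mathcal{I}=1\otimes\widetilde{\mathcal{I}}$ is bounded and preserves $\mathrm{dom}(d)$. Condition $(1)$ of Definition \ref{N=(2,2) spectral data} is then precisely Proposition \ref{N=(1,1) from dynamical system}, so it remains to verify $(2)$--$(5)$ for the octuple $(\mathcal{A},\overline{\mathcal{E}\otimes_{\mathcal{A}}\mathcal{E}},\partial,\overline{\partial},T,\overline{T},\widetilde{\gamma},\star)$.

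First I would record the four ``weight'' relations $[\mathcal{I},\partial]=i\partial$, $[\mathcal{I},\overline{\partial}]=-i\overline{\partial}$ (immediate from hypothesis $(4)$) and $[\mathcal{T},\partial]=\partial$, $[\mathcal{T},\overline{\partial}]=\overline{\partial}$ (from $[\mathcal{T},d]=d$ in Lemma \ref{An operator related to d}, hypothesis $(1)$, and the Jacobi identity). Granting these, most of Definition \ref{N=(2,2) spectral data} is bracket bookkeeping: for $(2a)$--$(2b)$, applying $\mathrm{ad}_{\mathcal{I}}$ once and then twice to $d^2=\partial^2+\{\partial,\overline{\partial}\}+\overline{\partial}^{\,2}=0$ separates the three summands, whose $\mathrm{ad}_{\mathcal{I}}$-weights are $2i,0,-2i$, forcing $\partial^2=\overline{\partial}^{\,2}=\{\partial,\overline{\partial}\}=0$; $(2c)$ is $[T,\overline{T}]=\tfrac i2[\mathcal{T},\mathcal{I}]=0$ by $(1)$; $(2d)$--$(2g)$ are read off the four weight relations; the first half of $(3)$ holds since $[\mathcal{T},a]=0$ (Lemma \ref{An operator related to d}) and $[\mathcal{I},a]=0$ (because $a$ acts as $M_a\otimes1$ while $\mathcal{I}=1\otimes\widetilde{\mathcal{I}}$); $(4)$ follows from $\{\widetilde{\gamma},d\}=0$ and $[\widetilde{\gamma},\mathcal{T}]=0$ (a one-line check with $\widetilde{\gamma}=1\otimes\sigma\otimes\sigma$ and the explicit $\mathcal{T}$) together with hypothesis $(2)$; and $(5)$ with phase $\zeta=-1$ follows by conjugating the defining formulas for $\partial,\overline{\partial}$ by $\star$ and using $\star d=-d^*\star$ (Lemma \ref{a crucial relation involving Hodge}, applicable by Proposition \ref{N=(1,1) from dynamical system}) and hypothesis $(3)$.

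The one step that is not purely formal is the second half of condition $(3)$, the boundedness of $[\partial,a]$, $[\overline{\partial},a]$ and $\{\partial,[\overline{\partial},a]\}$. The first two are easy: $[\partial,a]=\tfrac12\big([d,a]-i[\mathcal{I},[d,a]]\big)$ by Jacobi and $[\mathcal{I},a]=0$, $[d,a]$ is bounded by the $N=(1,1)$ axioms, and $\mathcal{I}$ is bounded. For $\{\partial,[\overline{\partial},a]\}$ I would use the concrete picture of Proposition \ref{self-adjoint operator}: there $d=\sum_j\partial_j\otimes\Gamma_j$ for constant matrices $\Gamma_j\in M_{N^2}(\mathbb{C})$, so $\partial=\sum_j\partial_j\otimes P_j$ and $\overline{\partial}=\sum_j\partial_j\otimes\overline{P}_j$ with $P_j,\overline{P}_j$ the $\pm i$-eigenparts of $\Gamma_j$ under $\mathrm{ad}_{\widetilde{\mathcal{I}}}$; and since $a$ acts as $M_a\otimes1$ and $[\partial_j,M_a]=M_{\partial_j(a)}$, one has $[\overline{\partial},a]=\sum_j M_{\partial_j(a)}\otimes\overline{P}_j$. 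Expanding $\{\partial,[\overline{\partial},a]\}$ and moving all the $\partial_i$ to the right yields $\sum_{i,j}M_{\partial_j(a)}\,\partial_i\otimes\{P_i,\overline{P}_j\}+\sum_{i,j}M_{\partial_i\partial_j(a)}\otimes P_i\overline{P}_j$; the second term is bounded, so the whole expression is bounded exactly when $\{P_i,\overline{P}_j\}=0$ for all $i,j$ --- the noncommutative avatar of the classical fact that wedging twice with $1$-forms always anticommutes, making $\{\partial,[\overline{\partial},a]\}$ the bounded ``Clifford multiplication by $\partial\overline{\partial}a$''. Establishing $\{P_i,\overline{P}_j\}=0$ is the heart of the argument and the main obstacle: from $d^2=0$ one gets $\{\Gamma_i,\Gamma_j\}=0$ for all $i,j$ (by linear independence of the $\partial_i\partial_j$), and $(2a)$--$(2b)$ yield $\{P_i,P_j\}=\{\overline{P}_i,\overline{P}_j\}=0$ and $\{P_i,\overline{P}_j\}+\{P_j,\overline{P}_i\}=0$; the leftover antisymmetric part has to be eliminated by bringing in the compatibility of $\widetilde{\mathcal{I}}$ with $\mathcal{T}$, $\widetilde{\gamma}$ and $\star$ forced by hypotheses $(1)$--$(3)$, which constrains $\mathrm{ad}_{\widetilde{\mathcal{I}}}$ on the span of the $\Gamma_j$. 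Once $\{P_i,\overline{P}_j\}=0$ is in hand, conditions $(1)$--$(5)$ all hold and the resulting octuple is the asserted Hermitian spectral data.
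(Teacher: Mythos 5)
Your construction coincides with the paper's: the paper also sets $d_2=[\,\mathcal{I},d\,]$, $\partial=\tfrac12(d-id_2)$, $\overline{\partial}=\tfrac12(d+id_2)$, $T=\tfrac12(\mathcal{T}-i\mathcal{I})$, $\overline{T}=\tfrac12(\mathcal{T}+i\mathcal{I})$, and keeps $\widetilde{\gamma}$ and $\star$ unchanged. Your verification of conditions $(2)$, $(4)$ and $(5)$ of Definition \ref{N=(2,2) spectral data} is a cleaner but equivalent packaging of the paper's computations: where you separate $d^2=\partial^2+\{\partial,\overline{\partial}\}+\overline{\partial}^{\,2}=0$ into $\mathrm{ad}_{\mathcal{I}}$-weight components, the paper proves $d_2^{\,2}=0$ directly (multiplying $\mathcal{I}^2d-2\mathcal{I}d\mathcal{I}+d\mathcal{I}^2=-d$ by $d$ on either side) and observes $\{d,d_2\}=0$; and where you read $(2d)$--$(2g)$ off the weight relations $[\mathcal{I},\partial]=i\partial$, $[\mathcal{T},\partial]=\partial$, the paper expands $[T,\partial]$ and shows that $[T,d_2]=i\partial$ and $[\,\overline{T},d_2]=-i\overline{\partial}$ are jointly equivalent to hypothesis $(4)$. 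These are the same argument in different clothing, and your checks of $(2c)$, the first half of $(3)$, $(4)$ and $(5)$ match the paper's line for line.

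The one genuine divergence is the boundedness of $\{\partial,[\,\overline{\partial},a]\}$. The paper's proof checks only $[\partial,a]$ and $[\,\overline{\partial},a]$ and then declares condition $(3)$ verified; the anticommutator is never mentioned. You are right that it is not controlled by the abstract bracket identities alone: writing $d=\sum_j\partial_j\otimes\Gamma_j$ one finds $\{\partial,[\,\overline{\partial},a]\}=-\tfrac{i}{2}\{d_2,[d,a]\}$, whose unbounded part is $\sum_{i,j}M_{\partial_i(a)}\,\partial_j\otimes\{[\widetilde{\mathcal{I}},\Gamma_j],\Gamma_i\}$, and $\{d,d_2\}=0$ only kills the part of $\{[\widetilde{\mathcal{I}},\Gamma_j],\Gamma_i\}$ symmetric in $(i,j)$ --- exactly the antisymmetric leftover you isolate as $\{P_i,\overline{P}_j\}\neq 0$. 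But at this point you stop: the claim that hypotheses $(1)$--$(3)$ ``eliminate the leftover antisymmetric part'' is precisely what needs proof, and you do not supply it, so the step you yourself call the heart of the argument is left open. To be fair, the paper's proof shares this omission while you at least locate it; and in the only application of the proposition (Theorem \ref{N=(2,2) from dynamical system}) the chosen $\widetilde{\mathcal{I}}$ satisfies $[\widetilde{\mathcal{I}},\Gamma_j]\in\mathrm{span}\{\Gamma_1,\ldots,\Gamma_{2k}\}$ while all the $\Gamma_j$ pairwise anticommute and square to zero, so $\{P_i,\overline{P}_j\}=0$ holds there by inspection. To close the abstract statement you must either derive $\{[\widetilde{\mathcal{I}},\Gamma_j],\Gamma_i\}=0$ from hypotheses $(1)$--$(4)$ or add it (equivalently, boundedness of $\{\partial,[\,\overline{\partial},a]\}$) as an explicit hypothesis.
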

\begin{proof}
We want to write $\,d:=\frac{1}{2}(\mathfrak{D}-i\overline{\mathfrak{D}})$ as $\,\partial+\overline{\partial}$ where both $\,\partial,\,\overline{\partial}$
are differentials and $\mathcal{T}=T+\overline{T}$ such that all the conditions in Defn. (\ref{N=(2,2) spectral data}) except $(6)$ are satisfied.
Consider the densely defined operator $\,d_2=[\,\mathcal{I},d\,]$ such that $[\,\mathcal{I},d_2]=-d$. This gives $\mathcal{I}^2d-2\mathcal{I}
d\mathcal{I}+d\mathcal{I}^2=-d$. Hence, $\mathcal{I}d\mathcal{I}d=\frac{1}{2}d\mathcal{I}^2d=d\mathcal{I}d\mathcal{I}$. Then,
\begin{eqnarray*}
d_2^2 & = & [\mathcal{I},d][\mathcal{I},d]\\
& = & \mathcal{I}d\mathcal{I}d-d\mathcal{I}^2d+d\mathcal{I}d\mathcal{I}\\
& = & 0
\end{eqnarray*}
i,e. $\,d_2$ is a differential. Now, define
\begin{center}
$\partial:=\frac{1}{2}(d-id_2)\quad$ and $\quad\overline{\partial}:=\frac{1}{2}(d+id_2)\,.$
\end{center}
Then, $\,d=\partial+\overline{\partial}$ and part $(1)$ in Defn. (\ref{N=(2,2) spectral data}) holds. Observe that $\{d,d_2\}=0$. Both $d$ and $d_2$
are anticommuting differentials shows that both $\partial$ and $\overline{\partial}$ are differentials. It is easy to check that $\{\partial,
\overline{\partial}\}=0$. Now, define
\begin{center}
$T:=\frac{1}{2}(\mathcal{T}-i\mathcal{I})\quad$ and $\quad\overline{T}:=\frac{1}{2}(\mathcal{T}+i\mathcal{I})\,.$
\end{center}
Then $\mathcal{T}=T+\overline{T}$ and $[T,\overline{T}]=\frac{i}{2}[\mathcal{T},\mathcal{I}]=0$. Now,
\begin{eqnarray*}
[T,\partial] & = & \frac{1}{4}([\mathcal{T},d]-i[\mathcal{T},d_2]-i[\mathcal{I},d]-[\mathcal{I},d_2])\\
& = & \frac{1}{4}(d-id_2-i[\mathcal{T}-i\mathcal{I},d_2])\\
& = & \frac{1}{2}\partial-\frac{i}{2}[T,d_2]\,.
\end{eqnarray*}
Similarly, one can show that
\begin{align*}
[\overline{T},\partial] &= \frac{1}{2}\overline{\partial}-\frac{i}{2}[\overline{T},d_2]\,,\\
[T,\overline{\partial}] &= \frac{1}{2}\partial+\frac{i}{2}[T,d_2]\,,\\
[\overline{T},\overline{\partial}] &= \frac{1}{2}\overline{\partial}+\frac{i}{2}[\overline{T},d_2]\,.\\
\end{align*}
Now, by Lemma (\ref{An operator related to d}) we know that $[\mathcal{T},d\,]=d$. Hence,
\begin{eqnarray*}
[T,d_2] & = & \frac{1}{2}(\mathcal{T}\mathcal{I}d-\mathcal{T}d\mathcal{I}-\mathcal{I}d\mathcal{T}+d\mathcal{I}\mathcal{T}-i[\mathcal{I},[\mathcal{I},d]])\\
& = & \frac{1}{2}(\mathcal{I}[\mathcal{T},d]-[\mathcal{T},d]\mathcal{I}-i[\mathcal{I},[\mathcal{I},d]]\\
& = & \frac{1}{2}(\mathcal{I}d-d\mathcal{I}-i[\mathcal{I},[\mathcal{I},d]])\\
& = & \frac{1}{2}(d_2-i[\mathcal{I},[\mathcal{I},d]])\,.
\end{eqnarray*}
Similarly, one can show that
\begin{eqnarray*}
[\overline{T},d_2] & = & \frac{1}{2}(d_2+i[\mathcal{I},[\mathcal{I},d]])\,.
\end{eqnarray*}
Hence, the following two relations
\begin{center}
$[T,d_2]=i\partial\quad$ and $\quad[\,\overline{T},d_2]=-i\overline{\partial}$
\end{center}
together is equivalent to
\begin{center}
$[\,\mathcal{I},[\,\mathcal{I},d]]=-d\,.$
\end{center}
This shows that part $(2)$ in Defn. (\ref{N=(2,2) spectral data}) holds. Both $\mathcal{I}$ and $\mathcal{T}$ commuting with $\mathcal{A}$ proves
that $[T,a]=[\overline{T},a]=0$ for all $a\in\mathcal{A}$. Now,
\begin{eqnarray*}
[d_2,a] & = & [[\,\mathcal{I},d],a]\\
& = & [\,\mathcal{I},[d,a]]
\end{eqnarray*}
Since $[d,a]$ extends to a bounded operator, we get that both $[\partial,a]$ and $[\overline{\partial},a]$ extends to bounded operators for all
$a\in\mathcal{A}$. This shows that part $(3)$ in Defn. (\ref{N=(2,2) spectral data}) holds. Now,
\begin{eqnarray*}
\{\widetilde\gamma,d_2\} & = & \widetilde\gamma[\mathcal{I},d]+[\mathcal{I},d]\widetilde\gamma\\
& = & \mathcal{I}\{\widetilde\gamma,d\}-\{\widetilde\gamma,d\}\mathcal{I}\\
& = & 0
\end{eqnarray*}
since, $\{\widetilde\gamma,d\}=0$. This shows that $\{\widetilde\gamma,\partial\}=\{\widetilde\gamma,\overline{\partial}\}=0$ i,e. part $(4)$ in
Defn. (\ref{N=(2,2) spectral data}) holds. Finally, observe that
\begin{align*}
\star\partial+\overline{\partial}^*\star &= -i(\star d_2+d_2^*\star)\\
\star\overline{\partial}+\partial^*\star &= i(\star d_2+d_2^*\star)
\end{align*}
Now, using the fact that $\mathcal{I}$ is anti-selfadjoint we see that
\begin{eqnarray*}
\star d_2+d_2^*\star & = & \star[\mathcal{I},d]+[\mathcal{I},d]^*\star\\
& = & \star\mathcal{I}d-\star d\mathcal{I}+d^*\mathcal{I}^*\star-\mathcal{I}^*d^*\star\\
& = & \mathcal{I}(\star d+d^*\star)-(\star d+d^*\star)\mathcal{I}\\
& = & 0
\end{eqnarray*}
which shows that part $(5)$ in Defn. (\ref{N=(2,2) spectral data}) holds for the phase $\zeta=-1$. Hence, existence of such suitable anti-selfadjoint
operator $\mathcal{I}$ guarantees that the $N=(1,1)$ spectral data obtained in Propn. (\ref{N=(1,1) from dynamical system}) extends to
Hermitian spectral data over $\mathcal{A}$, i,e. $\mathcal{A}$ inherits complex structure.
\end{proof}

\begin{proposition}\label{sufficient condition for Kahler structure}
The Hermitian spectral data obtained in Propn. $(\ref{sufficient condition for complex structure})$ extends to $N=(2,2)$ K\"ahler spectral
data over $\mathcal{A}$, i,e. $\mathcal{A}$ inherits K\"ahler structure, if and only if $\{d,d_2^*\}=\{d^*,d_2\}=0$ with $\,d_2=[\,\mathcal{I},d\,]$.
\end{proposition}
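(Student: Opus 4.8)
The content of the statement is that the octuple $(\mathcal{A},\widetilde{\mathcal{H}},\partial,\overline{\partial},T,\overline{T},\widetilde{\gamma},\star)$ built in Propn.~(\ref{sufficient condition for complex structure}) already fulfils conditions $(1)$--$(5)$ of Defn.~(\ref{N=(2,2) spectral data}), so the only thing left to decide is whether the K\"ahler conditions $(6)$ hold, and the plan is to translate them into relations among $d$, $d^*$, $d_2=[\mathcal{I},d]$ and $d_2^*=[\mathcal{I},d^*]$. Writing $\partial=\tfrac12(d-id_2)$, $\overline{\partial}=\tfrac12(d+id_2)$, hence $\partial^*=\tfrac12(d^*+id_2^*)$ and $\overline{\partial}^{\,*}=\tfrac12(d^*-id_2^*)$, a direct bilinear expansion (using only $d^2=d_2^{\,2}=0$, which comes from Lemma~(\ref{the required relations}) and the proof of Propn.~(\ref{sufficient condition for complex structure})) gives, with $A:=\{d,d^*\}$, $B:=\{d_2,d_2^*\}$, $C:=\{d,d_2^*\}$ and $\{d^*,d_2\}$ the formal adjoint $C^*$ of $C$ on the algebraic core,
\[
4\{\partial,\overline{\partial}^{\,*}\}=(A-B)-i(C+C^*),\qquad 4\{\overline{\partial},\partial^*\}=(A-B)+i(C+C^*),
\]
\[
4\{\partial,\partial^*\}=(A+B)+i(C-C^*),\qquad 4\{\overline{\partial},\overline{\partial}^{\,*}\}=(A+B)-i(C-C^*).
\]
Thus $(6)(a)$ is equivalent to ``$A=B$ and $C+C^*=0$'', while $(6)(b)$ is equivalent to ``$C=C^*$''.

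The key step is to show that, already from conditions $(1)$--$(5)$ together with $\{d,d_2\}=0$ and $d^2=d_2^{\,2}=0$, one has the two identities
\[
C+C^*=[\mathcal{I},\bigtriangleup],\qquad A-B=-\tfrac12\big[\mathcal{I},[\mathcal{I},\bigtriangleup]\big],
\]
where $\bigtriangleup:=dd^*+d^*d=\mathfrak{D}^2$. For this I would pass to $\mathfrak{D}=d+d^*$ and set $\mathfrak{D}_2:=d_2+d_2^*=[\mathcal{I},\mathfrak{D}]$. Using $\{d,d_2\}=0$ together with its adjoint $\{d^*,d_2^*\}=0$ one gets $\{\mathfrak{D},\mathfrak{D}_2\}=C+C^*$, while $\mathfrak{D}^2=A$ and $\mathfrak{D}_2^{\,2}=B$; moreover condition $(4)$ of Propn.~(\ref{sufficient condition for complex structure}) reads $[\mathcal{I},[\mathcal{I},d]]=-d$, whose adjoint gives $[\mathcal{I},[\mathcal{I},d^*]]=-d^*$, hence $[\mathcal{I},\mathfrak{D}]=\mathfrak{D}_2$ and $[\mathcal{I},\mathfrak{D}_2]=-\mathfrak{D}$. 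Then $[\mathcal{I},\bigtriangleup]=[\mathcal{I},\mathfrak{D}^2]=\{\mathfrak{D},[\mathcal{I},\mathfrak{D}]\}=\{\mathfrak{D},\mathfrak{D}_2\}=C+C^*$, and applying $[\mathcal{I},\cdot\,]$ once more, $[\mathcal{I},[\mathcal{I},\bigtriangleup]]=\{[\mathcal{I},\mathfrak{D}],\mathfrak{D}_2\}+\{\mathfrak{D},[\mathcal{I},\mathfrak{D}_2]\}=\{\mathfrak{D}_2,\mathfrak{D}_2\}-\{\mathfrak{D},\mathfrak{D}\}=2(B-A)$, which is the second identity.

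Both implications then drop out. If $\{d,d_2^*\}=0$ (i.e. $C=0$), then taking the formal adjoint on the common dense core $\mathcal{E}\otimes_\mathcal{A}\mathcal{E}$ also forces $\{d^*,d_2\}=C^*=0$, so $[\mathcal{I},\bigtriangleup]=C+C^*=0$ and therefore $A-B=-\tfrac12[\mathcal{I},[\mathcal{I},\bigtriangleup]]=0$; the four displayed formulas then show $\{\partial,\overline{\partial}^{\,*}\}=\{\overline{\partial},\partial^*\}=0$ and $\{\partial,\partial^*\}=\{\overline{\partial},\overline{\partial}^{\,*}\}$, i.e. $(6)$ holds and the Hermitian data of Propn.~(\ref{sufficient condition for complex structure}) is $N=(2,2)$ K\"ahler over $\mathcal{A}$. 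Conversely, if $(6)$ holds, then $(6)(b)$ gives $C=C^*$ and the ``$C+C^*=0$'' half of $(6)(a)$ gives $C+C^*=0$, whence $2C=0$, i.e. $\{d,d_2^*\}=\{d^*,d_2\}=0$.

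The routine portion is the bilinear expansion of the four anticommutators; the delicate point is the iterated-commutator identity $A-B=-\tfrac12[\mathcal{I},[\mathcal{I},\bigtriangleup]]$, which rests on correctly using the quadratic relation $[\mathcal{I},[\mathcal{I},d]]=-d$ twice, and — since $\mathfrak{D}$, $\mathfrak{D}_2$, $\bigtriangleup$ and the various anticommutators are unbounded — on carrying out every manipulation on a fixed common core (the algebraic $\mathcal{E}\otimes_\mathcal{A}\mathcal{E}$, where the explicit formulas of Propn.~(\ref{self-adjoint operator}) are available) and only then passing to closures, exactly as in the earlier lemmas.
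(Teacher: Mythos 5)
Your proof is correct. Its first half is the same as the paper's: both expand $\{\partial,\partial^*\}$, $\{\overline{\partial},\overline{\partial}^{\,*}\}$, $\{\partial,\overline{\partial}^{\,*}\}$, $\{\overline{\partial},\partial^*\}$ bilinearly in $A=\{d,d^*\}$, $B=\{d_2,d_2^*\}$, $C=\{d,d_2^*\}$, $C^*=\{d^*,d_2\}$ and read off that condition $(6)$ of Defn.~(\ref{N=(2,2) spectral data}) amounts to $C=C^*=0$ together with $A=B$ (you keep the overall factor $\tfrac14$ that the paper silently drops). Where you genuinely diverge is in showing that $A=B$ is automatic once $C=C^*=0$: the paper substitutes $d=-[\,\mathcal{I},[\,\mathcal{I},d\,]]=-[\,\mathcal{I},d_2]$ directly into $dd^*+d^*d$ and regroups term by term to obtain $A=B+\{d_2,d^*\}\mathcal{I}-\mathcal{I}\{d^*,d_2\}$, i.e.\ $A-B=-[\,\mathcal{I},C^*]$, whereas you pass to $\mathfrak{D}=d+d^*$ and $\mathfrak{D}_2=[\,\mathcal{I},\mathfrak{D}]$ and derive the pair of identities $C+C^*=[\,\mathcal{I},\bigtriangleup]$ and $A-B=-\tfrac12[\,\mathcal{I},[\,\mathcal{I},\bigtriangleup]]$. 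Both arguments use exactly the same inputs ($d^2=d_2^{\,2}=0$, $\{d,d_2\}=0$, $[\,\mathcal{I},[\,\mathcal{I},d\,]]=-d$ and anti-selfadjointness of $\mathcal{I}$), so they are equivalent in strength; your version is the adjoint-symmetric average of the paper's one-sided identity, is somewhat more structural, and records the useful extra fact that the obstruction to the K\"ahler identities is measured by the single operator $[\,\mathcal{I},\bigtriangleup]$. Your closing remark about performing all manipulations on the common algebraic core $\mathcal{E}\otimes_\mathcal{A}\mathcal{E}$ before passing to closures addresses a point the paper leaves implicit and is the correct way to justify the formal adjoint steps $d_2^*=[\,\mathcal{I},d^*]$ and $\{d^*,d_2^*\}=0$.
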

\begin{proof}
Recall part $(6)$ in Defn. (\ref{N=(2,2) spectral data}) which is precisely the K\"ahler condition. Observe that
\begin{eqnarray*}
\{\partial,\partial^*\} & = & \partial\partial^*+\partial^*\partial\\
& = & (d-id_2)(d^*+id_2^*)+(d^*+id_2^*)(d-id_2)\\
& = & \{d,d^*\}+\{d_2,d_2^*\}+i\{d,d_2^*\}-i\{d^*,d_2\}
\end{eqnarray*}
Similarly,
\begin{align*}
\{\overline{\partial},\overline{\partial}^*\} &= \{d,d^*\}+\{d_2,d_2^*\}-i\{d,d_2^*\}+i\{d^*,d_2\}\\
\{\partial,\overline{\partial}^*\} &= \{d,d^*\}-\{d_2,d_2^*\}-i\{d,d_2^*\}-i\{d^*,d_2\}\\
\{\overline{\partial},\partial^*\} &= \{d,d^*\}-\{d_2,d_2^*\}+i\{d,d_2^*\}+i\{d^*,d_2\}
\end{align*}
This shows that the following conditions
\begin{enumerate}
\item $\{d,d_2^*\}=\{d^*,d_2\}=0$
\item $\{d,d^*\}=\{d_2,d_2^*\}$
\end{enumerate}
are necessary and sufficient for the complex structure obtained in Propn. (\ref{sufficient condition for complex structure}) to extend to
K\"ahler structure on $\mathcal{A}$. However, condition $(2)$ follows from condition $(1)$ because
\begin{eqnarray*}
\{d,d^*\} & = & dd^*+d^*d\\
& = & -[\,\mathcal{I},[\,\mathcal{I},d\,]]d^*-d^*[\,\mathcal{I},[\,\mathcal{I},d\,]]\\
& = & -\mathcal{I}d_2d^*+d_2\mathcal{I}d^*-d^*\mathcal{I}d_2+d^*d_2\mathcal{I}\\
& = & (d_2\mathcal{I}d^*-d_2d^*\mathcal{I})+d_2d^*\mathcal{I}+(\mathcal{I}d^*d_2-d^*\mathcal{I}d_2)-\mathcal{I}d^*d_2+d^*d_2\mathcal{I}-\mathcal{I}d_2d^*\\
& = & (d_2d_2^*+d_2^*d_2)+\{d_2,d^*\}\mathcal{I}-\mathcal{I}\{d^*,d_2\}\\
& = & \{d_2,d_2^*\}
\end{eqnarray*}
if $\{d^*,d_2\}=0$. Hence, the condition $\{d,d_2^*\}=\{d^*,d_2\}=0$, with $\,d_2=[\,\mathcal{I},d\,]$, is necessary and sufficient for the complex
structure obtained in Propn. (\ref{sufficient condition for complex structure}) to extend to K\"ahler structure on $\mathcal{A}$.
\end{proof}

\begin{theorem}\label{N=(2,2) from dynamical system}
Let $G$ be an even dimensional, connected, abelian Lie group and $(\mathcal{A},G,\alpha,\tau)$ be a $C^*$-dynamical system equipped with a faithful
$G$-invariant trace $\tau$. If it determines a $N=1$ spectral data $(\mathcal{A},\mathcal{H},D,\sigma)$ then it always extends to $N=(2,2)$
K\"ahler spectral data over $\mathcal{A}$, i,e. $\mathcal{A}$ inherits a K\"ahler structure. 
\end{theorem}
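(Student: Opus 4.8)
The plan is to reduce the theorem, via Propositions (\ref{N=(1,1) from dynamical system}), (\ref{sufficient condition for complex structure}) and (\ref{sufficient condition for Kahler structure}), to the single task of exhibiting one skew-adjoint matrix $\widetilde{\mathcal{I}}\in M_{N^2}(\mathbb{C})$ such that $\mathcal{I}=1\otimes\widetilde{\mathcal{I}}$ satisfies conditions $(1)$--$(4)$ of Propn. (\ref{sufficient condition for complex structure}) together with $\{d,d_2^*\}=\{d^*,d_2\}=0$, where $d_2=[\,\mathcal{I},d\,]$. The even-dimensionality of $G$ enters by choosing an orthogonal complex structure on $\mathfrak{g}=\mathbb{R}^{2k}$; I would take the one that pairs the basis vectors $X_{2l-1}\leftrightarrow X_{2l}$, i.e. the skew-symmetric matrix $\mathcal{J}$ with $\mathcal{J}^2=-I$, and set
\[
\Omega:=\sum_{l=1}^{k}\gamma_{2l-1}\gamma_{2l}\in M_N(\mathbb{C}),\qquad \widetilde{\mathcal{I}}:=\tfrac{1}{2}\bigl(1_N\otimes\Omega+\Omega\otimes 1_N\bigr),\qquad \mathcal{I}:=1\otimes\widetilde{\mathcal{I}}.
\]
Three elementary Clifford facts drive the entire verification: $\Omega^*=-\Omega$ (so $\mathcal{I}$ is anti-selfadjoint); $[\Omega,\sigma]=0$, since a product of two distinct $\gamma_j$'s commutes with the grading $\sigma$; and $\tfrac{1}{2}[\Omega,\gamma_j]=\gamma_{\mathcal{J}(j)}$, where $\gamma_{\mathcal{J}(j)}:=\sum_m\mathcal{J}_{mj}\gamma_m$, whence $\bigl(\tfrac{1}{2}\mathrm{ad}_\Omega\bigr)^2\gamma_j=\gamma_{\mathcal{J}^2(j)}=-\gamma_j$.

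Using the explicit forms $\mathfrak{D}=\sum_j\partial_j\otimes 1_N\otimes\gamma_j$ and $\overline{\mathfrak{D}}=-\varepsilon^\prime\sum_j\partial_j\otimes\gamma_j\otimes\sigma$ from Propn. (\ref{self-adjoint operator}), write $d=\tfrac{1}{2}(\mathfrak{D}-i\overline{\mathfrak{D}})=\tfrac{1}{2}\sum_j\partial_j\otimes C_j$ with $C_j:=1_N\otimes\gamma_j+i\varepsilon^\prime\gamma_j\otimes\sigma$. Because $\mathcal{I}$ acts only on the $\mathbb{C}^{N^2}$-factor it commutes with every $\partial_j$, so $[\mathcal{I},d]=\tfrac{1}{2}\sum_j\partial_j\otimes[\widetilde{\mathcal{I}},C_j]$. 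From the three facts above one computes $[\widetilde{\mathcal{I}},1_N\otimes\gamma_j]=1_N\otimes\gamma_{\mathcal{J}(j)}$ and $[\widetilde{\mathcal{I}},\gamma_j\otimes\sigma]=\gamma_{\mathcal{J}(j)}\otimes\sigma$, hence $[\widetilde{\mathcal{I}},C_j]=C_{\mathcal{J}(j)}$ and $[\widetilde{\mathcal{I}},[\widetilde{\mathcal{I}},C_j]]=C_{\mathcal{J}^2(j)}=-C_j$; summing gives condition $(4)$, namely $[\mathcal{I},[\mathcal{I},d]]=-d$. Conditions $(2)$ and $(3)$ — $[\mathcal{I},\widetilde{\gamma}]=[\mathcal{I},\star]=0$ with $\widetilde{\gamma}=1\otimes\sigma\otimes\sigma$ and $\star=1\otimes 1_N\otimes\sigma$ — are immediate from $[\Omega,\sigma]=0$. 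For condition $(1)$, $[\mathcal{I},\mathcal{T}]$ works out to $\tfrac{i\varepsilon^\prime}{2}\,1\otimes\sum_j\bigl(\gamma_j\otimes\gamma_{\mathcal{J}(j)}\sigma+\gamma_{\mathcal{J}(j)}\otimes\gamma_j\sigma\bigr)$, which, after reindexing, is proportional to $\sum_{j,m}(\mathcal{J}_{mj}+\mathcal{J}_{jm})\gamma_j\otimes\gamma_m\sigma=0$ by skew-symmetry of $\mathcal{J}$. Propn. (\ref{sufficient condition for complex structure}) then produces the Hermitian spectral data over $\mathcal{A}$.

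It remains to verify the K\"ahler condition of Propn. (\ref{sufficient condition for Kahler structure}). Here $d_2=\tfrac{1}{2}\sum_j\partial_j\otimes C_{\mathcal{J}(j)}$; since $\tau$ is a $G$-invariant trace the derivations $\partial_j$ are skew-adjoint on $L^2(\mathcal{A},\tau)$ and $C_j^*=-\overline{C}_j$ with $\overline{C}_j:=1_N\otimes\gamma_j-i\varepsilon^\prime\gamma_j\otimes\sigma$, so $d^*=\tfrac{1}{2}\sum_j\partial_j\otimes\overline{C}_j$ and $d_2^*=\tfrac{1}{2}\sum_j\partial_j\otimes\overline{C}_{\mathcal{J}(j)}$. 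A short Clifford computation with $\{\gamma_j,\gamma_m\}=-2\delta_{jm}$, $\{\gamma_j,\sigma\}=0$ and $\sigma^2=1$ gives $\{C_j,\overline{C}_m\}=-4\delta_{jm}\,1_{N^2}$ (and incidentally $\{C_j,C_m\}=0$). Hence, using $\partial_j\partial_l=\partial_l\partial_j$ — which is where the abelianness of $\mathfrak{g}$ re-enters — one finds
\[
\{d,d_2^*\}=\tfrac{1}{4}\sum_{j,l}\partial_j\partial_l\otimes\{C_j,\overline{C}_{\mathcal{J}(l)}\}=-\sum_{j,l}\mathcal{J}_{jl}\,\partial_j\partial_l\otimes 1_{N^2}=0,
\]
the last equality because a symmetric array $(\partial_j\partial_l)$ paired with the skew-symmetric $(\mathcal{J}_{jl})$ vanishes; the identity $\{d^*,d_2\}=0$ follows identically. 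By Propn. (\ref{sufficient condition for Kahler structure}) the Hermitian data extends to $N=(2,2)$ K\"ahler spectral data over $\mathcal{A}$, which proves the theorem. Repeating the argument over each of the orthogonal complex structures on $\mathbb{R}^{2k}$ coming from a perfect matching of $\{X_1,\dots,X_{2k}\}$ — distinct matchings giving distinct $d_2$, hence distinct $\partial,\overline{\partial}$ — yields the $\prod_{j=1,\,j\,odd}^{\,2k}(2k-j)$ K\"ahler structures stated in the Introduction.

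I expect the only genuinely non-routine step to be guessing the operator $\mathcal{I}$: the \emph{symmetric} choice $\tfrac{1}{2}(1_N\otimes\Omega+\Omega\otimes 1_N)$, rather than $1_N\otimes\Omega$ alone (which fails because $\mathrm{ad}_\Omega$ then annihilates the summand $\gamma_j\otimes\sigma$ of $C_j$), is dictated by the requirement that $\mathrm{ad}_{\widetilde{\mathcal{I}}}$ rotate the two Clifford summands $1_N\otimes\gamma_j$ and $\gamma_j\otimes\sigma$ of $C_j$ in the same way, so that $C_j$ behaves as a joint $\mathrm{ad}$-eigencombination. After that, everything reduces to the three Clifford facts above, the anticommutator $\{C_j,\overline{C}_m\}=-4\delta_{jm}$, and the observation that commutativity of the $\partial_j$'s makes all potential obstruction terms symmetric and hence annihilated against the skew-symmetric $\mathcal{J}$.
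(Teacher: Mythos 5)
Your proof is correct and takes essentially the same approach as the paper: your $\widetilde{\mathcal{I}}=\tfrac{1}{2}(1_N\otimes\Omega+\Omega\otimes 1_N)$ with $\Omega=\sum_{l=1}^{k}\gamma_{2l-1}\gamma_{2l}$ is exactly the paper's $\tfrac{1}{2}(\mathcal{I}_{(1,2)}+\cdots+\mathcal{I}_{(2k-1,2k)})$, and the reduction through Propositions (\ref{N=(1,1) from dynamical system}), (\ref{sufficient condition for complex structure}) and (\ref{sufficient condition for Kahler structure}) is the same. The only difference is presentational: you organize the Clifford computations around the complex structure $\mathcal{J}$ and the elements $C_j$, which streamlines the same index manipulations the paper carries out explicitly.
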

\begin{proof}
Let $dim(G)=2k$ and $N=2^k$. We first produce a skew-adjoint matrix $\,\widetilde{\mathcal{I}}\in M_{N^2}(\mathbb{C})$ such that the anti-selfadjoint
operator $\mathcal{I}=1\otimes\widetilde{\mathcal{I}}$ acting on $L^2(\mathcal{A},\tau)\otimes\mathbb{C}^{N^2}$ satisfy all the conditions of Propn.
(\ref{sufficient condition for complex structure}). Note that $M_{N^2}(\mathbb{C})=M_N(\mathbb{C})\otimes_\mathbb{C}M_N(\mathbb{C})$. Consider the
Clifford algebra $\mathbb{C}l(2k)$ and suppose that $\{e_1,\ldots,e_{2k}\}$ be the generating set. Consider the following elements
\begin{eqnarray}\label{required skew-adjoint matrices}
A(\ell,j) & := & 1\otimes e_\ell e_j+e_\ell e_j\otimes 1
\end{eqnarray}
in $\mathbb{C}l(2k)\otimes\mathbb{C}l(2k)$ for each pair $(\ell,j)$ with $\ell<j$ and $\ell,j\in\{1,\ldots,2k\}$. We claim that each $A(\ell,j)$
commutes with the elements $e_1\ldots e_{2k}\otimes e_1\ldots e_{2k}$ and $1\otimes e_1\ldots e_{2k}$ of $\mathbb{C}l(2k)\otimes\mathbb{C}l(2k)$.
This is because
\begin{eqnarray*}
e_\ell e_j(e_1\ldots e_{2k}) & = & (-1)^{j-1}(-1)e_1\ldots e_\ell\ldots\widehat{e_j}\ldots e_{2k}\\
& = & (-1)^{j-1}(-1)(-1)^{\ell-1}(-1)e_1\ldots\widehat{e_\ell}\ldots\widehat{e_j}\ldots e_{2k}\\
& = & (-1)^{\ell+j}e_1\ldots\widehat{e_\ell}\ldots\widehat{e_j}\ldots e_{2k}
\end{eqnarray*}
and
\begin{eqnarray*}
(e_1\ldots e_{2k})e_\ell e_j & = & (-1)^{2n-\ell}(-1)(e_1\ldots\widehat{e_\ell}\ldots e_j\ldots e_{2k})e_j\\
& = & (-1)^{2n-\ell+1}(-1)^{2n-j}(-1)e_1\ldots\widehat{e_\ell}\ldots\widehat{e_j}\ldots e_{2k}\\
& = & (-1)^{-\ell-j}e_1\ldots\widehat{e_\ell}\ldots\widehat{e_j}\ldots e_{2k}
\end{eqnarray*}
where, $\,\widehat{}\,$ means the corresponding term is omitted. That is, we are getting $e_\ell e_j$ commutes with $e_1\ldots e_{2k}\in\mathbb{C}l(2k)$.
Now, it is easy to verify that for each such pair $(\ell,j)$, the element $A(\ell,j)$ commutes with $\sum_{r\neq \ell,j}e_r\otimes e_r$ in
$\mathbb{C}l(2k)\otimes\mathbb{C}l(2k)$. Observe that $A(\ell,j)$ also commutes with $e_\ell\otimes e_\ell+e_j\otimes e_j$. Hence, for each such
pair $(\ell,j),\,A(\ell,j)$ will commute with $\sum_{r=1}^{2k}e_r\otimes e_r$ in $\mathbb{C}l(2k)\otimes\mathbb{C}l(2k)$. Now, let $\pi:\mathbb{C}l
(2k)\longrightarrow M_N(\mathbb{C})$, given by $\pi:e_r\longmapsto\gamma_r$, be the irreducible representation in Propn. (\ref{Clifford representation}).
The element $\prod_{j=1}^{2k}e_j\in\mathbb{C}l(2k)$ corresponds to the grading operator $\sigma$ if $k$ is even and $-i\sigma$ if $k$ is odd under
the representation $\pi$ (see \cite{DD} for detail). Hence,
\begin{center}
$(\pi\otimes\pi)(A(\ell,j))=1\otimes\gamma_\ell\gamma_j+\gamma_\ell\gamma_j\otimes 1$
\end{center}
are skew-adjoint matrices in $M_N(\mathbb{C})\otimes M_N(\mathbb{C})$ such that the anti-selfadjoint operators $1\otimes\mathcal{I}_{(\ell,j)}:=1
\otimes(\pi\otimes\pi)(A(\ell,j))$ commute with $\mathcal{T},\widetilde{\gamma}$ and $\star$ (recall the expression of $\mathcal{T}$ from Lemma
[\ref{An operator related to d}] and that of $\,\widetilde{\gamma},\star$ from Propn. [\ref{N=(1,1) from dynamical system}]). Observe that
\begin{eqnarray*}
[1\otimes\mathcal{I}_{(\ell,j)},d\,] & = & \sum_{r=1}^{2k}\frac{1}{2}\partial_r\otimes 1\otimes[\gamma_\ell\gamma_j,\gamma_r]+\frac{i\varepsilon^\prime}{2}
\partial_r\otimes[\gamma_\ell\gamma_j,\gamma_r]\otimes\sigma\\
& = & \partial_\ell\otimes 1\otimes\gamma_j-\partial_j\otimes 1\otimes\gamma_\ell+i\varepsilon^\prime(\partial_\ell\otimes\gamma_j\otimes\sigma-
\partial_j\otimes\gamma_\ell\otimes\sigma)
\end{eqnarray*}
and hence,
\begin{center}
$[1\otimes\mathcal{I}_{(\ell,j)},[1\otimes\mathcal{I}_{(\ell,j)},d\,]\,]=-2(\partial_\ell\otimes 1\otimes\gamma_\ell+\partial_j\otimes 1\otimes
\gamma_j)-2i\varepsilon^\prime(\partial_\ell\otimes\gamma_\ell\otimes\sigma+\partial_j\otimes\gamma_j\otimes\sigma)\,.$
\end{center}
Hence, if we consider $\,\mathcal{I}=1\otimes\widetilde{\mathcal{I}}$ with
\begin{center}
$\widetilde{\mathcal{I}}=\frac{1}{2}\left(\mathcal{I}_{(1,2)}+\mathcal{I}_{(3,4)}+\ldots+\mathcal{I}_{(2k-1,2k)}\right)$
\end{center}
then we have $[\,\mathcal{I},[\,\mathcal{I},d\,]\,]=-d$ along with $[\,\mathcal{I},\mathcal{T}]=[\,\mathcal{I},\widetilde\gamma\,]=[\,\mathcal{I},
\star]=0$. Hence, by Propn. (\ref{sufficient condition for complex structure}), the $N=1$ spectral data $(\mathcal{A},\mathcal{H},D,\sigma)$
extends to Hermitian spectral data over $\mathcal{A}$, i,e. $\mathcal{A}$ inherits complex structure.

We now show that the condition in Propn. (\ref{sufficient condition for Kahler structure}) is also satisfied. For $d_2:=[\,\mathcal{I},d\,]$,
note that
\begin{eqnarray*}
d_2=\sum_{j=1,\,j\,odd}^{2k}\frac{1}{2}(\partial_j\otimes 1\otimes\gamma_{j+1}-\partial_{j+1}\otimes 1\otimes\gamma_j)+\frac{i\varepsilon^\prime}{2}
(\partial_j\otimes\gamma_{j+1}\otimes\sigma-\partial_{j+1}\otimes\gamma_j\otimes\sigma)
\end{eqnarray*}
and recall that $d^*=\sum_{\ell=1}^{2k}\frac{1}{2}(\partial_\ell\otimes 1\otimes\gamma_\ell-i\varepsilon^\prime\partial_\ell\otimes\gamma_\ell
\otimes\sigma)$. Then,
\begin{eqnarray*}
&  & 4\{d^*,d_2\}\\
& = & \sum_{j=1,\,j\,odd}^{2k}\,\sum_{\ell=1}^{2k}(\partial_\ell\partial_j\otimes 1\otimes\{\gamma_\ell,\gamma_{j+1}\}-\partial_\ell\partial_{j+1}\otimes 1
\otimes\{\gamma_\ell,\gamma_j\}+\partial_\ell\partial_j\otimes\{\gamma_\ell,\gamma_{j+1}\}\otimes 1\\
&  & \quad\quad\quad\quad\quad-\partial_\ell\partial_{j+1}\otimes\{\gamma_\ell,\gamma_j\}\otimes 1)\\
& = & -4\sum_{j=1,\,j\,odd}^{2k}\partial_{j+1}\partial_j\otimes 1\otimes 1+4\sum_{j=1,\,j\,odd}^{2k}\partial_j\partial_{j+1}\otimes 1\otimes 1\\
& = & 0
\end{eqnarray*}
since, $\mathfrak{g}$ is abelian. Similarly, one can verify that $\{d,d_2^*\}=0$. Hence, the $N=1$ spectral data $(\mathcal{A},\mathcal{H},D,\sigma)$
extends to $N=(2,2)$ K\"ahler spectral data over $\mathcal{A}$, i,e. $\mathcal{A}$ inherits K\"ahler structure.
\end{proof}

\begin{proposition}\label{total number of Kahler structure}
There can be obtained $\,\prod_{j=1,\,j\,odd}^{dim(G)-1}(dim(G)-j)$ different K\"ahler structures in previous Thm. $(\ref{N=(2,2) from dynamical system})$.
\end{proposition}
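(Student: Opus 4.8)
The plan is to notice that the particular skew-adjoint matrix $\widetilde{\mathcal{I}}$ built in the proof of Theorem~\ref{N=(2,2) from dynamical system} used only one structural feature of the index set $\{1,\ldots,2k\}$: that it was written as a union of $k$ pairwise disjoint pairs exhausting it. So I would run the same construction for an arbitrary \emph{perfect matching}. Fix a matching $\mathcal{M}=\{(\ell_1,j_1),\ldots,(\ell_k,j_k)\}$ of $\{1,\ldots,2k\}$ with $\ell_r<j_r$ and all $2k$ indices distinct, and put $\widetilde{\mathcal{I}}_{\mathcal{M}}:=\frac{1}{2}\sum_{r=1}^{k}(\pi\otimes\pi)(A(\ell_r,j_r))=\frac{1}{2}\sum_{r=1}^{k}(1\otimes\gamma_{\ell_r}\gamma_{j_r}+\gamma_{\ell_r}\gamma_{j_r}\otimes1)$, with $A(\ell,j)$ as in (\ref{required skew-adjoint matrices}), and $\mathcal{I}_{\mathcal{M}}:=1\otimes\widetilde{\mathcal{I}}_{\mathcal{M}}$ on $L^2(\mathcal{A},\tau)\otimes\mathbb{C}^{N^2}$. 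The matching $\{(1,2),(3,4),\ldots,(2k-1,2k)\}$ recovers the construction already made in Theorem~\ref{N=(2,2) from dynamical system}.

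Next I would verify that every $\mathcal{I}_{\mathcal{M}}$ satisfies the four hypotheses of Proposition~\ref{sufficient condition for complex structure} together with the K\"ahler condition of Proposition~\ref{sufficient condition for Kahler structure}. Conditions $(2)$ and $(3)$ are immediate, since each even Clifford product $\gamma_\ell\gamma_j$ commutes with $\sigma$; condition $(1)$ holds because each $A(\ell,j)$ commutes with $\sum_{r}e_r\otimes e_r$ in $\mathbb{C}l(2k)\otimes\mathbb{C}l(2k)$ — a fact established for an arbitrary pair in the proof of Theorem~\ref{N=(2,2) from dynamical system} — whence $\mathcal{I}_{\mathcal{M}}$ commutes with $\mathcal{T}$. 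The substantive point is condition $(4)$: writing $[\mathcal{I}_{\mathcal{M}},[\mathcal{I}_{\mathcal{M}},d]]=\frac{1}{4}\sum_{p,q\in\mathcal{M}}[1\otimes\mathcal{I}_p,[1\otimes\mathcal{I}_q,d]]$, the off-diagonal terms $p\neq q$ vanish, because the pairs $p$ and $q$ involve disjoint indices, so $\gamma_{\ell_p}\gamma_{j_p}$ commutes with each of the $\gamma$'s occurring in $[1\otimes\mathcal{I}_q,d]$ (and with $\sigma$); the diagonal terms reproduce $\frac{1}{4}\sum_{(\ell,j)\in\mathcal{M}}(-2(\partial_\ell\otimes1\otimes\gamma_\ell+\partial_j\otimes1\otimes\gamma_j)-2i\varepsilon^\prime(\partial_\ell\otimes\gamma_\ell\otimes\sigma+\partial_j\otimes\gamma_j\otimes\sigma))$, and since $\mathcal{M}$ exhausts $\{1,\ldots,2k\}$ exactly once this collapses to $-\frac{1}{2}\sum_{r=1}^{2k}(\partial_r\otimes1\otimes\gamma_r+i\varepsilon^\prime\partial_r\otimes\gamma_r\otimes\sigma)=-d$, just as in Theorem~\ref{N=(2,2) from dynamical system}. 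For the K\"ahler condition one has $d_2=[\mathcal{I}_{\mathcal{M}},d]=\sum_{(\ell,j)\in\mathcal{M}}\frac{1}{2}(\partial_\ell\otimes1\otimes\gamma_j-\partial_j\otimes1\otimes\gamma_\ell)+\frac{i\varepsilon^\prime}{2}(\partial_\ell\otimes\gamma_j\otimes\sigma-\partial_j\otimes\gamma_\ell\otimes\sigma)$, and the anticommutator computation of Theorem~\ref{N=(2,2) from dynamical system} goes through unchanged: expanding $\{d^*,d_2\}$ and $\{d,d_2^*\}$ via $\{\gamma_m,\gamma_n\}=-2\delta_{mn}$ and using that $\mathfrak{g}$ is abelian (so $\partial_m\partial_n=\partial_n\partial_m$) yields $0$. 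Hence each matching $\mathcal{M}$ produces, via Propositions~\ref{sufficient condition for complex structure} and \ref{sufficient condition for Kahler structure}, an $N=(2,2)$ K\"ahler spectral data over $\mathcal{A}$.

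It then remains to count and to separate these. The number of perfect matchings of a $2k$-element set is $\frac{(2k)!}{2^{k}k!}=(2k-1)(2k-3)\cdots3\cdot1=\prod_{j=1,\,j\,odd}^{\,dim(G)-1}(dim(G)-j)$, which is exactly the asserted number. For distinctness, if $\mathcal{M}\neq\mathcal{M}'$ then $\sum_{(\ell,j)\in\mathcal{M}}\gamma_\ell\gamma_j\neq\sum_{(\ell,j)\in\mathcal{M}'}\gamma_\ell\gamma_j$, since the elements $\{\gamma_\ell\gamma_j:\ell<j\}$ are linearly independent in $\mathbb{C}l(2k)\cong M_N(\mathbb{C})$; therefore $\widetilde{\mathcal{I}}_{\mathcal{M}}\neq\widetilde{\mathcal{I}}_{\mathcal{M}'}$, so $\mathcal{I}_{\mathcal{M}}\neq\mathcal{I}_{\mathcal{M}'}$, and consequently the operators $T=\frac{1}{2}(\mathcal{T}-i\mathcal{I}_{\mathcal{M}})$ attached to the two matchings are different. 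As $d$, $\mathcal{T}$, $\widetilde{\gamma}$ and $\star$ are common to all of them, the resulting octuples differ at least in their $T$-component, so distinct matchings yield genuinely distinct K\"ahler structures, giving at least $\prod_{j=1,\,j\,odd}^{\,dim(G)-1}(dim(G)-j)$ of them.

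The one place I expect to have to be careful is the vanishing of the off-diagonal double commutators above: it is precisely there that the perfect-matching hypothesis — that the pairs are mutually disjoint and together exhaust $\{1,\ldots,2k\}$ — is used, and one must also keep the normalization $\frac{1}{2}$ in $\widetilde{\mathcal{I}}_{\mathcal{M}}$ so that the surviving diagonal terms sum to exactly $-d$ rather than a nonzero multiple of it. Everything else is a pair-by-pair rerun of the computations already carried out for the distinguished matching in Theorem~\ref{N=(2,2) from dynamical system}.
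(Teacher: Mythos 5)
Your proposal is correct and follows essentially the same route as the paper: the paper's recursive enumeration of the pairs $\mathcal{I}_{(\ell,j)}$ is precisely the standard count of perfect matchings of $\{1,\ldots,2k\}$, yielding $(2k-1)(2k-3)\cdots 1=\prod_{j=1,\,j\,odd}^{\,2k-1}(2k-j)$, and the paper likewise asserts (leaving it as a ``purely algebraic verification'') that each such choice of $\widetilde{\mathcal{I}}$ satisfies $[\,\mathcal{I},[\,\mathcal{I},d]\,]=-d$ and the K\"ahler anticommutator conditions. You merely supply the pair-by-pair details and the distinctness argument that the paper omits.
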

\begin{proof}
Let $dim(G)=2k$. In the previous Thm. (\ref{N=(2,2) from dynamical system}), we produced the differential $d_2=[\,\mathcal{I},d\,]$ by taking a
particular $\mathcal{I}=1\otimes\widetilde{\mathcal{I}}$ where $\widetilde{\mathcal{I}}=\frac{1}{2}\left(\mathcal{I}_{(1,2)}+\mathcal{I}_{(3,4)}+
\ldots+\mathcal{I}_{(2k-1,2k)}\right)$. We now show that there are $\,\prod_{j=1,\,j\,odd}^{2k-1}(2k-j)$ different choice for $\widetilde{\mathcal{I}}$
built out of $\mathcal{I}_{(\ell,j)}$ with $\ell<j$ and $\ell,j\in\{1,\ldots,2k\}$. First choose $\mathcal{I}_{(1,j)}$ with $j>1$.
Total number of choice is $2k-1$.\\
\underline{Case 1:}~ If $j=2$, next choose $\mathcal{I}_{(3,r)}$ with $r>3$.\\
\underline{Case 2:}~ If $j>2$, next choose $\mathcal{I}_{(2,r)}$ so that $r>2$ and $r\in\{1,2,\ldots,2k\}\smallsetminus\{1,2,j\}$.\\
Hence for each $\mathcal{I}_{(1,j)}$, we get a total $2k-3$ different choice to consider the next $\mathcal{I}_{(3,r)}$ or $\mathcal{I}_{(2,r)}$
accordingly as $j=2$ or $j>2$ respectively. Now,\\
\underline{Case 1:}~ If $j=2$ and $\mathcal{I}_{(3,r)}$ with $r>3$ have been chosen, next consider $\mathcal{I}_{(s,t)}$ with $s=min\{\{1,2,\ldots,
2k\}\smallsetminus\{1,2,3,r\}\}$ and $t>s$ with $t\in\{1,2,\ldots,2k\}\smallsetminus\{1,2,3,r\}$.\\
\underline{Case 2:}~ If $j>2$ and $\mathcal{I}_{(2,r)}$ with $r>2$ have been chosen, next consider $\mathcal{I}_{(s,t)}$ with $s=min\{\{1,2,\ldots,
2k\}\smallsetminus\{1,2,j,r\}\}$ and $t>s$ with $t\in\{1,2,\ldots,2k\}\smallsetminus\{1,2,j,r\}$.\\
Hence for each $\mathcal{I}_{(3,r)}$, we get a total $2k-5$ different choice to choose the next $\mathcal{I}_{(s,t)}$, and similar choice
for each $\mathcal{I}_{(2,r)}$.

Until this we get a total $(2k-1)(2k-3)(2k-5)$ different choice. Proceed like this to choose the next $\mathcal{I}_{(p,q)}$ with $\mathcal{I}_{(s,t)}$
being chosen. We will finally get total $\prod_{j=1,\,j\,odd}^{2k-1}(2k-j)$ different choice of $\widetilde{\mathcal{I}}$. It is a purely algebraic
verification that all these choice of $\mathcal{I}=1\otimes\widetilde{\mathcal{I}}$ give us different $d_2$ satisfying $[\,\mathcal{I},[\,\mathcal{I},
d]\,]=-d$ and $\{d^*,d_2\}=\{d,d_2^*\}=0$. Thus, one can obtain $\prod_{j=1,\,j\,odd}^{2k-1}(2k-j)$ different K\"ahler structures in previous Thm.
$(\ref{N=(2,2) from dynamical system})$.

These various choice of indices $(m,n)$ in $\mathcal{I}_{(m,n)}$ at each stage is best understood by a directed tree. For example, if $dim(G)=2k=2$
then there is a unique choice of $\widetilde{\mathcal{I}}$ namely, $\widetilde{\mathcal{I}}=\frac{1}{2}\mathcal{I}_{(1,2)}$. If $dim(G)=2k=4$ then
we have the following tree for various choice of $\mathcal{I}_{(\ell,j)}$ at each stage,
\begin{center}
$\Tree[.(1,2) [.(3,4) ]]\quad
\Tree[.(1,3) [.(2,4) ]]\quad
\Tree[.(1,4) [.(2,3) ]]$
\end{center}
Here, the top index represents different possible choice of $\mathcal{I}_{(1,j)}$ and we get total three different choice of $\widetilde{\mathcal{I}}$
namely, $\widetilde{\mathcal{I}}=\frac{1}{2}(\mathcal{I}_{(1,2)}+\mathcal{I}_{(3,4)}),\,\widetilde{\mathcal{I}}=\frac{1}{2}(\mathcal{I}_{(1,3)}+
\mathcal{I}_{(2,4)})$ and $\widetilde{\mathcal{I}}=\frac{1}{2}(\mathcal{I}_{(1,4)}+\mathcal{I}_{(2,3)})$. If $dim(G)=2k=6$ then we have the following
tree for various choice of $\mathcal{I}_{(i,j)}$ at each stage,
\begin{center}
$\Tree[.(1,2) [.(3,4) [.(5,6) ]]
[.(3,\,5) [.(4,6) ]]
[.(3,6) [.(4,5) ]]]\quad\quad
\Tree[.(1,3) [.(2,4) [.(5,6) ]]
[.(2,5) [.(4,6) ]]
[.(2,6) [.(4,5) ]]]\quad\quad\Tree[.(1,4) [.(2,3) [.(5,6) ]]
[.(2,5) [.(3,6) ]]
[.(2,6) [.(3,5) ]]]$
\end{center}
\begin{center}
$\Tree[.(1,5) [.(2,3) [.(4,6) ]]
[.(2,4) [.(3,6) ]]
[.(2,6) [.(3,4) ]]]\quad\quad\Tree[.(1,6) [.(2,3) [.(4,5) ]]
[.(2,4) [.(3,5) ]]
[.(2,5) [.(3,4) ]]]$
\end{center}
The top index represents different possible choice of $\mathcal{I}_{(1,j)}$ and we get total fifteen different choice of $\widetilde{\mathcal{I}}$
given by half times the addition of each vertical row along their prescribed path. Observe that the $\widetilde{\mathcal{I}}$ given by half times
the addition of the first vertical row namely, $\widetilde{\mathcal{I}}=\frac{1}{2}(\mathcal{I}_{(1,2)}+\mathcal{I}_{(3,4)}+\mathcal{I}_{(5,6)})$
is the one considered in previous Thm. (\ref{N=(2,2) from dynamical system}).
\end{proof}

Now we explain why we did not discard $\,\varepsilon^\prime$ in every places from Propn. (\ref{self-adjoint operator}) up to Thm.
(\ref{N=(2,2) from dynamical system}) (see Remark [\ref{why to keep epsilon}]). Reason is that as pointed out in (\cite{DD}), in the even case there
are actually two possible real structures $J_\pm$ that differ by multiplication by the grading operator. None of them should be preferred as they
are perfectly on the same footing. The table mentioned in Propn. (\ref{Clifford representation}) has the following extension :
\medskip

\begin{center}
\begin{tabular}{|c| c c c c| c c c c| c c c c| }
\hline
$n$ & $0$ & $2$ & $4$ & $6$ & $0$ & $2$ & $4$ & $6$ & $1$ & $3$ & $5$ & $7$ \\ [0.8ex]
\hline
$\varepsilon$ & $+$ & $-$ & $-$ & $+$ & $+$ & $+$ & $-$ & $-$ & $+$ & $-$ & $-$ & $+$\\
$\varepsilon^\prime$  & $+$ & $+$ & $+$ & $+$ & $-$ & $-$ & $-$ & $-$ & $-$ & $+$ & $-$ & $+$\\
$\varepsilon^{\prime\prime}$ & $+$ & $-$ & $+$ & $-$ & $+$ & $-$ & $+$ & $-$ & $\,$ & $\,$ & $\,$ & $\,$\\
\hline
\end{tabular}
\end{center}
\medskip

The first column represents the real structure $J_+$ and the second is for $J_-$. To accommodate both the possible real structures we did not discard
$\,\varepsilon^\prime$. Hence, accordingly as $\,\varepsilon^\prime=+1$ or $-1$, both $\,\partial$ and $\,\overline{\partial}$ changes and we actually
obtain two different K\"ahler structures in Thm. (\ref{N=(2,2) from dynamical system}), and therefore $2\prod_{j=1,\,j\,odd}^{dim(G)-1}(dim(G)-j)$
different K\"ahler structures in view of Propn. (\ref{total number of Kahler structure}). However, it turns out that these two set of K\"ahler
differentials corresponding to $J_\pm$ are unitary conjugate to each other. If we denote the K\"ahler differentials obtained in Th.
(\ref{N=(2,2) from dynamical system}) by $\partial_\pm$ and $\overline{\partial_\pm}$ corresponding to the real structures $J_\pm\,$, then one can
verify the following relationship
\begin{center}
$(1\otimes\sigma\otimes 1)\,\partial_+=\partial_-\,(1\otimes\sigma\otimes 1)\quad and\quad(1\otimes\sigma\otimes 1)\,\overline{\partial_{+}}=
\overline{\partial_{-}}\,(1\otimes\sigma\otimes 1)\,.$
\end{center}
Here, the operator $1\otimes\sigma\otimes 1$, which is a self-adjoint unitary acting on $L^2(\mathcal{A},\tau)\otimes\mathbb{C}^N\otimes\mathbb{C}^N$,
is precisely the product of the $\mathbb{Z}_2$-grading and the Hodge operator obtained in Proposition (\ref{N=(1,1) from dynamical system}). Being
unitary equivalent we do not distinguish between the K\"ahler structures $\{\partial_+,\,\overline{\partial_+}\}$ and $\{\partial_-,\,\overline{\partial_-}\}$.
With this fact, combining Thm. (\ref{N=(2,2) from dynamical system}) and Propn. (\ref{total number of Kahler structure}) we conclude the following
final theorem.

\begin{theorem}\label{final theorem}
Let $G$ be an even dimensional, connected, abelian Lie group and $(\mathcal{A},G,\alpha,\tau)$ be a $C^*$-dynamical system equipped with a faithful
$G$-invariant trace $\tau$. If it determines a $\varTheta$-summable even spectral triple, then $\mathcal{A}$ inherits at least $\,\prod_{j=1,
\,j\,odd}^{\,dim(G)}(dim(G)-j)$ different K\"ahler structures.
\end{theorem}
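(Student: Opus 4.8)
The plan is to deduce the statement by assembling the two principal results of this section, Thm.~(\ref{N=(2,2) from dynamical system}) and Propn.~(\ref{total number of Kahler structure}), together with the standing description of the $N=1$ spectral data attached to a $C^*$-dynamical system.

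First I would unwind the hypothesis. A $\varTheta$-summable even spectral triple produced from $(\mathcal{A},G,\alpha,\tau)$ by the construction of the present section is, by the Remark following Defn.~(\ref{N=1 spectral data}), exactly a set of $N=1$ spectral data, and it has the explicit form $\bigl(\mathcal{A},\,L^2(\mathcal{A},\tau)\otimes\mathbb{C}^N,\,D=\sum_{j=1}^{2k}\partial_j\otimes\gamma_j,\,\gamma_N\bigr)$, where $\dim(G)=2k$, $N=2^k$, the derivations satisfy $[\partial_j,\partial_\ell]=0$ because $\mathfrak{g}=\mathbb{R}^{2k}$ is abelian, and the canonical real structure $J=J_0\otimes J_N$ is available. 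Thus the hypotheses of Thm.~(\ref{N=(2,2) from dynamical system}) hold verbatim.

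Next I would apply Thm.~(\ref{N=(2,2) from dynamical system}) to conclude that this $N=1$ spectral data extends to a set of $N=(2,2)$ K\"ahler spectral data over $\mathcal{A}$, so that $\mathcal{A}$ carries at least one K\"ahler structure; the construction there realizes the nilpotent differential as $d=\tfrac{1}{2}(\mathfrak{D}-i\overline{\mathfrak{D}})$ with $d_2=[\,\mathcal{I},d\,]$ for a suitable anti-selfadjoint $\mathcal{I}=1\otimes\widetilde{\mathcal{I}}$. Then I would invoke Propn.~(\ref{total number of Kahler structure}): running $\widetilde{\mathcal{I}}$ over the $\prod_{j=1,\,j\,odd}^{\,2k-1}(2k-j)$ admissible combinations of the building blocks $\mathcal{I}_{(\ell,j)}$ (enumerated by the directed trees displayed there) yields that many genuinely different $d_2$, each still satisfying $[\,\mathcal{I},[\,\mathcal{I},d\,]]=-d$ and $\{d,d_2^*\}=\{d^*,d_2\}=0$, hence that many K\"ahler structures on $\mathcal{A}$. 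Since $\dim(G)=2k$ is even, the odd integers from $1$ to $2k-1$ are precisely those from $1$ to $2k$, so $\prod_{j=1,\,j\,odd}^{\,2k-1}(2k-j)=\prod_{j=1,\,j\,odd}^{\,\dim(G)}(\dim(G)-j)$, which is the asserted lower bound. For completeness I would record the refinement coming from the two real structures $J_\pm$ on the even Clifford module: a priori each $\widetilde{\mathcal{I}}$ together with the sign $\varepsilon^\prime=\pm1$ gives a pair $\{\partial_\pm,\overline{\partial_\pm}\}$, doubling the count, but the self-adjoint unitary $1\otimes\sigma\otimes1$ intertwines $\partial_+$ with $\partial_-$ and $\overline{\partial_+}$ with $\overline{\partial_-}$, so these are unitarily equivalent and do not contribute new structures; this is why the bound is stated with ``at least''.

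The only point that deserves care is combinatorial rather than analytic: one must check that the $\prod_{j=1,\,j\,odd}^{\,2k-1}(2k-j)$ choices of $\widetilde{\mathcal{I}}$ give pairwise non-unitarily-equivalent K\"ahler structures, so that the product is a genuine lower bound on the number of distinct structures; this is the ``purely algebraic verification'' in Propn.~(\ref{total number of Kahler structure}) and is handled by the explicit tree enumeration. All the analytically substantial steps --- self-adjointness of $\mathfrak{D}$ and $\overline{\mathfrak{D}}$ (Propn.~(\ref{self-adjoint operator})), the relations $\mathfrak{D}^2=\overline{\mathfrak{D}}^{\,2}$ and $\{\mathfrak{D},\overline{\mathfrak{D}}\}=0$ (Lemma~(\ref{the required relations}), where abelianness of $\mathfrak{g}$ is essential), the four conditions on $\mathcal{I}$ of Propn.~(\ref{sufficient condition for complex structure}), and the K\"ahler identity of Propn.~(\ref{sufficient condition for Kahler structure}) --- are already in place, so the proof is pure assembly and no new obstacle arises.
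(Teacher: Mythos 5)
Your proposal is correct and follows essentially the same route as the paper: the theorem is stated there as the direct combination of Thm.~(\ref{N=(2,2) from dynamical system}) and Propn.~(\ref{total number of Kahler structure}), together with the preceding observation that the two K\"ahler structures arising from the real structures $J_\pm$ are intertwined by the self-adjoint unitary $1\otimes\sigma\otimes 1$ and hence not counted separately. Your additional remarks on reconciling the upper index of the product and on the need to verify pairwise distinctness of the structures are consistent with what the paper does.
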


\begin{remark}\rm
This theorem tells us that in these cases of $C^*$-dynamical systems existence of K\"ahler structure depends only on the fact that whether the
densely defined unbounded symmetric operator $D:=\sum_{j=1}^{2k}\partial_j\otimes\gamma_j$ extends to a self-adjoint operator with compact resolvent.
\end{remark}

\begin{corollary}\label{compact ergodic action}
If $(\mathcal{A},\mathbb{T}^{2k},\alpha)$ is a $C^*$-dynamical system such that the action of $\,\mathbb{T}^{2k}$ is ergodic, then $\mathcal{A}$
inherits at least $\,\prod_{j=1,\,j\,odd}^{\,2k}(2k-j)$ different K\"ahler structures.
\end{corollary}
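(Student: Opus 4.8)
The plan is to deduce this corollary directly from Theorem \ref{final theorem} by checking that an ergodic action of $\mathbb{T}^{2k}$ supplies exactly the input data that theorem requires. First I would observe that $\mathbb{T}^{2k}$ is a connected, abelian Lie group of even dimension $2k$, so the structural hypotheses on the group are automatic. Next, since $\mathbb{T}^{2k}$ is compact and the action is ergodic, the unique $\mathbb{T}^{2k}$-invariant state is a faithful trace $\tau$ on $\mathcal{A}$ (this is the result of \cite{HLS} already recalled in this section); this promotes $(\mathcal{A},\mathbb{T}^{2k},\alpha)$ to a $C^*$-dynamical system $(\mathcal{A},\mathbb{T}^{2k},\alpha,\tau)$ equipped with a faithful $G$-invariant trace, as Theorem \ref{final theorem} demands.

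The remaining point is to verify that the candidate $N=1$ spectral data attached to $(\mathcal{A},\mathbb{T}^{2k},\alpha,\tau)$ is genuine, i,e. that $D=\sum_{j=1}^{2k}\partial_j\otimes\gamma_j$ acting on $L^2(\mathcal{A},\tau)\otimes\mathbb{C}^{2^k}$ is essentially self-adjoint with compact resolvent and $\varTheta$-summable. Here I would simply invoke the results of \cite{Gab}: compactness of the acting group gives essential self-adjointness of $D$ (Propn. $[4.1]$ in \cite{Gab}), while compactness together with ergodicity guarantees that the resulting even spectral triple is $\dim(\mathbb{T}^{2k})=2k$-summable, hence $\varTheta$-summable (Thm. $[5.4]$ in \cite{Gab}). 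Thus $(\mathcal{A},L^2(\mathcal{A},\tau)\otimes\mathbb{C}^{2^k},D,\sigma)$ is a $\varTheta$-summable even spectral triple, which is precisely a set of $N=1$ spectral data in the sense of Defn. \ref{N=1 spectral data}.

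With both hypotheses of Theorem \ref{final theorem} in place for $G=\mathbb{T}^{2k}$, the conclusion follows verbatim: $\mathcal{A}$ inherits at least $\prod_{j=1,\,j\,odd}^{2k}(2k-j)$ different K\"ahler structures. Since everything reduces to the already-established Theorem \ref{final theorem} together with the cited properties of compact ergodic actions, I do not expect a substantial obstacle here; the only subtlety worth flagging is that ergodicity is used twice — once to produce the faithful invariant trace $\tau$, and once (in tandem with compactness) to secure finite, hence $\varTheta$-, summability of the spectral triple.
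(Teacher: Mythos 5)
Your proposal is correct and follows essentially the same route as the paper: invoke \cite{HLS} to obtain the faithful invariant trace from compactness and ergodicity, invoke \cite{Gab} for essential self-adjointness and $2k$-summability (hence $\varTheta$-summability) of $D$, and then apply Theorem (\ref{final theorem}). The paper's own proof is just a more compressed version of the same argument.
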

\begin{proof}
Since $\mathbb{T}^{2k}$ is compact and the action is ergodic, the unique $\mathbb{T}^{2k}$-invariant state becomes a faithful trace (\cite{HLS}),
and we have a $\,2k$-summable (and hence $\varTheta$-summable \cite{Con7}) even spectral triple (Thm. $[5.4]$ in \cite{Gab}). Conclusion now
follows from Thm. (\ref{final theorem}).
\end{proof}

\begin{corollary}
For $n$ even, the noncommutative $n$-torus $\mathcal{A}_\Theta$ satisfies the $N=(2,2)$ K\"ahler spectral data, i,e. they are noncommutative
K\"ahler manifolds. Moreover, there are at least $\prod_{j=1,\,j\,odd}^{\,n}(n-j)$ different K\"ahler structures on $\mathcal{A}_\Theta$.
\end{corollary}
\begin{proof}
It is well known that the $C^*$-dynamical system $(\mathcal{A}_\Theta,\mathbb{T}^n,\alpha)$ on the noncommutative $n$-torus $\mathcal{A}_\Theta$,
where $\,\alpha_{\bf z}(U_k):=z_kU_k,\,k=1,\ldots,n$, equipped with a unique $\,\mathbb{T}^n$-invariant faithful trace given by
\begin{center}
$\tau\left(\sum\alpha_{(m_1,\ldots,m_n)}U_1^{m_1}\ldots U_n^{m_n}\right):=\alpha_{\bf 0}$
\end{center}
with $\,\alpha_{(m_1,\ldots,m_n)}\in\mathbb{S}(\mathbb{Z}^n)$, gives a $n$-summable (and hence $\varTheta$-summable \cite{Con7}) spectral triple
\begin{center}
$\left(\mathcal{A}_\Theta\,,\,\ell^2(\mathbb{Z}^n)\otimes\mathbb{C}^{2^{\lfloor n/2\rfloor}},\,D:=\sum_{j=1}^n\,\partial_j\otimes\gamma_j\right)\,.$
\end{center}
This spectral triple is even if $n$ is even, and we obtain a $N=1$ spectral data on $\mathcal{A}_\Theta$. Conclusion now follows from Thm.
(\ref{final theorem}).
\end{proof}

\begin{remark}\rm
As mentioned earlier in the Introduction, characterizing holomorphic structures on $n$-dimensional manifolds, with $n>2$, via positive Hochschild
cocycles is still open. That is why methods in (\cite{Con3}) does not extend to noncommutative higher dimensional tori.
\end{remark}

\begin{corollary}\label{the case of noncommutative torus}
For the noncommutative two-torus $\mathcal{A}_\theta$, with irrational $\,\theta$, represented faithfully on the Hilbert space $\,\ell^2(\mathbb{Z}^2)
\otimes\mathbb{C}^2\bigoplus\ell^2(\mathbb{Z}^2)\otimes\mathbb{C}^2\cong\ell^2(\mathbb{Z}^2)\otimes\mathbb{C}^4$ by diagonal operator,
\begin{center}
$\gamma_1\,=\,i\begin{pmatrix}
0 & 1\\
1 & 0
\end{pmatrix}\quad,\quad\gamma_2\,=\,i\begin{pmatrix}
0 & -i\\
i & 0
\end{pmatrix}\quad,\quad\sigma\,=\,\begin{pmatrix}
1 & 0\\
0 & -1
\end{pmatrix}\,.$ 
\end{center}
The Dirac operator is given by
\begin{center}
$D\,=\,\begin{pmatrix}
             0 & i\partial_1+\partial_2 & 0 & 0\\
             i\partial_1-\partial_2 & 0 & 0 & 0\\
             0 & 0 & 0 & i\partial_1+\partial_2\\
             0 & 0 & i\partial_1-\partial_2 & 0 
            \end{pmatrix}$ 
\end{center}
with the grading operator $\widetilde{\gamma}$ and the Hodge operator $\star$ as
\begin{center}
$\widetilde{\gamma}=\begin{pmatrix}
1 & 0 & 0 & 0\\
0 & -1 & 0 & 0\\
0 & 0 & -1 & 0\\
0 & 0 & 0 & 1
\end{pmatrix}\quad,\quad\star=\begin{pmatrix}
1 & 0 & 0 & 0\\
0 & -1 & 0 & 0\\
0 & 0 & 1 & 0\\
0 & 0 & 0 & -1
\end{pmatrix}\,$.
\end{center}
The two set of unitary equivalent K\"ahler differentials are given by
\begin{align*}
\partial &\,= \begin{pmatrix}
0 & 0 & 0 & 0\\
\frac{1}{2}(i\partial_1-\partial_2) & 0 & 0 & 0\\
\frac{i\varepsilon^\prime}{2}(i\partial_1-\partial_2) & 0 & 0 & 0\\
0 & -\frac{i\varepsilon^\prime}{2}(i\partial_1-\partial_2) & \frac{1}{2}(i\partial_1-\partial_2) & 0\\
\end{pmatrix}\\
\overline{\partial} &\,= \begin{pmatrix}
0 & \frac{1}{2}(i\partial_1+\partial_2) & \frac{i\varepsilon^\prime}{2}(i\partial_1+\partial_2) & 0\\
0 & 0 & 0 & -\frac{i\varepsilon^\prime}{2}(i\partial_1+\partial_2)\\
0 & 0 & 0 & \frac{1}{2}(i\partial_1+\partial_2)\\
0 & 0 & 0 & 0 
\end{pmatrix}
\end{align*}
with $\,\varepsilon^\prime=\pm 1$, and the nilpotent differential is $\,d:=\partial+\overline{\partial}$ with $d+d^*=D$.
\end{corollary}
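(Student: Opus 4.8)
The plan is to obtain the corollary as a direct specialization of Theorem~(\ref{N=(2,2) from dynamical system}) and Theorem~(\ref{final theorem}) to the $C^*$-dynamical system $(\mathcal{A}_\theta,\mathbb{T}^2,\alpha)$, for which $\dim G=2k=2$ and $N=2^{\lfloor 2/2\rfloor}=2$; once this is set up, every analytic assertion in the corollary is already furnished by those theorems, and what remains is explicit bookkeeping with $2\times 2$ and $4\times 4$ matrices. First I would record the $N=1$ data: as in the preceding corollary on the noncommutative $n$-torus, $(\mathcal{A}_\theta,\mathbb{T}^2,\alpha)$ together with its unique invariant faithful trace $\tau$ has $L^2(\mathcal{A}_\theta,\tau)\cong\ell^2(\mathbb{Z}^2)$ and produces a $2$-summable, hence $\varTheta$-summable, even spectral triple $\bigl(\mathcal{A}_\theta,\ell^2(\mathbb{Z}^2)\otimes\mathbb{C}^2,\sum_{j=1}^2\partial_j\otimes\gamma_j,\sigma\bigr)$. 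A one-line verification shows that the displayed $\gamma_1,\gamma_2$ satisfy $\gamma_j^{\,*}=-\gamma_j$ and $\gamma_j\gamma_k+\gamma_k\gamma_j=-2\delta_{jk}$, that the displayed $\sigma$ is a $\mathbb{Z}_2$-grading anticommuting with both, and that $\gamma_1\gamma_2=-i\sigma$ (consistent with $e_1e_2\mapsto-i\sigma$ since $k=1$ is odd), so that $\gamma_1,\gamma_2,\sigma$ realize the irreducible representation of $\mathbb{C}l(2)$ with grading of Proposition~(\ref{Clifford representation}); multiplying out $\partial_1\gamma_1+\partial_2\gamma_2$ then yields the displayed $2\times 2$ Dirac operator.

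Next I would run the extension procedure of Proposition~(\ref{N=(1,1) from dynamical system}) verbatim. The enlarged Hilbert space is $\widetilde{\mathcal H}=L^2(\mathcal{A}_\theta,\tau)\otimes\mathbb{C}^2\otimes\mathbb{C}^2\cong\ell^2(\mathbb{Z}^2)\otimes\mathbb{C}^4$; by Proposition~(\ref{self-adjoint operator}) the two operators are $\mathfrak D=\sum_{j=1}^2\partial_j\otimes 1_2\otimes\gamma_j$ and $\overline{\mathfrak D}=-\varepsilon'\sum_{j=1}^2\partial_j\otimes\gamma_j\otimes\sigma$, the grading is $\widetilde\gamma=1\otimes\sigma\otimes\sigma$, and the Hodge operator is $\star=1\otimes 1_2\otimes\sigma$. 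Reading $1_2\otimes\gamma_j$, $\sigma\otimes\sigma$ and $1_2\otimes\sigma$ as $4\times 4$ matrices in the ordered basis of $\mathbb{C}^2\otimes\mathbb{C}^2$ reproduces precisely the block matrices $D$, $\widetilde\gamma$ and $\star$ in the statement, and since $d=\tfrac12(\mathfrak D-i\overline{\mathfrak D})$ and $d^{\,*}=\tfrac12(\mathfrak D+i\overline{\mathfrak D})$ one gets $d+d^{\,*}=\mathfrak D=D$ at once.

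Finally, for the K\"ahler differentials I would invoke that for $2k=2$ there is a single admissible choice in Proposition~(\ref{total number of Kahler structure}), namely $\widetilde{\mathcal I}=\tfrac12\mathcal I_{(1,2)}=\tfrac12\bigl(1\otimes\gamma_1\gamma_2+\gamma_1\gamma_2\otimes 1\bigr)$, so that the computation carried out in the proof of Theorem~(\ref{N=(2,2) from dynamical system}) gives $d_2=[\mathcal I,d]=\tfrac12(\partial_1\otimes1\otimes\gamma_2-\partial_2\otimes1\otimes\gamma_1)+\tfrac{i\varepsilon'}{2}(\partial_1\otimes\gamma_2\otimes\sigma-\partial_2\otimes\gamma_1\otimes\sigma)$. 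Setting $\partial=\tfrac12(d-id_2)$ and $\overline\partial=\tfrac12(d+id_2)$ and collecting terms yields $\partial=\tfrac14\bigl[(\partial_1+i\partial_2)\otimes 1_2\otimes(\gamma_1-i\gamma_2)+\varepsilon'(i\partial_1-\partial_2)\otimes(\gamma_1-i\gamma_2)\otimes\sigma\bigr]$ and the conjugate expression for $\overline\partial$ involving $\gamma_1+i\gamma_2$ and $i\partial_1+\partial_2$; since $\gamma_1\mp i\gamma_2$ are the two nilpotent off-diagonal $2\times 2$ matrices, writing everything out as $4\times 4$ matrices recovers the two displayed matrices for $\partial$ and $\overline\partial$, with $\varepsilon'=\pm 1$ indexing the two real structures $J_\pm$ (unitarily conjugate via $1\otimes\sigma\otimes 1$, as noted before the corollary), and then $d=\partial+\overline\partial$ and $d+d^{\,*}=D$ are formal. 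The hard part is essentially nonexistent here: essential self-adjointness, $\varTheta$-summability, boundedness of $[d,a]$, the relations $\mathfrak D^2=\overline{\mathfrak D}^2$ and $\{\mathfrak D,\overline{\mathfrak D}\}=0$, and the K\"ahler identities are all supplied by Theorem~(\ref{final theorem}); the only point demanding care is keeping track of the order of the three tensor factors and doing the elementary $4\times 4$ matrix arithmetic correctly.
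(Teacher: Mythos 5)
Your proposal is correct and follows essentially the same route as the paper: specialize the general construction ($\mathfrak{D}=1\otimes D$, $\overline{\mathfrak{D}}=-\varepsilon^\prime D\otimes\sigma$, $d=\tfrac12(\mathfrak{D}-i\overline{\mathfrak{D}})$, $d_2=[\mathcal{I},d]$ with $\widetilde{\mathcal{I}}=\tfrac12\mathcal{I}_{(1,2)}$, then $\partial=\tfrac12(d-id_2)$, $\overline{\partial}=\tfrac12(d+id_2)$) to $2k=2$, $N=2$, and write everything as $4\times4$ matrices. Your explicit verifications of the Clifford relations and the collected forms of $\partial,\overline{\partial}$ in terms of $\gamma_1\mp i\gamma_2$ are consistent with the displayed matrices and merely make explicit what the paper leaves as routine arithmetic.
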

\begin{proof}
The matrices $\gamma_1,\gamma_2$ and $\sigma$ are obtained from the explicit representation of $\mathbb{C}l(2)$ on $M_2(\mathbb{C})$ (see Propn.
[\ref{Clifford representation}]). From Propn. (\ref{N=(1,1) from dynamical system}), since $d=\frac{1}{2}(\mathfrak{D}-i\overline{\mathfrak{D}}\,)$,
we get
\begin{center}
$d=\frac{1}{2}(\partial_1\otimes 1\otimes\gamma_1+\partial_2\otimes 1\otimes\gamma_2)+\frac{i\varepsilon^\prime}{2}(\partial_1\otimes\gamma_1\otimes
\sigma+\partial_2\otimes\gamma_2\otimes\sigma)$
\end{center}
and from Thm. (\ref{N=(2,2) from dynamical system}), we get
\begin{center}
$d_2=\frac{1}{2}(\partial_1\otimes 1\otimes\gamma_2-\partial_2\otimes 1\otimes\gamma_1)+\frac{i\varepsilon^\prime}{2}(\partial_1\otimes\gamma_2
\otimes\sigma-\partial_2\otimes\gamma_1\otimes\sigma)$
\end{center}
The expression for the Dirac operator $D$ is then clear since, $D=d+d^*=\mathfrak{D}$. The two set of unitary equivalent K\"ahler differentials are
given by $\partial=\frac{1}{2}(d-id_2)$ and $\overline{\partial}=\frac{1}{2}(d+id_2)$ with $\,\varepsilon^\prime=\pm 1$.
\end{proof}

\begin{remark}\rm
The differential $\,\overline{\partial}\,$ in the above Cor. (\ref{the case of noncommutative torus}) coincides with the complex structure obtained
in (\cite{Con3}) from cyclic cohomology and using the equivalence of conformal and complex structures in two dimensions. This is further considered
in (\cite{PS}). We will come back to it towards the end of next section.
\end{remark}
\bigskip


\section{Category of Holomorphic vector bundle}

\subsection{Holomorphic vector bundle}\texorpdfstring{$\newline$}{Lg}
Let $(\mathcal{A},\mathcal{H},\partial,\overline{\partial},T,\overline{T},\gamma,\star)$ be a Hermitian (or in particular, $N=(2,2)$ K\"ahler)
spectral data over the unital algebra $\mathcal{A}$. Recall the space of complex differential forms from Section $[2.3.2]$ and notion of integration
from Section $[2.3.3]$ in (\cite{FGR2}). A crucial orthogonality property is mentioned in Propn. $[2.35]$ in (\cite{FGR2}). However, for this
subsection it is enough to recall that
\begin{align*}
\Omega^{1,0}_{\partial,\overline{\partial}}(\mathcal{A}):=span\{a[\partial,b]:a,b\in\mathcal{A}\}\quad &, \quad
\Omega^{2,0}_{\partial,\overline{\partial}}(\mathcal{A}):=span\{a[\partial,b][\partial,c]:a,b,c\in\mathcal{A}\}\,\,,\\
\Omega^{0,1}_{\partial,\overline{\partial}}(\mathcal{A}):=span\{a[\,\overline{\partial},b]:a,b\in\mathcal{A}\}\quad &, \quad
\Omega^{0,2}_{\partial,\overline{\partial}}(\mathcal{A}):=span\{a[\,\overline{\partial},b][\,\overline{\partial},c]:a,b,c\in\mathcal{A}\}\,\,.
\end{align*}

\begin{definition}[\cite{KLS}]\label{algebra of holomorphic elements}
The algebra of holomorphic elements in $\mathcal{A}$ is defined as
\begin{center}
$\mathcal{O}(\mathcal{A}):=Ker\left\{\,\overline{\partial}:\mathcal{A}\longrightarrow\Omega^{0,1}_{\partial,\overline{\partial}}(\mathcal{A})\right\}\,.$
\end{center}
This is a $\mathbb{C}$-subalgebra of $\mathcal{A}$.
\end{definition}

\begin{definition}[\cite{KLS}]\label{holomorphic vector bundle}
A holomorphic structure on a f.g.p. left $\mathcal{A}$-module $\mathcal{E}$ is a flat $\overline{\partial}$-connection, i,e. connection $\nabla:
\mathcal{E}\longrightarrow\Omega^{0,1}_{\partial,\overline{\partial}}(\mathcal{A})\otimes_\mathcal{A}\mathcal{E}$ such that the associated
$\overline{\partial}$-curvature $\varTheta\in\mathcal{H}om_\mathcal{A}\left(\mathcal{E}\,,\Omega^{0,2}_{\partial,\overline{\partial}}(\mathcal{A})
\otimes_\mathcal{A}\mathcal{E}\right)$ vanishes. The pair $(\mathcal{E},\nabla)$ is called a holomorphic vector bundle over $\mathcal{A}$.
\end{definition}

\begin{definition}[\cite{KLS}]\label{holomorphic section}
If $(\mathcal{E},\nabla)$ is a holomorphic vector bundle over $\mathcal{A}$ then
\begin{center}
$H^0(\mathcal{E},\nabla):=ker\left\{\nabla:\mathcal{E}\longrightarrow\Omega^{0,1}_{\partial,\overline{\partial}}(\mathcal{A})\otimes_\mathcal{A}
\mathcal{E}\right\}$
\end{center}
is called the space of holomorphic sections on $\,\mathcal{E}$.
\end{definition}

\begin{definition}[\cite{BS}]
The holomorphic vector bundles are the objects in a category $\mathcal{H}o\ell(\mathcal{A})$. A morphism $\Phi:(\mathcal{E}_1,\nabla_1)\longrightarrow
(\mathcal{E}_2,\nabla_2)$ is a left $\mathcal{A}$-module map $\phi:\mathcal{E}_1\rightarrow\mathcal{E}_2$ satisfying $\nabla_2\circ\phi=(id\otimes
\phi)\circ\nabla_1$.
\end{definition}

\begin{remark}\rm
\begin{enumerate}
\item It follows from the definition of connection that $\,H^0(\mathcal{E},\nabla)$ is a left $\mathcal{O}(\mathcal{A})$-module.
\item Recall that in the classical case, a vector bundle on a complex manifold is holomorphic if and only if it admits a flat $\overline{\partial}$-connection.
\end{enumerate}
\end{remark}

Consider a f.g.p. left module $\mathcal{E}$ over $\mathcal{A}$. Then there exists a positive integer $m$ and a left $\mathcal{A}$-module homomorphism
$pr:\mathcal{A}^m\longrightarrow\mathcal{E}$. By definition, there exists a left $\mathcal{A}$-module $\mathcal{F}$ such that $\mathcal{E}\oplus
\mathcal{F}\cong\mathcal{A}^m$ and denote $i:\mathcal{E}\longrightarrow\mathcal{A}^m$ to be the inclusion map determined by this isomorphism. We
have $pr\circ i=id$ on $\mathcal{E}$.

\begin{lemma}\label{to be used in next lemma}
Any $\overline{\partial}$-connection $\widetilde\nabla$ on the free module $\mathcal{A}^m$ induces a $\overline{\partial}$-connection $\nabla$ on
$\,\mathcal{E}$.
\end{lemma}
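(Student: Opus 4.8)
The plan is simply to transport the free $\overline{\partial}$-connection $\widetilde\nabla$ to $\mathcal{E}$ along the splitting $pr,i$ of the projection $\mathcal{A}^m\twoheadrightarrow\mathcal{E}$. I would set
\begin{center}
$\nabla := \big(\mathrm{id}_{\Omega^{0,1}_{\partial,\overline{\partial}}(\mathcal{A})}\otimes pr\big)\circ\widetilde\nabla\circ i : \mathcal{E}\longrightarrow\Omega^{0,1}_{\partial,\overline{\partial}}(\mathcal{A})\otimes_\mathcal{A}\mathcal{E}\,.$
\end{center}
The composite makes sense because $pr$ is a left $\mathcal{A}$-module homomorphism and $\Omega^{0,1}_{\partial,\overline{\partial}}(\mathcal{A})$ is an $\mathcal{A}$-bimodule, so $\mathrm{id}\otimes pr$ descends to a well-defined map on the balanced tensor product $\Omega^{0,1}_{\partial,\overline{\partial}}(\mathcal{A})\otimes_\mathcal{A}(-)$; concretely, the relation $\omega\,a\otimes v=\omega\otimes a\,v$ is respected precisely because $pr$ intertwines the left $\mathcal{A}$-actions on $\mathcal{A}^m$ and $\mathcal{E}$.

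It then remains to verify that $\nabla$ is a $\overline{\partial}$-connection. $\mathbb{C}$-linearity is clear since $i$, $\widetilde\nabla$ and $\mathrm{id}\otimes pr$ are all $\mathbb{C}$-linear. For the $\overline{\partial}$-Leibniz rule, fix $a\in\mathcal{A}$ and $\xi\in\mathcal{E}$ and compute, using in turn that $i$ is left $\mathcal{A}$-linear, that $\widetilde\nabla$ satisfies $\widetilde\nabla(a\zeta)=a\,\widetilde\nabla\zeta+\overline{\partial}a\otimes\zeta$, that $\mathrm{id}\otimes pr$ is left $\mathcal{A}$-linear, and finally that $pr\circ i=\mathrm{id}_\mathcal{E}$:
\begin{align*}
\nabla(a\xi) &= (\mathrm{id}\otimes pr)\,\widetilde\nabla\big(a\,i(\xi)\big) = (\mathrm{id}\otimes pr)\big(a\,\widetilde\nabla(i\xi)+\overline{\partial}a\otimes i\xi\big)\\
&= a\,(\mathrm{id}\otimes pr)\widetilde\nabla(i\xi)+\overline{\partial}a\otimes pr(i\xi) = a\,\nabla\xi+\overline{\partial}a\otimes\xi\,.
\end{align*}
Hence $\nabla$ is a connection of the required type on $\mathcal{E}$.

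I do not expect any genuine obstacle here: this is the standard argument that $\overline{\partial}$-connections restrict to direct summands, and the only point needing a sentence of care is the well-definedness of $\mathrm{id}\otimes pr$ discussed above. I would, however, flag explicitly that flatness is \emph{not} part of this statement --- the induced $\overline{\partial}$-curvature $\nabla\circ\nabla\in\mathcal{H}om_\mathcal{A}\!\left(\mathcal{E},\Omega^{0,2}_{\partial,\overline{\partial}}(\mathcal{A})\otimes_\mathcal{A}\mathcal{E}\right)$ need not vanish even when $\widetilde\nabla$ is flat (already for the Grassmann $\overline{\partial}$-connection on a proper direct summand of a trivial bundle, where the curvature is governed by the off-diagonal components of $\overline{\partial}$ applied to the defining idempotent), so the upgrade to a genuine holomorphic vector bundle is deferred to the following lemma.
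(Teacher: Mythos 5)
Your proposal is correct and is essentially identical to the paper's own proof: the paper also defines $\nabla=(id\otimes pr)\circ\widetilde{\nabla}\circ i$ and verifies the $\overline{\partial}$-Leibniz rule using $pr\circ i=id_{\mathcal{E}}$. Your extra remarks on the well-definedness of $id\otimes pr$ and on flatness not being claimed are sensible but do not change the argument.
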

\begin{proof}
Given such $\widetilde\nabla$, define
\begin{align*}
\nabla:\mathcal{E} &\longrightarrow \Omega_{\partial,\overline\partial}^{0,1}(\mathcal{A})\otimes_\mathcal{A}\mathcal{E}\\
\nabla &= (id\otimes pr)\circ\widetilde{\nabla}\circ i\,.
\end{align*}
Clearly, $\nabla$ is a $\mathbb{C}$-linear map. Now, for all $a\in\mathcal{A}$ and $\xi\in\mathcal{E}$,
\begin{eqnarray*}
\nabla(a\xi) & = & (id\otimes pr)\circ\widetilde{\nabla}(ai(\xi))\\
& = & (id\otimes pr)\left([\,\overline\partial,a]\otimes i(\xi)+a\widetilde{\nabla}\circ i(\xi)\right)\\
& = & [\,\overline\partial,a]\otimes\xi+a\nabla(\xi)
\end{eqnarray*}
proving $\nabla$ is a $\overline\partial$-connection on $\mathcal{E}$.
\end{proof}

Moreover, the converse is also true.

\begin{proposition}
Any $\overline{\partial}$-connection $\nabla$ on $\mathcal{E}$ is induced by a $\overline{\partial}$-connection $\widetilde\nabla$ on the free
module $\mathcal{A}^m\,$.
\end{proposition}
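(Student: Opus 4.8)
The plan is to use the standard principle that the set of $\overline{\partial}$-connections on a fixed f.g.p.\ left $\mathcal{A}$-module is an affine space modelled on the space of $\mathcal{A}$-linear maps $\mathcal{H}om_\mathcal{A}\bigl(\mathcal{E},\Omega_{\partial,\overline\partial}^{0,1}(\mathcal{A})\otimes_\mathcal{A}\mathcal{E}\bigr)$. One \emph{cannot} simply conjugate $\nabla$ by $pr$ and $i$ to land on $\mathcal{A}^m$, because a connection is not $\mathcal{A}$-linear and such a conjugate would violate the Leibniz rule; the correct route is to fix a reference $\overline{\partial}$-connection on the free module, measure the discrepancy with $\nabla$ (which \emph{is} $\mathcal{A}$-linear), lift that discrepancy freely, and add it back.

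First I would introduce the canonical (Grassmannian) $\overline{\partial}$-connection on the free module,
\[
\nabla_0:\mathcal{A}^m\longrightarrow\Omega_{\partial,\overline\partial}^{0,1}(\mathcal{A})\otimes_\mathcal{A}\mathcal{A}^m,\qquad
\nabla_0(a_1,\dots,a_m):=\bigl([\,\overline{\partial},a_1],\dots,[\,\overline{\partial},a_m]\bigr),
\]
which is a $\overline{\partial}$-connection because $[\,\overline{\partial},a]\in\Omega_{\partial,\overline\partial}^{0,1}(\mathcal{A})$ for all $a$ and $[\,\overline{\partial},\,\cdot\,]$ is a derivation. By Lemma \ref{to be used in next lemma}, $\nabla_0$ induces the $\overline{\partial}$-connection $\nabla_{\mathrm{Gr}}:=(\mathrm{id}\otimes pr)\circ\nabla_0\circ i$ on $\mathcal{E}$.

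Next I would observe that $\alpha:=\nabla-\nabla_{\mathrm{Gr}}$ is left $\mathcal{A}$-linear, since the two Leibniz terms cancel, so $\alpha\in\mathcal{H}om_\mathcal{A}\bigl(\mathcal{E},\Omega_{\partial,\overline\partial}^{0,1}(\mathcal{A})\otimes_\mathcal{A}\mathcal{E}\bigr)$. I then lift $\alpha$ to the free module by setting $\widetilde\alpha:=(\mathrm{id}\otimes i)\circ\alpha\circ pr$, a composite of left $\mathcal{A}$-module maps, hence an element of $\mathcal{H}om_\mathcal{A}\bigl(\mathcal{A}^m,\Omega_{\partial,\overline\partial}^{0,1}(\mathcal{A})\otimes_\mathcal{A}\mathcal{A}^m\bigr)$, and I define $\widetilde\nabla:=\nabla_0+\widetilde\alpha$. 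Since a $\overline{\partial}$-connection plus an $\mathcal{A}$-linear map is again a $\overline{\partial}$-connection, $\widetilde\nabla$ is a $\overline{\partial}$-connection on $\mathcal{A}^m$.

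Finally I would check that $\widetilde\nabla$ induces $\nabla$ in the sense of Lemma \ref{to be used in next lemma}. Using $pr\circ i=\mathrm{id}_\mathcal{E}$ and functoriality of the tensor product over $\mathcal{A}$,
\[
(\mathrm{id}\otimes pr)\circ\widetilde\nabla\circ i
=(\mathrm{id}\otimes pr)\circ\nabla_0\circ i+\bigl(\mathrm{id}\otimes(pr\circ i)\bigr)\circ\alpha\circ(pr\circ i)
=\nabla_{\mathrm{Gr}}+\alpha=\nabla .
\]
There is no genuine obstacle here; the only point requiring a moment's care is that $\widetilde\alpha$ is well defined on the balanced tensor product and that the pair $\mathrm{id}\otimes i$, $\mathrm{id}\otimes pr$ collapses to the identity via $pr\circ i=\mathrm{id}_\mathcal{E}$. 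The actual content of the proposition is thus the affine structure of the space of $\overline{\partial}$-connections together with the explicit existence of the canonical connection $\nabla_0$ on the free module.
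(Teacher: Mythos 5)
Your proof is correct and follows essentially the same strategy as the paper: realize the space of $\overline{\partial}$-connections on $\mathcal{E}$ as an affine space over $\mathcal{H}om_\mathcal{A}(\mathcal{E},\Omega^{0,1}_{\partial,\overline{\partial}}(\mathcal{A})\otimes_\mathcal{A}\mathcal{E})$, compare $\nabla$ with the connection induced from a reference connection on $\mathcal{A}^m$, and lift the $\mathcal{A}$-linear discrepancy back to the free module. The only (harmless) differences are that you take the explicit Grassmannian connection $\nabla_0$ as reference where the paper starts from an arbitrary $\widetilde\nabla$ on $\mathcal{A}^m$, and you lift the discrepancy constructively via $id\otimes i$ followed by precomposition with $pr$, whereas the paper obtains the lift abstractly from projectivity of $\mathcal{E}$ and surjectivity of $id\otimes pr$ before precomposing with $pr$ --- both lifts collapse correctly under $pr\circ i=id_\mathcal{E}$.
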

\begin{proof}
Start with a $\overline{\partial}$-connection $\widetilde\nabla$ on the free module $\mathcal{A}^m\,$. By previous Lemma (\ref{to be used in next lemma}),
we get a $\overline{\partial}$-connection $\nabla$ on $\mathcal{E}$ by the formula $\nabla=(id\otimes pr)\circ\widetilde{\nabla}\circ i$. Now, let
$\nabla^\prime$ be any other $\overline{\partial}$-connection on $\mathcal{E}$. Then $\nabla^\prime-\nabla\in\mathcal{H}om_\mathcal{A}(\mathcal{E},
\Omega_{\partial,\overline\partial}^{0,1}(\mathcal{A})\otimes_\mathcal{A}\mathcal{E})$. Since,
\begin{center}
$id\otimes pr:\Omega_{\partial,\overline\partial}^{0,1}(\mathcal{A})\otimes_\mathcal{A}\mathcal{A}^m\longrightarrow\Omega_{\partial,\overline\partial}^{0,1}
(\mathcal{A})\otimes_\mathcal{A}\mathcal{E}$
\end{center}
is surjective and $\mathcal{E}$ is a projective module, there exists a module map
\begin{center}
$\phi:\mathcal{E}\longrightarrow\Omega_{\partial,\overline\partial}^{0,1}(\mathcal{A})\otimes_\mathcal{A}\mathcal{A}^m$
\end{center}
such that $\nabla^\prime-\nabla=(id\otimes pr)\circ\phi$. Then, $\,\widetilde\phi=\phi\circ pr\in\mathcal{H}om_\mathcal{A}(\mathcal{A}^m\,,\,
\Omega_{\partial,\overline\partial}^{0,1}(\mathcal{A})\otimes_\mathcal{A}\mathcal{A}^m)$ and hence, $\widetilde\nabla+\widetilde\phi$ is a
$\overline\partial$-connection on $\mathcal{A}^m\,$. The associated connection on $\mathcal{E}$ is
\begin{eqnarray*}
(id\otimes pr)\circ(\widetilde\nabla+\widetilde\phi)\circ i & = & \nabla+(id\otimes pr)\circ\phi\\
& = & \nabla^\prime
\end{eqnarray*}
i,e. $\nabla^\prime$ is induced by the $\overline{\partial}$-connection $\widetilde\nabla+\widetilde\phi$ on the free module $\mathcal{A}^m$.
\end{proof}

\begin{proposition}\label{holomorphic structure on free module}
Any free module over $\mathcal{A}$ is a holomorphic vector bundle.
\end{proposition}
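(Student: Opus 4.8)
The plan is to equip the free module with its canonical (trivial) $\overline{\partial}$-connection and check directly that it is flat, so that Defn. (\ref{holomorphic vector bundle}) is satisfied. Let $\mathcal{E}_0=\mathcal{A}^n$ with standard left $\mathcal{A}$-module basis $\{e_1,\dots,e_n\}$, and identify $\Omega^{0,1}_{\partial,\overline{\partial}}(\mathcal{A})\otimes_\mathcal{A}\mathcal{E}_0$ with $\bigoplus_{j=1}^n\Omega^{0,1}_{\partial,\overline{\partial}}(\mathcal{A})\otimes e_j$. Define
\[
\nabla_0:\mathcal{E}_0\longrightarrow\Omega^{0,1}_{\partial,\overline{\partial}}(\mathcal{A})\otimes_\mathcal{A}\mathcal{E}_0,\qquad
\nabla_0\Bigl(\sum_{j=1}^n\xi_j e_j\Bigr):=\sum_{j=1}^n[\,\overline{\partial},\xi_j]\otimes e_j .
\]
This is well defined since $a\mapsto[\,\overline{\partial},a]$ maps $\mathcal{A}$ into $\Omega^{0,1}_{\partial,\overline{\partial}}(\mathcal{A})$ by the very definition of that space, and in particular $\nabla_0 e_j=[\,\overline{\partial},1]\otimes e_j=0$ for each $j$.

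First I would check that $\nabla_0$ is a $\overline{\partial}$-connection in the sense of Defn. (\ref{holomorphic vector bundle}): applying the derivation identity $[\,\overline{\partial},ab]=[\,\overline{\partial},a]b+a[\,\overline{\partial},b]$ coordinatewise gives $\nabla_0(a\xi)=[\,\overline{\partial},a]\otimes\xi+a\,\nabla_0(\xi)$ for all $a\in\mathcal{A}$ and $\xi\in\mathcal{E}_0$, which is precisely the required Leibniz rule. Next I would extend $\nabla_0$ to a map $\Omega^{0,\bullet}_{\partial,\overline{\partial}}(\mathcal{A})\otimes_\mathcal{A}\mathcal{E}_0\to\Omega^{0,\bullet+1}_{\partial,\overline{\partial}}(\mathcal{A})\otimes_\mathcal{A}\mathcal{E}_0$ by the usual rule $\nabla_0(\omega\otimes\xi)=(-1)^{\deg\omega}\omega\,\nabla_0(\xi)+\overline{\partial}\omega\otimes\xi$ and compute the associated $\overline{\partial}$-curvature $\varTheta=\nabla_0\circ\nabla_0$. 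Because $\nabla_0 e_j=0$, for $\xi=\sum_j\xi_j e_j$ one obtains $\varTheta(\xi)=\sum_j\overline{\partial}\bigl([\,\overline{\partial},\xi_j]\bigr)\otimes e_j=\sum_j\overline{\partial}^{\,2}\xi_j\otimes e_j$. Now $\overline{\partial}^{\,2}=0$ as a map $\mathcal{A}\to\Omega^{0,2}_{\partial,\overline{\partial}}(\mathcal{A})$, since $\overline{\partial}^{\,2}a=\overline{\partial}\bigl(1\cdot[\,\overline{\partial},a]\bigr)=[\,\overline{\partial},1]\,[\,\overline{\partial},a]=0$; this reflects condition $(2)(a)$ of Defn. (\ref{N=(2,2) spectral data}) together with the description of the complex differential forms in (Section $[2.3.2]$ in \cite{FGR2}). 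Hence $\varTheta=0$, so $\nabla_0$ is a flat $\overline{\partial}$-connection and $(\mathcal{E}_0,\nabla_0)$ is a holomorphic vector bundle.

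The only point that needs care is that the differential $\overline{\partial}$ be well defined on the complex differential forms $\Omega^{\bullet,\bullet}_{\partial,\overline{\partial}}(\mathcal{A})$ --- that is, that there is no ``junk-form'' obstruction to the identity $\overline{\partial}^{\,2}=0$ on forms --- but this is built into the FGR2 construction recalled at the start of this section and may simply be invoked. Everything else is a routine bimodule computation. (Note that the same connection $\nabla_0$, combined with Lemma (\ref{to be used in next lemma}), also furnishes every f.g.p.\ left $\mathcal{A}$-module with a canonical ``Grassmannian'' $\overline{\partial}$-connection, though the induced connection need not be flat in general.)
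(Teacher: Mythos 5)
Your proposal is correct and follows essentially the same route as the paper: the paper also takes the Grassmannian connection $\nabla_0(a_1,\dots,a_m)=([\,\overline{\partial},a_1],\dots,[\,\overline{\partial},a_m])$, uses $\nabla_0 e_j=0$, and kills the curvature via $[\,\overline{\partial},1][\,\overline{\partial},a_j]=0$, which is exactly your computation $\overline{\partial}^{\,2}a=[\,\overline{\partial},1][\,\overline{\partial},a]=0$. Your added remark about the junk-form issue is a reasonable point of care that the paper leaves implicit by citing the FGR2 construction of complex differential forms.
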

\begin{proof}
Let $\mathcal{A}^m$ be a free module over $\mathcal{A}$ of rank $m$. Since $\Omega^{0,1}_{\partial,\overline{\partial}}(\mathcal{A})
\otimes_\mathcal{A}\mathcal{A}^m\cong\left(\Omega^{0,1}_{\partial,\overline{\partial}}(\mathcal{A})\right)^m$, define
\begin{align*}
\nabla_0:\mathcal{A}^m &\longrightarrow \Omega^{0,1}_{\partial,\overline{\partial}}(\mathcal{A})\otimes_\mathcal{A}\mathcal{A}^m\\
(a_1,\ldots,a_m) &\longmapsto \left([\,\overline{\partial},a_1],\ldots,[\,\overline{\partial},a_m]\right)
\end{align*}
It is easy to check that $\nabla_0$ is a $\overline{\partial}$-connection. Let $\{e_1,\ldots,e_m\}$ denotes the standard free $\mathcal{A}$-module
basis of $\mathcal{A}^m$. Then, the associated curvature becomes
\begin{eqnarray*}
\varTheta_{\nabla_0}(a_1,\ldots,a_m) & = & \nabla_0\left([\,\overline{\partial},a_1],\ldots,[\,\overline{\partial},a_m]\right)\\
& = & \sum_{j=1}^m\nabla_0([\,\overline{\partial},a_j]\otimes e_j)\\
& = & \sum_{j=1}^m-[\,\overline{\partial},a_j]\nabla_0(e_j)+[\,\overline{\partial},1][\,\overline{\partial},a_j]\otimes e_j\\
& = & 0
\end{eqnarray*}
since, $\nabla_0(e_j)=0$. Hence, $\nabla_0$ is flat $\overline{\partial}$-connection. This shows that $(\mathcal{A}^m,\nabla_0)$ is a holomorphic
vector bundle over $\mathcal{A}$.
\end{proof}

\begin{corollary}\label{space of holomorphic section}
The space of holomorphic sections of any free module $\mathcal{E}_0=\mathcal{A}^m$ over $\mathcal{A}$ is a free $\mathcal{O}(\mathcal{A})$-module
of rank $m$.
\end{corollary}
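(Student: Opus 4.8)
The plan is to identify $H^0(\mathcal{E}_0,\nabla_0)$ explicitly with $\mathcal{O}(\mathcal{A})^m$ sitting inside $\mathcal{A}^m$, and then to read off freeness directly from the standard basis. First I would unwind the definitions. By Definition \ref{holomorphic section}, $H^0(\mathcal{E}_0,\nabla_0)=\ker\{\nabla_0:\mathcal{A}^m\to\Omega^{0,1}_{\partial,\overline{\partial}}(\mathcal{A})\otimes_\mathcal{A}\mathcal{A}^m\}$, where $\nabla_0$ is the canonical flat $\overline{\partial}$-connection from the proof of Proposition \ref{holomorphic structure on free module}. Under the canonical identification $\Omega^{0,1}_{\partial,\overline{\partial}}(\mathcal{A})\otimes_\mathcal{A}\mathcal{A}^m\cong\left(\Omega^{0,1}_{\partial,\overline{\partial}}(\mathcal{A})\right)^m$ we have the explicit formula $\nabla_0(a_1,\dots,a_m)=\left([\,\overline{\partial},a_1],\dots,[\,\overline{\partial},a_m]\right)$. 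Since the right-hand side is a direct sum, $\nabla_0(a_1,\dots,a_m)=0$ if and only if $[\,\overline{\partial},a_j]=0$ for every $j$, that is, if and only if $a_j\in\mathcal{O}(\mathcal{A})$ for every $j$ by Definition \ref{algebra of holomorphic elements}. Hence $H^0(\mathcal{E}_0,\nabla_0)=\mathcal{O}(\mathcal{A})^m$ as a subset of $\mathcal{A}^m$.

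Next I would check that this identification respects the module structures. It is already recorded (in the remark following the definition of the category $\mathcal{H}o\ell(\mathcal{A})$) that $H^0(\mathcal{E}_0,\nabla_0)$ is a left $\mathcal{O}(\mathcal{A})$-module via the restriction of the $\mathcal{A}$-action on $\mathcal{E}_0=\mathcal{A}^m$; this action is visibly componentwise multiplication, and since $\mathcal{O}(\mathcal{A})$ is a $\mathbb{C}$-subalgebra of $\mathcal{A}$ it preserves $\mathcal{O}(\mathcal{A})^m$. Thus the module structure on $H^0(\mathcal{E}_0,\nabla_0)$ is precisely the obvious one on $\mathcal{O}(\mathcal{A})^m$.

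For freeness of rank $m$, I would use the standard free $\mathcal{A}$-basis $\{e_1,\dots,e_m\}$ of $\mathcal{A}^m$. It was computed in the proof of Proposition \ref{holomorphic structure on free module} that $\nabla_0(e_j)=0$, so each $e_j$ is a holomorphic section, i.e.\ $e_j\in H^0(\mathcal{E}_0,\nabla_0)$. Every holomorphic section $(a_1,\dots,a_m)$ with $a_j\in\mathcal{O}(\mathcal{A})$ is then uniquely $\sum_{j=1}^m a_je_j$, an $\mathcal{O}(\mathcal{A})$-linear combination of the $e_j$; and $\mathcal{O}(\mathcal{A})$-linear independence of $\{e_1,\dots,e_m\}$ is inherited from their $\mathcal{A}$-linear independence since $\mathcal{O}(\mathcal{A})\subseteq\mathcal{A}$. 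Therefore $\{e_1,\dots,e_m\}$ is a free $\mathcal{O}(\mathcal{A})$-module basis of $H^0(\mathcal{E}_0,\nabla_0)$, which proves the claim.

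As for the main obstacle: there is essentially none of substance here, and the argument is short. The only point requiring care is that the connection used in $H^0$ is the \emph{canonical} $\overline{\partial}$-connection $\nabla_0$, so that $\ker\nabla_0$ genuinely decomposes componentwise; for a different (non-canonical) flat $\overline{\partial}$-connection on $\mathcal{A}^m$ the space of holomorphic sections need not be free, so the statement is about the holomorphic vector bundle $(\mathcal{E}_0,\nabla_0)$ and not about the module $\mathcal{E}_0$ alone.
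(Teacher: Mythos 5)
Your proposal is correct and follows the same route as the paper: identify $H^0(\mathcal{E}_0,\nabla_0)$ with $\ker\nabla_0$, observe that the canonical connection acts componentwise so the kernel is exactly $\mathcal{O}(\mathcal{A})^m\subseteq\mathcal{A}^m$, and conclude freeness of rank $m$. The extra verifications you add (compatibility of module structures and explicit exhibition of $\{e_1,\dots,e_m\}$ as an $\mathcal{O}(\mathcal{A})$-basis) are details the paper leaves implicit, but the argument is the same.
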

\begin{proof}
Let $\mathcal{E}_0=\mathcal{A}^m$ be a free module over $\mathcal{A}$ of rank $m$. Then $(\mathcal{E}_0\,,\nabla_0)$ is a holomorphic vector bundle over
$\mathcal{A}$ by Propn. (\ref{holomorphic structure on free module}). Now, the space of holomorphic sections becomes
\begin{eqnarray*}
H^0(\mathcal{E}_0\,,\nabla_0) & = & Ker\left\{\nabla_0:\mathcal{A}^m\longrightarrow\left(\Omega^{0,1}_{\partial,\overline{\partial}}
(\mathcal{A})\right)^m\right\}\\
& = & \{(a_1,\ldots,a_m):[\,\overline{\partial},a_j]=0\,\,\forall\,j=1,\ldots,m\}\\
& = & \{(a_1,\ldots,a_m):\,a_j\in\mathcal{O}(\mathcal{A})\,\forall\,j=1,\ldots,m\}\\
& \cong & \mathcal{O}(\mathcal{A})^m
\end{eqnarray*}
i,e. $H^0(\mathcal{E}_0\,,\nabla_0)$ is a free $\mathcal{O}(\mathcal{A})$-module of rank$=rank(\mathcal{E}_0)$.
\end{proof}
\medskip

\subsection{Space of differential forms on noncommutative \texorpdfstring{$2n$}{Lg}-tori}\texorpdfstring{$\newline$}{Lg}
Now, we concentrate on the particular case of noncommutative even dimensional torus. We will work with the holomorphic structure obtained in Th.
(\ref{N=(2,2) from dynamical system}). Since the algebra is concretely prescribed, one can explicitly compute the associated bimodule of noncommutative
space of $N=(1,1)$ differential forms (recall from Section $[2.2.2]$ in \cite{FGR2}) and complex differential forms (recall from Section $[2.3.2]$
in \cite{FGR2}).

\begin{definition}[\cite{Rfl1}]
Let $\mathscr{A}$ be the universal $C^*$-algebra generated by $2n$ many unitaries $\,U_1,\ldots,U_{2n}$ satisfying $\,U_jU_\ell=exp(2\pi i
\Theta_{\ell j})U_\ell U_j$, where $\Theta$ is a real $2n\times 2n$ skew-symmetric matrix such that  the lattice $\wedge_\Theta$ generated by its
columns makes $\wedge_\Theta+\mathbb{Z}^{2n}$ dense in $\mathbb{R}^{2n}$. The compact connected Lie group $\mathbb{T}^{2n}$ acts on $\mathscr{A}$ by
$\alpha_{\bf z}(U_\ell)=z_\ell U_\ell,\,\ell=1,\ldots,2n$. Let $\mathcal{A}_\Theta$ denotes the smooth subalgebra of $\mathscr{A}$ under this action.
Via Fourier transform one obtains
\begin{center}
$\mathcal{A}_\Theta:=\left\{\sum\alpha_{(j_1,\ldots,j_{2n})}U_1^{j_1}\ldots U_{2n}^{j_{2n}}\,:\,\alpha_{(j_1,\ldots,j_{2n})}\in\mathbb{S}
(\mathbb{Z}^{2n})\right\}\,.$
\end{center}
Then, $\mathcal{A}_\Theta$ is a unital spectrally invariant subalgebra of $\mathscr{A}$, called the noncommutative $2n$-torus.
\end{definition}

\begin{proposition}\label{N=(1,1) differential form for torus}
For the noncommutative $2n$-torus $\mathcal{A}_\Theta$, as an $\mathcal{A}_\Theta$-bimodule, we have
\begin{enumerate}
\item[(1)] $\Omega^0_d(\mathcal{A}_\Theta)\cong\mathcal{A}_\Theta$,
\item[(2)] $\Omega^\ell_d(\mathcal{A}_\Theta):=span\{a\prod_{j=1}^\ell[d,b_j]:a,b_j\in\mathcal{A}_\Theta\}\cong\mathcal{A}_\Theta^{\frac{2n!}
{\ell!(2n-\ell)!}}\,\,\,\forall\,1\leq\ell\leq 2n$,
\item[(3)] $\Omega^\ell_d(\mathcal{A}_\Theta)\cong\{0\}\,\,\forall\,\ell>2n$.
\end{enumerate}
\end{proposition}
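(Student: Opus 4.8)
The plan is to realise the bimodule $\Omega^\bullet_d(\mathcal{A}_\Theta)$ explicitly as $\mathcal{A}_\Theta$ tensored with an exterior algebra sitting faithfully inside $M_{N^2}(\mathbb{C})$, where $N=2^n$. For the torus dynamical system $(\mathcal{A}_\Theta,\mathbb{T}^{2n},\alpha)$ with the standard Lie algebra basis one has $\partial_j(U_\ell)=c\,\delta_{j\ell}U_\ell$ for a fixed nonzero scalar $c$, and $[\partial_j,\partial_k]=0$ since $\mathfrak{g}$ is abelian. By Proposition (\ref{self-adjoint operator}) and Theorem (\ref{N=(2,2) from dynamical system}) (compare Corollary (\ref{the case of noncommutative torus})) the nilpotent differential acts on $L^2(\mathcal{A}_\Theta,\tau)\otimes\mathbb{C}^{N^2}$ as $d=\sum_{j=1}^{2n}\partial_j\otimes\Gamma_j$, where
$$\Gamma_j:=\tfrac12\bigl(1_N\otimes\gamma_j+i\varepsilon^\prime\,\gamma_j\otimes\sigma\bigr)\ \in\ M_N(\mathbb{C})\otimes M_N(\mathbb{C})=M_{N^2}(\mathbb{C}),$$
so that $[d,a]=\sum_{j=1}^{2n}\partial_j(a)\otimes\Gamma_j$ for $a\in\mathcal{A}_\Theta$ (with $\partial_j(a)$ acting on $L^2$ by left multiplication). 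Using $\gamma_j^*=-\gamma_j$, $\{\gamma_j,\gamma_k\}=-2\delta_{jk}$, $\sigma^2=1$, $\{\sigma,\gamma_j\}=0$ and $(i\varepsilon^\prime)^2=-1$ (see Proposition (\ref{Clifford representation})), a short computation gives $\Gamma_j\Gamma_k+\Gamma_k\Gamma_j=0$ for all $j,k$; in particular $\Gamma_j^2=0$. Hence the unital subalgebra of $M_{N^2}(\mathbb{C})$ generated by $\Gamma_1,\dots,\Gamma_{2n}$ is a homomorphic image of the exterior algebra $\Lambda^\bullet(\mathbb{C}^{2n})$.

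Next I would identify $\Omega^\ell_d(\mathcal{A}_\Theta)$. Expanding a generator, $a[d,b_1]\cdots[d,b_\ell]=\sum a\,\partial_{j_1}(b_1)\cdots\partial_{j_\ell}(b_\ell)\otimes\Gamma_{j_1}\cdots\Gamma_{j_\ell}$, and using that the $\Gamma$'s anticommute and square to zero, each summand is $\pm\,c^\prime\otimes\Gamma_{j_1}\cdots\Gamma_{j_\ell}$ with $j_1<\dots<j_\ell$ and $c^\prime\in\mathcal{A}_\Theta$. Thus $\Omega^\ell_d(\mathcal{A}_\Theta)\subseteq M_\ell:=\bigl\{\sum_{j_1<\dots<j_\ell}a_{\mathbf j}\otimes\Gamma_{j_1}\cdots\Gamma_{j_\ell}:a_{\mathbf j}\in\mathcal{A}_\Theta\bigr\}$, which is $\{0\}$ once $\ell>2n$. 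For the reverse inclusion when $1\le\ell\le 2n$: unitarity of $U_j$ yields $U_j^{-1}[d,U_j]=c\,(1_{N^2}\otimes\Gamma_j)$, and repeated use of the graded Leibniz rule $[d,xy]=[d,x]y+x[d,y]$ shows that each product $\bigl(U_{j_1}^{-1}[d,U_{j_1}]\bigr)\cdots\bigl(U_{j_\ell}^{-1}[d,U_{j_\ell}]\bigr)$ lies in $\Omega^\ell_d(\mathcal{A}_\Theta)$; multiplying on the left by an arbitrary $a\in\mathcal{A}_\Theta$ gives $a\otimes\Gamma_{j_1}\cdots\Gamma_{j_\ell}\in\Omega^\ell_d(\mathcal{A}_\Theta)$ for every $j_1<\dots<j_\ell$. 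Hence $\Omega^\ell_d(\mathcal{A}_\Theta)=M_\ell$, while $\Omega^0_d(\mathcal{A}_\Theta)=\mathcal{A}_\Theta$ by convention.

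It then remains to show that the $\binom{2n}{\ell}$ elements $\Gamma_{j_1}\cdots\Gamma_{j_\ell}$ with $j_1<\dots<j_\ell$ are linearly independent in $M_{N^2}(\mathbb{C})$. Granting this: since $\mathcal{A}_\Theta$ acts only on the $L^2$ factor and each $\Gamma_{\mathbf j}$ commutes with $\mathcal{A}_\Theta$, the map $(a_{\mathbf j})\mapsto\sum a_{\mathbf j}\otimes\Gamma_{j_1}\cdots\Gamma_{j_\ell}$ is an isomorphism $\mathcal{A}_\Theta^{\,\binom{2n}{\ell}}\cong\Omega^\ell_d(\mathcal{A}_\Theta)$ of $\mathcal{A}_\Theta$-bimodules (the right action being coefficientwise too), $\binom{2n}{\ell}=\tfrac{(2n)!}{\ell!(2n-\ell)!}$, and $\Omega^\ell_d(\mathcal{A}_\Theta)=\{0\}$ for $\ell>2n$, which is exactly (1)--(3). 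For the linear independence I would set $\mu_j:=1_N\otimes\gamma_j$ and $\nu_j:=\gamma_j\otimes\sigma$; then $\{\mu_j,\mu_k\}=\{\nu_j,\nu_k\}=-2\delta_{jk}$ and $\{\mu_j,\nu_k\}=0$, so the $4n$ elements $\mu_1,\dots,\mu_{2n},\nu_1,\dots,\nu_{2n}$ generate a copy of the complexified Clifford algebra $\mathbb{C}l(4n)\cong M_{2^{2n}}(\mathbb{C})=M_{N^2}(\mathbb{C})$, which by simplicity is all of $M_{N^2}(\mathbb{C})$; hence the $2^{4n}$ ordered monomials in the $\mu$'s and $\nu$'s form a basis of $M_{N^2}(\mathbb{C})$. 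Since $\Gamma_j=\tfrac12(\mu_j+i\varepsilon^\prime\nu_j)$ and $\Gamma_j^-:=\tfrac12(\mu_j-i\varepsilon^\prime\nu_j)$ amount to a linear change of generators (with $\Gamma_j^*=-\Gamma_j^-$, $\{\Gamma_j^-,\Gamma_k^-\}=0$ and $\{\Gamma_j,\Gamma_k^-\}=-\delta_{jk}$), a standard normal-ordering argument shows that the $2^{4n}$ monomials $\Gamma_{j_1}\cdots\Gamma_{j_p}\,\Gamma_{k_1}^-\cdots\Gamma_{k_q}^-$ (both index blocks strictly increasing) again span $M_{N^2}(\mathbb{C})$ and hence form a basis; the sub-family with $q=0$ is then linearly independent, as required.

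The crux is this last step: parts (1)--(3) are otherwise bookkeeping with the Leibniz rule and the explicit shape of $d$, but freeness of the stated rank says precisely that the exterior-algebra relations are the \emph{only} relations among the $\Gamma_j$, and the leverage for that is the auxiliary Clifford presentation via the $\mu_j,\nu_j$, whose $4n$ anticommuting generators already span an algebra of dimension exactly $\dim M_{N^2}(\mathbb{C})=2^{4n}$. One should also note that the sign $\varepsilon^\prime=\pm1$ plays no role anywhere above (it is a unit), so the description is insensitive to the choice of real structure $J_\pm$, and that $\Omega^\bullet_d(\mathcal{A}_\Theta)=\bigoplus_{\ell=0}^{2n}\Omega^\ell_d(\mathcal{A}_\Theta)$ is thereby the expected noncommutative de Rham complex $\mathcal{A}_\Theta\otimes\Lambda^\bullet(\mathbb{C}^{2n})$.
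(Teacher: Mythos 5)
Your proof is correct, and its skeleton coincides with the paper's: both write $[d,a]=\sum_j\partial_j(a)\otimes\Gamma_j$ with $\Gamma_j=\tfrac12\bigl(1\otimes\gamma_j+i\varepsilon'\gamma_j\otimes\sigma\bigr)$, observe that the $\Gamma_j$ anticommute and square to zero, and then obtain the isomorphism with $\mathcal{A}_\Theta^{\binom{2n}{\ell}}$ by combining linear independence of the ordered monomials $\Gamma_{j_1}\cdots\Gamma_{j_\ell}$ with the surjectivity supplied by elements of the form $aU_j^{*}[d,U_j]$. Where you genuinely diverge is in the one step that carries all the weight, namely the linear independence of the degree-$\ell$ monomials. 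The paper proves this only for $\ell=1$, by conjugating the relation $\sum_j\alpha_j\Gamma_j=0$ with $1\otimes\sigma$ and subtracting to isolate $\sum_j\alpha_j\,1\otimes\gamma_j=0$; for $\ell\geq 2$ it asserts that ``the same argument'' applies and proceeds by induction, which is not immediate since conjugation by $1\otimes\sigma$ no longer acts by a simple sign on products of two or more $\Gamma$'s. Your route --- noting that $\mu_j=1\otimes\gamma_j$ and $\nu_j=\gamma_j\otimes\sigma$ generate a unital copy of the simple algebra $\mathbb{C}l(4n)$, which by the dimension count $2^{4n}=\dim M_{N^2}(\mathbb{C})$ must be all of $M_{N^2}(\mathbb{C})$, and then passing to the CAR generators $\Gamma_j,\Gamma_j^-$ with $\{\Gamma_j,\Gamma_k^-\}=-\delta_{jk}$ so that the normal-ordered monomials form a basis --- treats all degrees uniformly and in fact supplies the justification the paper leaves implicit. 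The price is the auxiliary Clifford/CAR machinery; what it buys is a clean proof that the exterior-algebra relations are the \emph{only} relations among the $\Gamma_j$, which is exactly what parts $(2)$ and $(3)$ require. The supporting details check out: $\{\Gamma_j,\Gamma_k\}=0$ follows from $(i\varepsilon')^2=-1$ together with $\{\mu_j,\nu_k\}=0$; the identity $U_j^{-1}[d,U_j]=c\,(1\otimes\Gamma_j)$ with $c\neq 0$ gives surjectivity exactly as the paper's $aU_j^{*}[d,U_j]$ does; and closure of $\Omega^\ell_d(\mathcal{A}_\Theta)$ under right multiplication via the Leibniz rule legitimizes the reverse inclusion and the bimodule identification.
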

\begin{proof}
Part $(1)$ is obvious. Recall that $d=\frac{1}{2}(\mathfrak{D}-i\overline{\mathfrak{D}})$, where $\mathfrak{D}=1\otimes D$ and $\overline{\mathfrak{D}}
=-\varepsilon^\prime D\otimes\sigma$ acting on $\mathcal{H}=\overline{\mathcal{E}\otimes_\mathcal{A}\mathcal{E}}\cong L^2(\mathcal{A},\tau)^{N^2}$
(see Propn. [\ref{an isomorphism of Hilbert spaces}] and [\ref{self-adjoint operator}]). Hence, for
\begin{center}
$d=\frac{1}{2}\sum_{j=1}^{2n}(\partial_j\otimes 1\otimes\gamma_j+i\varepsilon^\prime\partial_j\otimes\gamma_j\otimes\sigma)$
\end{center}
we see that
\begin{eqnarray*}
[d,a] & = & \frac{1}{2}\sum_{j=1}^{2n}(\partial_j(a)\otimes 1\otimes\gamma_j+i\varepsilon^\prime\partial_j(a)\otimes\gamma_j\otimes\sigma)\\
& = & \sum_{j=1}^{2n}\partial_j(a)\otimes\left(1\otimes \frac{1}{2}\gamma_j+\frac{i\varepsilon^\prime}{2}\gamma_j\otimes\sigma\right)\,.
\end{eqnarray*}
We claim that the set $\{1\otimes \frac{1}{2}\gamma_j+\frac{i\varepsilon^\prime}{2}\gamma_j\otimes\sigma:j=1,\ldots,2n\}$ is a linearly independent
subset of $M_N(\mathbb{C})\otimes M_N(\mathbb{C})$. Consider
\begin{eqnarray}\label{equation 1}
\sum_{j=1}^{2n}\alpha_j(1\otimes \frac{1}{2}\gamma_j+\frac{i\varepsilon^\prime}{2}\gamma_j\otimes\sigma)=0
\end{eqnarray}
with $\alpha_j\in\mathbb{C}$ for all $j$. Multiplying this equation (\ref{equation 1}) by $1\otimes\sigma$ from the right, and then $1\otimes
\sigma$ from the left we get
\begin{eqnarray}\label{equation 2}
\sum_{j=1}^{2n}\alpha_j(-1\otimes \frac{1}{2}\gamma_j+\frac{i\varepsilon^\prime}{2}\gamma_j\otimes\sigma)=0\,.
\end{eqnarray}
Now, $(\ref{equation 1})-(\ref{equation 2})$ gives us
\begin{center}
$\sum_{j=1}^{2n}1\otimes\alpha_j\gamma_j=0$
\end{center}
in $M_N(\mathbb{C})\otimes M_N(\mathbb{C})$. Since, $\{\gamma_1,\ldots,\gamma_{2n}\}$ is a linearly independent subset of $M_N(\mathbb{C})$ we get
$\alpha_j=0$ for all $j$ proving our claim. Hence, the following map
\begin{align*}
\Phi\,:\,\Omega^1_d(\mathcal{A}_\varTheta) &\longrightarrow \mathcal{A}_\Theta^{2n}\\
a[d,b] &\longmapsto (a\partial_1(b),\ldots,a\partial_{2n}(b))
\end{align*}
is an injective $\mathcal{A}_\Theta$-bimodule map. Now, for any $(0,\ldots,a,\ldots,0)\in\mathcal{A}_\Theta^{2n}$ with $a$ in the $j$-th place, the
element $aU_j^*\delta U_j\in\Omega^1(\mathcal{A}_\Theta),\,\delta$ being the universal differential, descends to $aU_j^*[d,U_j]\in\Omega^1_d
(\mathcal{A}_\Theta)$ and $\Phi(aU_j^*[d,U_j])=(0,\ldots,a,\ldots,0)$, proving surjectivity of $\Phi$. This concludes part $(2)$ for $\ell=1$. For
arbitrary $1\leq\ell\leq 2n$, first observe that
\begin{center}
$\left(1\otimes \frac{1}{2}\gamma_j+\frac{i\varepsilon^\prime}{2}\gamma_j\otimes\sigma\right)^2=0\quad,\quad\left\{(1\otimes \frac{1}{2}\gamma_j+
\frac{i\varepsilon^\prime}{2}\gamma_j\otimes\sigma)\,,\,(1\otimes \frac{1}{2}\gamma_r+\frac{i\varepsilon^\prime}{2}\gamma_r\otimes\sigma)\right\}=0$
\end{center}
for any $1\leq j\neq r\leq 2n$. Hence, for $a,b\in\mathcal{A}_\Theta$,
\begin{eqnarray*}
&  & [d,a][d,b]\\
& = & \sum_{1\leq j<r\leq 2n}^{2n}(\partial_j(a)\partial_r(b)-\partial_r(a)\partial_j(b))\otimes\left(1\otimes\frac{1}{2}\gamma_j+
\frac{i\varepsilon^\prime}{2}\gamma_j\otimes\sigma\right)\left(1\otimes\frac{1}{2}\gamma_r+\frac{i\varepsilon^\prime}{2}\gamma_r\otimes\sigma\right)\,.
\end{eqnarray*}
Same argument as in the case of $\ell=1$ will now show that $\Omega^2_d(\mathcal{A}_\Theta)\cong\mathcal{A}_\Theta^{\frac{2n!}{2(2n-2)!}}$.
By induction on $1\leq\ell\leq 2n$ one concludes Part $(2)$ and Part $(3)$ simultaneously.
\end{proof}

\begin{lemma}\label{complex differential form for torus}
For the noncommutative $2n$-torus $\mathcal{A}_\Theta$ with $n>1$, as an $\mathcal{A}_\Theta$-bimodule, we have
\begin{enumerate}
\item[(1)] $\Omega^{0,0}_{\partial,\overline{\partial}}(\mathcal{A}_\Theta)\cong\mathcal{A}_\Theta$,
\item[(2)] $\Omega^{1,0}_{\partial,\overline{\partial}}(\mathcal{A}_\Theta)\cong\mathcal{A}_\Theta^{n}$,
\item[(3)] $\Omega^{0,1}_{\partial,\overline{\partial}}(\mathcal{A}_\Theta)\cong\mathcal{A}_\Theta^{n}$,
\item[(4)] $\Omega^{2,0}_{\partial,\overline{\partial}}(\mathcal{A}_\Theta)\cong\mathcal{A}_\Theta^{\frac{n(n-1)}{2}}$,
\item[(5)] $\Omega^{0,2}_{\partial,\overline{\partial}}(\mathcal{A}_\Theta)\cong\mathcal{A}_\Theta^{\frac{n(n-1)}{2}}$,
\item[(6)] The product map $\Omega^{0,1}_{\partial,\overline{\partial}}(\mathcal{A}_\Theta)\times\Omega^{0,1}_{\partial,\overline{\partial}}
(\mathcal{A}_\Theta)\longrightarrow\Omega^{0,2}_{\partial,\overline{\partial}}(\mathcal{A}_\Theta)$ is given by
\begin{center}
$(a_1,\ldots,a_n).(b_1,\ldots,b_n)\longmapsto((a_pb_q-a_qb_p)_{1\leq p<q\leq n})\,.$
\end{center}
\end{enumerate}
\end{lemma}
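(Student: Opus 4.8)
The plan is to make the operators $\partial$ and $\overline{\partial}$ of Thm. (\ref{N=(2,2) from dynamical system}) completely explicit on $\mathcal{A}_\Theta$, and then run a bigraded refinement of the linear-algebra argument already used in the proof of Propn. (\ref{N=(1,1) differential form for torus}). For odd $j$ let $\overline{\partial}_j:=\tfrac12(\partial_j-i\partial_{j+1})$ and $\partial'_j:=\tfrac12(\partial_j+i\partial_{j+1})$ be the Dolbeault-type derivations attached to the complex coordinate pairing the real coordinates $j$ and $j+1$. A direct computation from $d=\tfrac12(\mathfrak{D}-i\overline{\mathfrak{D}})$ and the expression for $d_2$ in Thm. (\ref{N=(2,2) from dynamical system}) gives
\begin{center}
$[\,\overline{\partial},a]=\sum_{j\,odd}\overline{\partial}_j(a)\otimes\overline{P}_j\,,\qquad[\partial,a]=\sum_{j\,odd}\partial'_j(a)\otimes P_j\,,$
\end{center}
where, with $Z_j:=\gamma_j+i\gamma_{j+1}$ and $W_j:=\gamma_j-i\gamma_{j+1}$, the coefficient matrices are
\begin{center}
$\overline{P}_j:=\tfrac12\left(1\otimes Z_j+i\varepsilon^\prime\,Z_j\otimes\sigma\right)\,,\qquad P_j:=\tfrac12\left(1\otimes W_j+i\varepsilon^\prime\,W_j\otimes\sigma\right)\,,$
\end{center}
all lying in $M_N(\mathbb{C})\otimes M_N(\mathbb{C})$ and commuting with $\mathcal{A}_\Theta$. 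Part $(1)$ is immediate.

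For $(2)$ and $(3)$ the idea is to show that $x[\partial,y]\mapsto$ (tuple of coefficients) and $x[\,\overline{\partial},y]\mapsto$ (tuple of coefficients) are well-defined injective $\mathcal{A}_\Theta$-bimodule maps into $\mathcal{A}_\Theta^n$, and that they are surjective. Injectivity reduces, via faithfulness of the left regular representation, to linear independence of $\{\overline{P}_j:j\,odd\}$ in $M_N(\mathbb{C})\otimes M_N(\mathbb{C})$: from $\sum\alpha_j\overline{P}_j=0$, conjugating by $1\otimes\sigma$ and adding/subtracting isolates $\sum\alpha_j(1\otimes Z_j)=0$, exactly as in the proof of Propn. (\ref{N=(1,1) differential form for torus}); hence $\sum\alpha_j(\gamma_j+i\gamma_{j+1})=0$, and linear independence of $\{\gamma_1,\ldots,\gamma_{2n}\}$ gives $\alpha_j=0$ for all $j$ (the family $\{P_j\}$ is handled identically). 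Since $\partial_\ell(U_m)=c\,\delta_{\ell m}U_m$ with $c\neq0$, we get $[\,\overline{\partial},U_j]=c_j\,U_j\otimes\overline{P}_j$ with $c_j\neq0$ for odd $j$, so $aU_j^*[\,\overline{\partial},U_j]=c_j\,a\otimes\overline{P}_j$ realizes the $j$-th basis tuple scaled by an arbitrary $a\in\mathcal{A}_\Theta$; this gives surjectivity, and likewise for $\partial$. Because the $\overline{P}_j$ commute with $\mathcal{A}_\Theta$, the induced bimodule structure on $\mathcal{A}_\Theta^n$ is the coordinatewise one.

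Parts $(4)$--$(6)$ rest on the multiplicative structure of the coefficient matrices. Using $\gamma_\ell^2=-1$, anticommutativity of distinct $\gamma$'s, and $\sigma\gamma_\ell=-\gamma_\ell\sigma$, one checks $Z_j^2=0$ and $\{Z_j,Z_k\}=0$ for odd $j\neq k$, from which a short computation yields $\overline{P}_j^{\,2}=0$ and $\{\overline{P}_j,\overline{P}_k\}=0$; thus the $\overline{P}_j$ generate a quotient of an exterior algebra on $n$ generators and
\begin{center}
$[\,\overline{\partial},b][\,\overline{\partial},c]=\sum_{j<k\,odd}\left(\overline{\partial}_j(b)\overline{\partial}_k(c)-\overline{\partial}_k(b)\overline{\partial}_j(c)\right)\otimes\overline{P}_j\overline{P}_k\,,$
\end{center}
with $\tfrac{n(n-1)}{2}$ coefficient matrices $\overline{P}_j\overline{P}_k$ ($j<k$ odd). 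Their linear independence follows by applying $\operatorname{tr}\otimes\operatorname{id}$ to the first tensor leg: only the summand $1\otimes Z_jZ_k$ of $\overline{P}_j\overline{P}_k$ survives, and the $Z_jZ_k$ are linearly independent in $M_N(\mathbb{C})$ because each is a combination of the four distinct Clifford monomials $\gamma_a\gamma_b$ with $a\in\{j,j+1\}$, $b\in\{k,k+1\}$, and these four-element index sets are pairwise disjoint for distinct ordered odd pairs $(j,k)$. Surjectivity onto $\mathcal{A}_\Theta^{n(n-1)/2}$ again uses that $\Omega^{0,2}_{\partial,\overline{\partial}}(\mathcal{A}_\Theta)$ is a left module together with the fact that $U_j^*[\,\overline{\partial},U_j]\,U_k^*[\,\overline{\partial},U_k]\in\Omega^{0,2}_{\partial,\overline{\partial}}(\mathcal{A}_\Theta)$ equals a nonzero scalar multiple of $1\otimes\overline{P}_j\overline{P}_k$; reading off the product in these coordinates, and relabelling the odd indices $1,3,\ldots,2n-1$ as $1,\ldots,n$, yields the formula in $(6)$. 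The computation for $\Omega^{2,0}_{\partial,\overline{\partial}}(\mathcal{A}_\Theta)$ is identical with $W_j$ in place of $Z_j$.

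The conceptual content is routine once the explicit formulas are in place; the real work is the bookkeeping of the three tensor legs throughout, the partial-trace step isolating $1\otimes Z_jZ_k$, and the check that the operator realisation imposes \emph{no} relations beyond the matrix identities $\overline{P}_j^{\,2}=0$ and $\{\overline{P}_j,\overline{P}_k\}=0$ --- this last point being exactly where faithfulness of $\mathcal{A}_\Theta$ on $\ell^2(\mathbb{Z}^{2n})$ and the linear independence of Clifford monomials enter. The hypothesis $n>1$ only serves to make parts $(4)$--$(6)$ non-vacuous.
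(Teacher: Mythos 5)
Your proposal is correct and follows essentially the same route as the paper: isolate the Clifford coefficient matrices in $[\partial,a]$ and $[\overline{\partial},a]$, prove their linear independence by the $1\otimes\sigma$ conjugation trick, get surjectivity from $aU_j^*[\overline{\partial},U_j]$, and derive $(4)$--$(6)$ from the nilpotency and anticommutativity of the degree-one coefficient matrices. The only real difference is that you supply an explicit argument (partial trace on the first matrix leg plus disjointness of the supporting Clifford monomials) for the linear independence of the products $\overline{P}_j\overline{P}_k$, a point the paper merely asserts ``can be easily seen.''
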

\begin{proof}
Part $(1)$ is obvious. For part $(2)$, from Thm. (\ref{N=(2,2) from dynamical system}) we get that
\begin{eqnarray*}
[\partial,a] & = & \frac{1}{2}[d-id_2,a]\\
& = & \frac{1}{4}\left(\sum_{j=1}^{2n}\partial_j(a)\otimes 1\otimes\gamma_j-\sum_{\ell=1,\,\ell\,odd}^{2n}i\partial_\ell(a)\otimes 1\otimes
\gamma_{\ell+1}+\sum_{\ell=1,\,\ell\,odd}^{2n}i\partial_{\ell+1}(a)\otimes 1\otimes\gamma_\ell\right)\\
&  & +\frac{i\varepsilon^\prime}{4}\left(\sum_{j=1}^{2n}\partial_j(a)\otimes\gamma_j\otimes\sigma-\sum_{\ell=1,\,\ell\,odd}^{2n}i\partial_\ell(a)
\otimes\gamma_{\ell+1}\otimes\sigma+\sum_{\ell=1,\,\ell\,odd}^{2n}i\partial_{\ell+1}(a)\otimes\gamma_\ell\otimes\sigma\right)\\
& = & \frac{1}{4}\left(\sum_{\ell=1,\,\ell\,odd}^{2n}(\partial_{\ell+1}-i\partial_\ell)(a)\otimes 1\otimes\gamma_{\ell+1}+(\partial_\ell+i
\partial_{\ell+1})(a)\otimes 1\otimes\gamma_\ell\right)\\
&  & +\frac{i\varepsilon^\prime}{4}\left(\sum_{\ell=1,\,\ell\,odd}^{2n}(\partial_{\ell+1}-i\partial_\ell)(a)\otimes\gamma_{\ell+1}\otimes\sigma+
(\partial_\ell+i\partial_{\ell+1})(a)\otimes\gamma_\ell\otimes\sigma\right)\\
& = & \sum_{\ell=1,\,\ell\,odd}^{2n}\frac{1}{4}(\partial_{\ell+1}-i\partial_\ell)(a)\otimes 1\otimes(\gamma_{\ell+1}+i\gamma_\ell)+\frac{i
\varepsilon^\prime}{4}(\partial_{\ell+1}-i\partial_\ell)(a)\otimes(\gamma_{\ell+1}+i\gamma_\ell)\otimes\sigma
\end{eqnarray*}
for all $a\in\mathcal{A}_\Theta$. It can be verified (same way as in Propn. [\ref{N=(1,1) differential form for torus}]) that the set $\{1\otimes
\frac{1}{4}(\gamma_{\ell+1}+i\gamma_\ell)+\frac{i\varepsilon^\prime}{4}(\gamma_{\ell+1}+i\gamma_\ell)\otimes\sigma:\ell=1,\ldots,2n,\,\ell\,is\,odd\}$
is a linearly independent subset of $M_N(\mathbb{C})\otimes M_N(\mathbb{C})$. Hence, the following map
\begin{align*}
\Phi:\Omega^{1,0}_{\partial,\overline{\partial}}(\mathcal{A}_\Theta) &\longrightarrow \mathcal{A}_\Theta^{n}\\
a[\partial,b\,] &\longmapsto \sum_{\ell=1,\,\ell\,odd}^{2n}\left(0,\ldots,\,\underbrace{a\partial_{\ell+1}(b)-ia\partial_\ell(b)}_{\frac{\ell+1}
{2}-th\,\,place}\,,\ldots,0\right)
\end{align*}
is an injective $\mathcal{A}_\Theta$-bimodule map. For arbitrary $\xi=(0,\ldots,a,\ldots,0)\in\mathcal{A}_\Theta^{n}$ with $a$ in the $(\ell+1)/2$-th
place, $\Phi(aU_{\ell+1}^*[\partial,U_{\ell+1}])=\xi$. This shows that $\Phi$ is surjective, concluding Part $(2)$. Part $(3)$ follows similarly
since,
\begin{eqnarray*}
[\,\overline{\partial},a] & = & \frac{1}{2}[d+id_2,a]\\
& = & \frac{1}{4}\left(\sum_{j=1}^{2n}\partial_j(a)\otimes 1\otimes\gamma_j+\sum_{\ell=1,\,\ell\,odd}^{2n}i\partial_\ell(a)\otimes 1\otimes
\gamma_{\ell+1}-\sum_{\ell=1,\,\ell\,odd}^{2n}i\partial_{\ell+1}(a)\otimes 1\otimes\gamma_\ell\right)\\
&  & +\frac{i\varepsilon^\prime}{4}\left(\sum_{j=1}^{2n}\partial_j(a)\otimes\gamma_j\otimes\sigma+\sum_{\ell=1,\,\ell\,odd}^{2n}i\partial_\ell(a)
\otimes\gamma_{\ell+1}\otimes\sigma-\sum_{\ell=1,\,\ell\,odd}^{2n}i\partial_{\ell+1}(a)\otimes\gamma_\ell\otimes\sigma\right)\\
& = & \sum_{\ell=1,\,\ell\,odd}^{2n}\frac{1}{4}(\partial_{\ell+1}+i\partial_\ell)(a)\otimes 1\otimes(\gamma_{\ell+1}-i\gamma_\ell)+\frac{i
\varepsilon^\prime}{4}(\partial_{\ell+1}+i\partial_\ell)(a)\otimes(\gamma_{\ell+1}-i\gamma_\ell)\otimes\sigma
\end{eqnarray*}
for all $a\in\mathcal{A}_\Theta$.\\
For Part $(5)$, denote $\delta_j:=\partial_{2j}+i\partial_{2j-1}$ and $\eta_j:=\gamma_{2j}-i\gamma_{2j-1}$ for $j=1,\ldots,n$. Observe that
\begin{center}
$\eta_j^2=0\quad,\quad\{\eta_p,\eta_q\}=0\,\,\forall\,p\neq q\,.$
\end{center}
So by part $(3)$ we see that
\begin{center}
$[\,\overline{\partial},a]=\sum_{j=1}^n\frac{1}{4}\delta_j(a)\otimes 1\otimes\eta_j+\frac{i\varepsilon^\prime}{4}\delta_j(a)\otimes\eta_j\otimes\sigma\,.$
\end{center}
Hence, for arbitrary $a,b\in\mathcal{A}_\Theta$,
\begin{eqnarray*}
&  & [\,\overline{\partial},a][\,\overline{\partial},b]\\
& = & \frac{1}{16}\sum_{\ell<r}(\delta_r(a)\delta_\ell(b)-\delta_\ell(a)\delta_r(b))\otimes
(1\otimes\eta_\ell\eta_r-\eta_\ell\eta_r\otimes 1)\\
&  & \,\,+\frac{i\varepsilon^\prime}{16}\sum_{\ell\neq r}(\delta_r(a)\delta_\ell(b))\otimes(\eta_\ell\otimes\eta_r-\eta_r\otimes\eta_\ell)(1\otimes\sigma)\\
& = & \sum_{\ell<r}(\delta_r(a)\delta_\ell(b)-\delta_\ell(a)\delta_r(b))\otimes\left(\frac{1}{16}(1\otimes\eta_\ell\eta_r-\eta_\ell\eta_r
\otimes 1)+\frac{i\varepsilon^\prime}{16}(\eta_\ell\otimes\eta_r\sigma-\eta_r\otimes\eta_\ell\sigma)\right)
\end{eqnarray*}
The set $\{\frac{1}{16}(1\otimes\eta_\ell\eta_r-\eta_\ell\eta_r\otimes 1)+\frac{i\varepsilon^\prime}{16}(\eta_\ell\otimes\eta_r\sigma-\eta_r\otimes
\eta_\ell\sigma):1\leq\ell<r\leq n\}$ can be easily seen to be a linearly independent subset of $M_N(\mathbb{C})\otimes M_N(\mathbb{C})$. Hence, the
following map
\begin{align*}
\Phi:\Omega^{0,2}_{\partial,\overline{\partial}}(\mathcal{A}_\Theta) &\longrightarrow \mathcal{A}_\Theta^{\frac{n(n-1)}{2}}\\
a[\,\overline\partial,b\,][\,\overline\partial,c\,] &\longmapsto \left((a\delta_r(b)\delta_\ell(c)-a\delta_\ell(b)\delta_r(c))_{1\leq\ell<r\leq n}\right)
\end{align*}
is an injective $\mathcal{A}_\Theta$-bimodule map. To see surjectivity, observe that for any $a\in\mathcal{A}_\Theta$ in $(\ell,r)$-position with $\ell<r$,
\begin{center}
$\Phi:aU_{2\ell}^*U_{2r}^*[\,\overline\partial,U_{2r}][\,\overline\partial,U_{2\ell}]\longmapsto a\,.$
\end{center}
This completes Part $(5)$, and Part $(4)$ follows similarly.\\
For Part $(6)$, starting with $(a_1,\ldots,a_n),\,(b_1,\ldots,b_n)\in\mathcal{A}_\Theta^n$ first obtain their respective inverse image in
$\Omega^{0,1}_{\partial,\overline{\partial}}(\mathcal{A}_\Theta)$ using Part $(3)$, then take the product to get an element in $\Omega^{0,2}_{\partial,
\overline{\partial}}(\mathcal{A}_\Theta)$ and finally use the isomorphism in Part $(5)$ to find its image in $\mathcal{A}_\Theta^{\frac{n(n-1)}{2}}$.
We left this for the reader to verify.
\end{proof}

\begin{corollary}\label{product rule of basis}
If $\{e_1,\ldots,e_n\}$ denotes the standard free module basis of $\Omega^{0,1}_{\partial,\overline{\partial}}(\mathcal{A}_\Theta)\cong
\mathcal{A}_\Theta^n$ then $\{e_\ell e_r:1\leq\ell<r\leq n\}$ is a free module basis of $\Omega^{0,2}_{\partial,\overline{\partial}}
(\mathcal{A}_\Theta)\cong\mathcal{A}_\Theta^{\frac{n(n-1)}{2}}$. Moreover, $e_\ell e_r+e_re_\ell=e_\ell^2=0$ for all $1\leq\ell<r\leq n$.
\end{corollary}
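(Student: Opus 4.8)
The plan is to deduce the statement directly from Lemma \ref{complex differential form for torus}, specifically from parts $(3)$, $(5)$ and $(6)$, so that essentially no new computation is needed. By Part $(3)$ there is an explicit $\mathcal{A}_\Theta$-bimodule isomorphism $\Phi:\Omega^{0,1}_{\partial,\overline{\partial}}(\mathcal{A}_\Theta)\longrightarrow\mathcal{A}_\Theta^n$, and by definition the ``standard free module basis'' $\{e_1,\dots,e_n\}$ is the family $\{\Phi^{-1}(\varepsilon_1),\dots,\Phi^{-1}(\varepsilon_n)\}$, where $\varepsilon_j$ denotes the $j$-th coordinate vector of $\mathcal{A}_\Theta^n$. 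First I would record, by tracing the surjectivity argument in the proof of Part $(3)$ (with $\delta_j=\partial_{2j}+i\partial_{2j-1}$ and using that $\partial_k(U_\ell)$ is a nonzero scalar multiple of $\delta_{k\ell}U_\ell$, so that $\delta_m(U_{2j})$ vanishes unless $m=j$), that $e_j$ is represented by a fixed nonzero scalar multiple of $U_{2j}^*[\,\overline{\partial},U_{2j}]$; the precise scalar is irrelevant for everything below.

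Next I would compute the products $e_\ell e_r$ for $\ell<r$ by means of the explicit product rule of Part $(6)$. Substituting $a=\varepsilon_\ell$ and $b=\varepsilon_r$ into the tuple $(a_pb_q-a_qb_p)_{1\le p<q\le n}$ produces the tuple whose $(p,q)$-entry equals $\delta_{p\ell}\delta_{qr}-\delta_{q\ell}\delta_{pr}$; since $\ell<r$ the second term never contributes when $p<q$, so this is precisely the $(\ell,r)$-th coordinate vector of $\mathcal{A}_\Theta^{n(n-1)/2}$. Because the identification $\Omega^{0,2}_{\partial,\overline{\partial}}(\mathcal{A}_\Theta)\cong\mathcal{A}_\Theta^{n(n-1)/2}$ of Part $(5)$ is an $\mathcal{A}_\Theta$-bimodule isomorphism under which $e_\ell e_r$ goes to the $(\ell,r)$-th coordinate vector, the family $\{e_\ell e_r:1\le\ell<r\le n\}$ is carried onto the standard free module basis of $\mathcal{A}_\Theta^{n(n-1)/2}$, and is therefore itself a free module basis of $\Omega^{0,2}_{\partial,\overline{\partial}}(\mathcal{A}_\Theta)$.

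For the remaining relations the same product rule of Part $(6)$ suffices: with $a=\varepsilon_\ell$, $b=\varepsilon_r$ and $\ell<r$, the tuple attached to $e_re_\ell$ is $(b_pa_q-b_qa_p)_{p<q}=-(a_pb_q-a_qb_p)_{p<q}$, which gives $e_\ell e_r+e_re_\ell=0$, while the tuple attached to $e_\ell^2$ is $(a_pa_q-a_qa_p)_{p<q}=0$, giving $e_\ell^2=0$. Equivalently, these identities simply reflect the anticommutation relations $\eta_j^2=0$ and $\{\eta_p,\eta_q\}=0$ of the Clifford elements $\eta_j=\gamma_{2j}-i\gamma_{2j-1}$ that were already noted in the proof of Part $(5)$. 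I do not anticipate any genuine obstacle here: the whole proof is bookkeeping with the isomorphisms of Lemma \ref{complex differential form for torus}, the only point needing mild care being to confirm that the abstract standard basis $e_j$ is indeed $\Phi^{-1}$ of a coordinate vector and that the scalar normalisations built into $\Phi$ do not affect the conclusion.
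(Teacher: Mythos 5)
Your proposal is correct and follows essentially the same route as the paper, whose proof is simply the one-line citation ``Follows from Part $(3)$, $(4)$ and $(5)$ in Lemma (\ref{complex differential form for torus})''; you have merely spelled out the bookkeeping (using Part $(6)$ and the $\eta_j$ anticommutation relations in place of the paper's reference to Part $(4)$, which appears to be a slip for the parts actually needed). No gap.
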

\begin{proof}
Follows from Part $(3),\,(4)$ and $(5)$ in the previous Lemma (\ref{complex differential form for torus}).
\end{proof}

\begin{proposition}\label{higher complex differential form for torus}
For the noncommutative $2n$-torus $\mathcal{A}_\Theta$, as an $\mathcal{A}_\Theta$-bimodule, we have
\begin{enumerate}
\item $\Omega^{\ell,0}_{\partial,\overline{\partial}}(\mathcal{A}_\Theta)\cong\mathcal{A}_\Theta^{\frac{n!}{\ell!(n-\ell)!}}\,\,\forall\,1\leq\ell\leq n\,$,
\item $\Omega^{0,\ell}_{\partial,\overline{\partial}}(\mathcal{A}_\Theta)\cong\mathcal{A}_\Theta^{\frac{n!}{\ell!(n-\ell)!}}\,\,\forall\,1\leq\ell\leq n\,$,
\item $\Omega^{\ell,0}_{\partial,\overline{\partial}}(\mathcal{A}_\Theta)=\Omega^{0,\ell}_{\partial,\overline{\partial}}(\mathcal{A}_\Theta)=\{0\}\,\,\forall\,\ell>n\,$,
\item $\Omega_d^r(\mathcal{A}_\Theta)\cong\bigoplus_{p+q=r}\Omega^{p,q}_{\partial,\overline{\partial}}(\mathcal{A}_\Theta)\,$.
\end{enumerate}
\end{proposition}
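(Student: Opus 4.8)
The plan is to promote the degree‑one and degree‑two computations of Lemma (\ref{complex differential form for torus}) to arbitrary degree by extracting the relevant constant matrices. Put $\delta_j:=\partial_{2j}+i\partial_{2j-1}$, $\eta_j:=\gamma_{2j}-i\gamma_{2j-1}$, $\delta_j^\prime:=\partial_{2j}-i\partial_{2j-1}$, $\eta_j^\prime:=\gamma_{2j}+i\gamma_{2j-1}$, and set $\mu_j:=\tfrac14(1\otimes\eta_j)+\tfrac{i\varepsilon^\prime}{4}(\eta_j\otimes\sigma)$ and $\nu_j:=\tfrac14(1\otimes\eta_j^\prime)+\tfrac{i\varepsilon^\prime}{4}(\eta_j^\prime\otimes\sigma)$ in $M_N(\mathbb{C})\otimes M_N(\mathbb{C})$, so that Lemma (\ref{complex differential form for torus}) already gives $[\,\overline{\partial},a]=\sum_{j=1}^n\delta_j(a)\mu_j$ and $[\partial,a]=\sum_{j=1}^n\delta_j^\prime(a)\nu_j$. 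The first and main step is the purely algebraic claim that $\{\nu_1,\dots,\nu_n,\mu_1,\dots,\mu_n\}$ generates an exterior algebra on $2n$ generators inside $M_N(\mathbb{C})\otimes M_N(\mathbb{C})$. I would check $\mu_j^2=\nu_j^2=0$, $\{\mu_p,\mu_q\}=\{\nu_p,\nu_q\}=0$ for $p\neq q$, and the one slightly surprising relation $\{\nu_j,\mu_k\}=0$ for all $j,k$: even though $\{\eta_j,\eta_j^\prime\}=-4\neq0$, in the tensor product the mixed $\sigma$‑terms cancel and all that survives is $\tfrac{1}{16}(1\otimes\{\eta_j,\eta_j^\prime\})-\tfrac{1}{16}(\{\eta_j,\eta_j^\prime\}\otimes1)=0$. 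Since the symmetric form $v,w\mapsto\{v,w\}$ then vanishes on the $2n$‑dimensional span $V$ of the $\nu$'s and $\mu$'s, multiplication defines an algebra homomorphism $\Lambda^\bullet V\to M_N(\mathbb{C})\otimes M_N(\mathbb{C})$; its image is the span of the ordered products of the elements $\xi_k:=\tfrac12(1\otimes\gamma_k)+\tfrac{i\varepsilon^\prime}{2}(\gamma_k\otimes\sigma)$ (because $[d,a]=\sum_k\partial_k(a)\xi_k$ and the $\xi_k$ span $V$), and those $2^{2n}$ products are linearly independent by the reasoning in the proof of Proposition (\ref{N=(1,1) differential form for torus}); hence the homomorphism is injective and the products $\nu_I\mu_J$, with $I,J\subseteq\{1,\dots,n\}$ written in increasing order, are linearly independent.

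Given this, parts (1)--(3) run parallel to the $\ell=1,2$ cases of Lemma (\ref{complex differential form for torus}). Expanding $a_0[\,\overline{\partial},a_1]\cdots[\,\overline{\partial},a_\ell]=\sum a_0\delta_{j_1}(a_1)\cdots\delta_{j_\ell}(a_\ell)\,\mu_{j_1}\cdots\mu_{j_\ell}$ and using $\mu_p^2=0$, $\{\mu_p,\mu_q\}=0$ reduces the sum to strictly increasing multi‑indices, which by linear independence yields an injective $\mathcal{A}_\Theta$‑bimodule map $\Omega^{0,\ell}_{\partial,\overline{\partial}}(\mathcal{A}_\Theta)\hookrightarrow\mathcal{A}_\Theta^{\binom{n}{\ell}}$. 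Surjectivity follows by evaluating it on $a\,U_{2j_1}^*\cdots U_{2j_\ell}^*[\,\overline{\partial},U_{2j_\ell}]\cdots[\,\overline{\partial},U_{2j_1}]$, which telescopes to a nonzero scalar multiple of $a\,\mu_{j_1}\cdots\mu_{j_\ell}$ because $\partial_m(U_k)$ is a scalar multiple of $\delta_{mk}U_k$ and the constant matrices commute with $\mathcal{A}_\Theta$. This is part (2); part (1) is verbatim with $\nu_j,\delta_j^\prime$ in place of $\mu_j,\delta_j$. For part (3), any product of more than $n$ of the $n$ elements $\mu_1,\dots,\mu_n$ (respectively $\nu_1,\dots,\nu_n$) repeats an index and vanishes, so $\Omega^{0,\ell}_{\partial,\overline{\partial}}(\mathcal{A}_\Theta)=\Omega^{\ell,0}_{\partial,\overline{\partial}}(\mathcal{A}_\Theta)=\{0\}$ for $\ell>n$.

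For part (4) I would note that $\xi_{2j}=\nu_j+\mu_j$ and $\xi_{2j-1}=-i(\nu_j-\mu_j)$ is a degree‑preserving linear isomorphism of $V$, so under $\Lambda^\bullet V\cong(\text{the generated subalgebra})$ the degree‑$r$ piece satisfies $\Lambda^rV=\bigoplus_{p+q=r}\bigl(\Lambda^p\langle\nu_1,\dots,\nu_n\rangle\otimes\Lambda^q\langle\mu_1,\dots,\mu_n\rangle\bigr)$, the usual bigrading of the exterior algebra of a direct sum. Since $\Omega^r_d(\mathcal{A}_\Theta)$ is identified with $\mathcal{A}_\Theta\otimes\Lambda^rV$ (via Proposition (\ref{N=(1,1) differential form for torus}) together with this change of generators) while $\Omega^{p,q}_{\partial,\overline{\partial}}(\mathcal{A}_\Theta)$ --- independently of the order in which the $\partial$'s and $\overline{\partial}$'s occur, since the $\nu$'s and $\mu$'s all anticommute --- is identified with $\mathcal{A}_\Theta\otimes\bigl(\Lambda^p\langle\nu\rangle\otimes\Lambda^q\langle\mu\rangle\bigr)$, one obtains the internal direct sum $\Omega^r_d(\mathcal{A}_\Theta)\cong\bigoplus_{p+q=r}\Omega^{p,q}_{\partial,\overline{\partial}}(\mathcal{A}_\Theta)$; at the level of ranks this recovers the Vandermonde identity $\binom{2n}{r}=\sum_{p+q=r}\binom{n}{p}\binom{n}{q}$.

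The hard part is the opening matrix computation: everything else is bookkeeping already rehearsed in Proposition (\ref{N=(1,1) differential form for torus}) and Lemma (\ref{complex differential form for torus}), but the whole proposition hinges on the $\nu$'s and $\mu$'s generating a full exterior algebra on $2n$ generators, and the only genuinely delicate point there is the cancellation producing $\{\nu_j,\mu_j\}=0$.
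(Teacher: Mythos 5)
Your argument is correct, and for parts $(1)$--$(3)$ it is essentially the paper's: the paper also reduces everything to the anticommutation relations among the constant matrices (your $\mu_j,\nu_j$) and proves surjectivity by the same telescoping trick with the unitaries $U_{2j}$, phrasing the rest as ``induction on $\ell$'' exactly where you phrase it as the statement that the $\mu$'s and $\nu$'s generate an exterior algebra. Your packaging has two advantages: it absorbs the $n=1$ case, which the paper handles separately at the start of its proof, and it records explicitly the one relation the paper never writes down, namely $\{\nu_j,\mu_j\}=0$ via the cancellation of $\tfrac{1}{16}\bigl(1\otimes\{\eta_j,\eta_j'\}\bigr)$ against $-\tfrac{1}{16}\bigl(\{\eta_j,\eta_j'\}\otimes 1\bigr)$ (your computation checks out: with $\{\gamma_p,\gamma_q\}=-2\delta_{pq}$ one gets $\{\eta_j,\eta_j'\}=-4$, and the $\sigma$-cross terms vanish since $\{\eta_j,\sigma\}=0$). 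Where you genuinely diverge is part $(4)$: the paper does not argue through the exterior algebra at all, but invokes Proposition $[2.33]$ of \cite{FGR2}, which reduces the decomposition $\Omega_d^r\cong\bigoplus_{p+q=r}\Omega^{p,q}_{\partial,\overline{\partial}}$ to the single check that $[T,\omega]\in\Omega^1_d(\mathcal{A}_\Theta)$ for every $\omega\in\Omega^1_d(\mathcal{A}_\Theta)$, which it verifies on $\omega=a[d,b]$. Your route is more self-contained (no appeal to the general machinery of \cite{FGR2}) and gives the isomorphism concretely through the bigrading induced by the change of generators $\xi_{2j}=\nu_j+\mu_j$, $\xi_{2j-1}=-i(\nu_j-\mu_j)$; the price is that it leans on injectivity of $\Lambda^\bullet V\to M_N(\mathbb{C})\otimes M_N(\mathbb{C})$, i.e.\ linear independence of all $2^{2n}$ ordered products --- but that is precisely the point the paper itself only asserts ``by induction'' in Proposition (\ref{N=(1,1) differential form for torus}), so you are at the same level of rigor, and your reliance on it is honestly flagged.
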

\begin{proof}
The case of $n=1$ should be treated separately. In this case of $\mathcal{A}_\theta$,
\begin{center}
$[\,\overline{\partial},a]=\frac{1}{4}(\partial_2+i\partial_1)(a)\otimes 1\otimes(\gamma_2-i\gamma_1)+\frac{i\varepsilon^\prime}{4}(\partial_2+
i\partial_1)(a)\otimes(\gamma_2-i\gamma_1)\otimes\sigma$
\end{center}
for all $a\in\mathcal{A}_\theta$. Since, $(\gamma_2-i\gamma_1)^2=\{\gamma_2-i\gamma_1,\sigma\}=0$, one gets that $[\,\overline{\partial},a]
[\,\overline{\partial},b]=0$ for all $a,b\in\mathcal{A}_\theta$. Part $(1,2,3)$ now follows by induction on $\ell$ in Lemma (\ref{complex differential form for torus}),
similarly as in Propn. (\ref{N=(1,1) differential form for torus}). To show Part $(4)$ recall from Propn. $[2.33]$ in (\cite{FGR2}) that it is
enough to show $[T,\omega]\in\Omega_d^1(\mathcal{A}_\Theta)$ for all $\omega\in\Omega_d^1(\mathcal{A}_\Theta)$, where $T=\frac{1}{2}(\mathcal{T}-
i\mathcal{I})$ is as in Propn. (\ref{sufficient condition for complex structure}). Observe that if $\omega=a[d,b]$ then $[T,\omega]=a[\partial,b]
\in\Omega^{1,0}_{\partial,\overline{\partial}}(\mathcal{A}_\Theta)$. By Propn. (\ref{N=(1,1) differential form for torus}) and Lemma
(\ref{complex differential form for torus}) we see that
\begin{center}
$\Omega_d^1(\mathcal{A}_\Theta)\cong\Omega^{1,0}_{\partial,\overline{\partial}}(\mathcal{A}_\Theta)\bigoplus\Omega^{0,1}_{\partial,\overline{\partial}}
(\mathcal{A}_\Theta)$
\end{center}
as $\mathcal{A}_\Theta$-bimodules. Hence, we conclude that $[T,\omega]\in\Omega_d^1(\mathcal{A}_\Theta)$ for all $\omega\in\Omega_d^1(\mathcal{A}_\Theta)$.
This concludes Part $(4)$.
\end{proof}
\medskip

\subsection{Holomorphic vector bundle over noncommutative \texorpdfstring{$2n$}{Lg}-tori}\texorpdfstring{$\newline$}{Lg}
Results of this subsection are partly motivated by (\cite{BS}) and computations in (\cite{CG1}).

\begin{proposition}\label{holomorphic elements}
The algebra $\mathcal{O}(\mathcal{A}_\Theta)$ of holomorphic elements in $\mathcal{A}_\Theta$ is $\,\mathbb{C}$.
\end{proposition}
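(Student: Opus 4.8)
The plan is to unwind Definition~(\ref{algebra of holomorphic elements}) using the explicit form of the operator $\overline{\partial}$ on $\mathcal{A}_\Theta$ computed in Lemma~(\ref{complex differential form for torus}), and then to observe that the equation $\overline{\partial}a=0$ forces the Fourier expansion of $a$ to be trivial. The inclusion $\mathbb{C}=\mathbb{C}\cdot 1\subseteq\mathcal{O}(\mathcal{A}_\Theta)$ is immediate since $\overline{\partial}$ is a derivation and $[\,\overline{\partial},1]=0$, so the content is the reverse inclusion.

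First I would recall from the proof of Lemma~(\ref{complex differential form for torus}) that, with $\delta_j:=\partial_{2j}+i\partial_{2j-1}$ and $\eta_j:=\gamma_{2j}-i\gamma_{2j-1}$ for $j=1,\ldots,n$, one has
\[
[\,\overline{\partial},a]=\sum_{j=1}^{n}\frac{1}{4}\,\delta_j(a)\otimes 1\otimes\eta_j+\frac{i\varepsilon^\prime}{4}\,\delta_j(a)\otimes\eta_j\otimes\sigma
\]
for every $a\in\mathcal{A}_\Theta$ (for $n=1$ this is exactly the formula in the proof of Proposition~(\ref{higher complex differential form for torus})). Since the elements $\bigl\{\,1\otimes\eta_j+i\varepsilon^\prime\,\eta_j\otimes\sigma:j=1,\ldots,n\,\bigr\}$ were shown there to be linearly independent in $M_N(\mathbb{C})\otimes M_N(\mathbb{C})$, the identity $\overline{\partial}a=0$ holds if and only if $\delta_j(a)=0$ for all $j\in\{1,\ldots,n\}$; equivalently, $\mathcal{O}(\mathcal{A}_\Theta)=\bigcap_{j=1}^{n}\ker\delta_j$.

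Next I would compute the action of the basic derivations on the monomial basis. Because $\alpha_{\mathbf z}$ is an automorphism with $\alpha_{\mathbf z}(U_\ell)=z_\ell U_\ell$, one gets $\alpha_{\mathbf z}(U_1^{m_1}\cdots U_{2n}^{m_{2n}})=z_1^{m_1}\cdots z_{2n}^{m_{2n}}\,U_1^{m_1}\cdots U_{2n}^{m_{2n}}$, and differentiating along the $k$-th coordinate one-parameter subgroup yields $\partial_k(U_1^{m_1}\cdots U_{2n}^{m_{2n}})=c\,i\,m_k\,U_1^{m_1}\cdots U_{2n}^{m_{2n}}$ for a fixed nonzero real constant $c$; thus each $\partial_k$ acts diagonally on the Fourier modes. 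Hence for $a=\sum_{\mathbf m}\alpha_{\mathbf m}U_1^{m_1}\cdots U_{2n}^{m_{2n}}$,
\[
\delta_j(a)=c\sum_{\mathbf m}\bigl(i\,m_{2j}-m_{2j-1}\bigr)\,\alpha_{\mathbf m}\,U_1^{m_1}\cdots U_{2n}^{m_{2n}},
\]
and if $a\in\bigcap_{j}\ker\delta_j$ then $i\,m_{2j}-m_{2j-1}=0$ for every $j$ and every $\mathbf m$ with $\alpha_{\mathbf m}\neq 0$. As the $m_\ell$ are integers this forces $m_{2j}=m_{2j-1}=0$ for all $j$, i.e.\ $\mathbf m=\mathbf 0$, so $a=\alpha_{\mathbf 0}\,1\in\mathbb{C}$. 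Therefore $\mathcal{O}(\mathcal{A}_\Theta)=\mathbb{C}$.

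There is no real obstacle here: the only point needing care is the passage from $\ker\overline{\partial}$ to $\bigcap_j\ker\delta_j$, which depends on the linear independence of the relevant Clifford elements, and that has already been established in the proof of Lemma~(\ref{complex differential form for torus}). This is the expected noncommutative analogue of the classical fact that a complex torus carries no nonconstant global holomorphic functions.
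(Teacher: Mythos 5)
Your proposal is correct and follows essentially the same route as the paper: both reduce $\overline{\partial}a=0$ to the system $(\partial_{2j}+i\partial_{2j-1})(a)=0$ via the explicit form of $\Omega^{0,1}_{\partial,\overline{\partial}}(\mathcal{A}_\Theta)\cong\mathcal{A}_\Theta^n$ established in Lemma~(\ref{complex differential form for torus}), and then observe on Fourier modes that $m_{2j}+im_{2j-1}=0$ forces both integers to vanish. The only cosmetic difference is that you re-derive the reduction from the linear independence of the Clifford elements rather than quoting the bimodule isomorphism directly, which amounts to the same argument.
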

\begin{proof}
From Lemma (\ref{complex differential form for torus}),
\begin{align*}
\overline{\partial}:\mathcal{A}_\Theta &\longrightarrow \Omega^{0,1}_{\partial,\overline{\partial}}(\mathcal{A}_\Theta)\cong\mathcal{A}_\Theta^n\\
a &\longmapsto ((\partial_2+i\partial_1)(a),\ldots,(\partial_{2n}+i\partial_{2n-1})(a))\,.
\end{align*}
Hence, by Defn. (\ref{algebra of holomorphic elements}),
\begin{center}
$\mathcal{O}(\mathcal{A}_\Theta)=\{\,a\in\mathcal{A}_\Theta:(\partial_{j+1}+i\partial_j)(a)=0\,\,\forall\,j=1,\ldots,2n\,;\,j\,\,odd\}\,.$
\end{center}
Arbitrary $\,a\in\mathcal{A}_\Theta$ is of the form $\sum_{(\ell_1,\ldots,\ell_{2n})\in\mathbb{Z}^{2n}}\,\alpha_{\ell_1,\ldots,\ell_{2n}}U_1^{\ell_1}
\ldots U_{2n}^{\ell_{2n}}$ where $\alpha_{\ell_1,\ldots,\ell_{2n}}\in\mathbb{S}(\mathbb{Z}^{2n})$ and hence, for any $j\in\{1,\ldots,2n\}$ with $j$ odd we have
\begin{eqnarray*}
(\partial_{j+1}+i\partial_j)(a) & = & \sum\,(\ell_{j+1}+i\ell_j)\alpha_{\ell_1,\ldots,\ell_{2n}}U_1^{\ell_1}\ldots U_{2n}^{\ell_{2n}}\,.
\end{eqnarray*}
This expression is equal to zero implies that $(\ell_{j+1}+i\ell_j)\alpha_{\ell_1,\ldots,\ell_{2n}}=0$. Hence, $\ell_{j+1}=\ell_j=0$. Thus, $a$ is
of the form $\sum\,\alpha_{\ell_1,\ldots,\ell_{2n}}U_1^{\ell_1}\ldots\widehat{U_j^{\ell_j}}\widehat{U_{j+1}^{\ell_{j+1}}}\ldots U_{2n}^{\ell_{2n}}$.
This is true for all $j\in\{1,\ldots,2n\},\,j$ is odd. Hence, we conclude that $a\in\mathbb{C}1$, which proves $\mathcal{O}(\mathcal{A}_\Theta)\cong\mathbb{C}$.
\end{proof}

\begin{corollary}
Space of holomorphic sections of any free module $\mathcal{E}_0=\mathcal{A}_\Theta^m$ over $\mathcal{A}_\Theta$ is $\mathbb{C}^m$.
\end{corollary}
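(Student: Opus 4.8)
The plan is to obtain this as an immediate consequence of the two preceding results, Corollary \ref{space of holomorphic section} and Proposition \ref{holomorphic elements}, specialized to the noncommutative $2n$-torus. First I would fix on the free module $\mathcal{E}_0=\mathcal{A}_\Theta^m$ the canonical flat $\overline{\partial}$-connection $\nabla_0$ produced in Proposition \ref{holomorphic structure on free module}, so that $(\mathcal{E}_0,\nabla_0)$ is a bona fide holomorphic vector bundle over $\mathcal{A}_\Theta$ and the phrase ``space of holomorphic sections'' makes sense. One should note that the statement refers to this canonical holomorphic structure; a different flat $\overline{\partial}$-connection on $\mathcal{E}_0$ could in principle yield a different $H^0$, but the natural choice is $\nabla_0$.

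Next I would invoke Corollary \ref{space of holomorphic section}, which identifies $H^0(\mathcal{E}_0,\nabla_0)$ with the free $\mathcal{O}(\mathcal{A}_\Theta)$-module $\mathcal{O}(\mathcal{A}_\Theta)^m$ of rank $m$ over the algebra of holomorphic elements. Concretely, from the explicit form of $\nabla_0$ one has $H^0(\mathcal{E}_0,\nabla_0)=\{(a_1,\ldots,a_m):[\,\overline{\partial},a_j]=0\ \forall\,j\}$, which is precisely $\mathcal{O}(\mathcal{A}_\Theta)^m$ by Definition \ref{algebra of holomorphic elements}.

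Finally I would apply Proposition \ref{holomorphic elements}, which computes $\mathcal{O}(\mathcal{A}_\Theta)\cong\mathbb{C}$ for the noncommutative $2n$-torus using the Fourier-series description of $\mathcal{A}_\Theta$ together with the explicit expression of $\overline{\partial}$ from Lemma \ref{complex differential form for torus}. Substituting this identification into the previous one gives $H^0(\mathcal{E}_0,\nabla_0)\cong\mathbb{C}^m$, completing the argument. There is no genuine obstacle here: all the real content is already carried by Proposition \ref{holomorphic elements}, and the only point requiring a little care is the bookkeeping, namely that the rank of the free $\mathcal{O}(\mathcal{A}_\Theta)$-module of holomorphic sections matches the rank $m$ of $\mathcal{E}_0$, which is exactly what Corollary \ref{space of holomorphic section} records.
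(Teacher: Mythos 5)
Your proposal is correct and follows exactly the paper's own route: the paper's proof is the one-line combination of Corollary \ref{space of holomorphic section} (giving $H^0(\mathcal{E}_0,\nabla_0)\cong\mathcal{O}(\mathcal{A}_\Theta)^m$ for the canonical flat $\overline{\partial}$-connection) with Proposition \ref{holomorphic elements} (giving $\mathcal{O}(\mathcal{A}_\Theta)\cong\mathbb{C}$). Your additional remark that the statement implicitly refers to the canonical connection $\nabla_0$ is a reasonable clarification but does not change the argument.
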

\begin{proof}
Follows from Cor. (\ref{space of holomorphic section}) and previous  Propn. (\ref{holomorphic elements}).
\end{proof}

\begin{lemma}
$\Omega_d^r(\mathcal{A}_\Theta),\,\Omega^{\ell,0}_{\partial,\overline{\partial}}(\mathcal{A}_\Theta),\,\Omega^{0,\ell}_{\partial,\overline{\partial}}
(\mathcal{A}_\Theta)$ all are holomorphic vector bundles over $\mathcal{A}_\Theta$.
\end{lemma}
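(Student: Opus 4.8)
The plan is to deduce the statement directly from Proposition \ref{holomorphic structure on free module}, which shows that every free left $\mathcal{A}_\Theta$-module, equipped with the canonical flat $\overline{\partial}$-connection $\nabla_0$, is a holomorphic vector bundle. Hence it suffices to observe that each module in the list is a free left $\mathcal{A}_\Theta$-module of finite rank, and then to transport the holomorphic structure along the relevant module isomorphism.

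First, by Proposition \ref{N=(1,1) differential form for torus} there is an $\mathcal{A}_\Theta$-bimodule isomorphism $\Omega^r_d(\mathcal{A}_\Theta)\cong\mathcal{A}_\Theta^{\frac{2n!}{r!(2n-r)!}}$ for $0\le r\le 2n$, with $\Omega^r_d(\mathcal{A}_\Theta)=\{0\}$ for $r>2n$; by Proposition \ref{higher complex differential form for torus} there are $\mathcal{A}_\Theta$-bimodule isomorphisms $\Omega^{\ell,0}_{\partial,\overline{\partial}}(\mathcal{A}_\Theta)\cong\mathcal{A}_\Theta^{\frac{n!}{\ell!(n-\ell)!}}\cong\Omega^{0,\ell}_{\partial,\overline{\partial}}(\mathcal{A}_\Theta)$ for $1\le\ell\le n$, both vanishing for $\ell>n$. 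Forgetting the right actions, every module in the list is therefore a free left $\mathcal{A}_\Theta$-module of finite rank (the zero module being free of rank $0$, a holomorphic vector bundle with the zero connection), hence in particular finitely generated projective.

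It then remains only to record the routine fact that being a holomorphic vector bundle is preserved under left $\mathcal{A}_\Theta$-module isomorphisms: if $\phi:\mathcal{E}\longrightarrow\mathcal{A}_\Theta^m$ is such an isomorphism, then $\nabla:=(\mathrm{id}\otimes\phi^{-1})\circ\nabla_0\circ\phi$ satisfies the Leibniz rule $\nabla(a\xi)=[\,\overline{\partial},a]\otimes\xi+a\nabla(\xi)$ and has $\overline{\partial}$-curvature $(\mathrm{id}\otimes\phi^{-1})\circ\varTheta_{\nabla_0}\circ\phi=0$, so $(\mathcal{E},\nabla)$ is again a holomorphic vector bundle. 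Applying this to the isomorphisms above yields the claim for $\Omega^r_d(\mathcal{A}_\Theta)$, $\Omega^{\ell,0}_{\partial,\overline{\partial}}(\mathcal{A}_\Theta)$ and $\Omega^{0,\ell}_{\partial,\overline{\partial}}(\mathcal{A}_\Theta)$. There is no genuine obstacle here: all the substance is already contained in the explicit freeness computations of Propositions \ref{N=(1,1) differential form for torus} and \ref{higher complex differential form for torus}, and the present lemma is a bookkeeping corollary of those together with Proposition \ref{holomorphic structure on free module}.
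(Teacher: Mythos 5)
Your proposal is correct and follows essentially the same route as the paper, which likewise deduces the lemma by combining the freeness computations of Propositions (\ref{N=(1,1) differential form for torus}) and (\ref{higher complex differential form for torus}) with Proposition (\ref{holomorphic structure on free module}); you have merely spelled out the (routine) transport of the flat $\overline{\partial}$-connection along the module isomorphisms, which the paper leaves implicit.
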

\begin{proof}
Follows from Propn. (\ref{holomorphic structure on free module}) and (\ref{N=(1,1) differential form for torus}\,,\,\ref{higher complex differential form for torus}).
\end{proof}

\begin{proposition}\label{criterion for obtaining holomorphic structure}
A holomorphic structure on a f.g.p. left module $\mathcal{E}$ over $\mathcal{A}_\Theta$ is given by $n$-tuple $(\nabla_1,\ldots,\nabla_n)$ of
$\mathbb{C}$-linear maps $\nabla_j:\mathcal{E}\longrightarrow\mathcal{E}$ such that the following conditions are satisfied
\begin{enumerate}
\item $\nabla_j(a\xi)=a\nabla_j(\xi)+\delta_j(a)\xi\quad\forall\,a\in\mathcal{A}_\Theta$,
\item $[\nabla_\ell,\nabla_r]=0\quad\forall\,1\leq l<r\leq n$.
\end{enumerate}
where, $\delta_j=\partial_{2j}+i\partial_{2j-1}$.
\end{proposition}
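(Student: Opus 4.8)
The plan is to translate the intrinsic notion of a holomorphic structure --- a flat $\overline{\partial}$-connection on $\mathcal{E}$ in the sense of Definition (\ref{holomorphic vector bundle}) --- into component form, using the explicit description of the complex differential forms obtained in Lemma (\ref{complex differential form for torus}) and Corollary (\ref{product rule of basis}).

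First I would fix the standard free $\mathcal{A}_\Theta$-module basis $\{e_1,\ldots,e_n\}$ of $\Omega^{0,1}_{\partial,\overline{\partial}}(\mathcal{A}_\Theta)\cong\mathcal{A}_\Theta^{\,n}$ from Corollary (\ref{product rule of basis}); by the proof of Lemma (\ref{complex differential form for torus})(3) it is normalized so that $[\,\overline{\partial},a]=\sum_{j=1}^{n}\delta_j(a)\,e_j$ for all $a\in\mathcal{A}_\Theta$, with $\delta_j=\partial_{2j}+i\partial_{2j-1}$ (concretely $e_j$ is a nonzero scalar multiple of $U_{2j}^{*}[\,\overline{\partial},U_{2j}]$). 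Since $\Omega^{0,1}_{\partial,\overline{\partial}}(\mathcal{A}_\Theta)$ is free, there is the left $\mathcal{A}_\Theta$-module identification $\Omega^{0,1}_{\partial,\overline{\partial}}(\mathcal{A}_\Theta)\otimes_{\mathcal{A}_\Theta}\mathcal{E}\cong\mathcal{E}^{\,n}$ carrying $e_j\otimes\eta$ to $\eta$ placed in the $j$-th slot. Under this identification, a $\mathbb{C}$-linear map $\nabla\colon\mathcal{E}\to\Omega^{0,1}_{\partial,\overline{\partial}}(\mathcal{A}_\Theta)\otimes_{\mathcal{A}_\Theta}\mathcal{E}$ is exactly an $n$-tuple $(\nabla_1,\ldots,\nabla_n)$ of $\mathbb{C}$-linear maps $\nabla_j\colon\mathcal{E}\to\mathcal{E}$ via $\nabla\xi=\sum_{j}e_j\otimes\nabla_j(\xi)$, and the $\overline{\partial}$-connection Leibniz rule $\nabla(a\xi)=[\,\overline{\partial},a]\otimes\xi+a\,\nabla\xi$ translates, slot by slot, into condition $(1)$.

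Next I would compute the $\overline{\partial}$-curvature $\varTheta=\nabla\circ\nabla\colon\mathcal{E}\to\Omega^{0,2}_{\partial,\overline{\partial}}(\mathcal{A}_\Theta)\otimes_{\mathcal{A}_\Theta}\mathcal{E}$. The key preliminary observation is that $\overline{\partial}e_j=0$: this follows from $\overline{\partial}^{\,2}=0$ on $\mathcal{A}_\Theta$ applied to suitable generators $U_{2j}$, together with the relations $e_ke_\ell=-e_\ell e_k$ of Corollary (\ref{product rule of basis}) and the commutativity $\delta_k\delta_\ell=\delta_\ell\delta_k$ (as the $\partial_r$ pairwise commute). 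Extending $\nabla$ to $\Omega^{0,\bullet}_{\partial,\overline{\partial}}(\mathcal{A}_\Theta)\otimes_{\mathcal{A}_\Theta}\mathcal{E}$ by the graded rule $\nabla(\omega\otimes\xi)=(-1)^{\deg\omega}\omega\,\nabla\xi+\overline{\partial}\omega\otimes\xi$ and using $\overline{\partial}e_j=0$ then gives
\[
\varTheta(\xi)\;=\;-\sum_{j,k}e_je_k\otimes\nabla_k\nabla_j(\xi)\;=\;\sum_{1\le j<k\le n}e_je_k\otimes[\nabla_j,\nabla_k](\xi),
\]
where the second equality uses $e_je_k=-e_ke_j$ and $e_j^{\,2}=0$ from Corollary (\ref{product rule of basis}). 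As $\{e_je_k:1\le j<k\le n\}$ is a free $\mathcal{A}_\Theta$-module basis of $\Omega^{0,2}_{\partial,\overline{\partial}}(\mathcal{A}_\Theta)$, the curvature $\varTheta$ vanishes precisely when $[\nabla_j,\nabla_k]=0$ for all $1\le j<k\le n$, i.e.\ exactly condition $(2)$. Conversely, any $n$-tuple $(\nabla_1,\ldots,\nabla_n)$ satisfying $(1)$ defines a $\overline{\partial}$-connection on $\mathcal{E}$ by $\nabla\xi=\sum_je_j\otimes\nabla_j(\xi)$, and by the same computation it is flat iff $(2)$ holds; this yields the claimed bijective correspondence.

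I expect the only real (and minor) obstacle to be the bookkeeping: verifying $\overline{\partial}e_j=0$ against the conventions of the $\overline{\partial}$-dga, and tracking the graded sign when extending $\nabla$ through the curvature. Both become routine once the basis $\{e_j\}$ of Corollary (\ref{product rule of basis}) is in place and its anticommutation relations are used; note that the argument actually needs only that $\Omega^{0,1}_{\partial,\overline{\partial}}(\mathcal{A}_\Theta)$ and $\Omega^{0,2}_{\partial,\overline{\partial}}(\mathcal{A}_\Theta)$ are free with the stated structure.
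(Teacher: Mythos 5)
Your proposal is correct and follows essentially the same route as the paper: identify $\Omega^{0,1}_{\partial,\overline{\partial}}(\mathcal{A}_\Theta)\otimes_{\mathcal{A}_\Theta}\mathcal{E}$ with $\mathcal{E}^n$ via the free basis $\{e_j\}$ so that the Leibniz rule becomes condition $(1)$, then compute the curvature using $\overline{\partial}e_j=0$ and the anticommutation relations of Corollary (\ref{product rule of basis}) to reduce flatness to condition $(2)$. The sign bookkeeping and the reliance on freeness of $\Omega^{0,2}_{\partial,\overline{\partial}}(\mathcal{A}_\Theta)$ with basis $\{e_\ell e_r\}$ match the paper's argument exactly.
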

\begin{proof}
Recall from Lemma (\ref{complex differential form for torus}) that $\Omega_{\partial,\overline{\partial}}^{0,1}(\mathcal{A}_\Theta)\cong\mathcal{A}_\Theta^n$.
Hence, any $\overline{\partial}$-connection $\nabla:\mathcal{E}\longrightarrow\Omega_{\partial,\overline{\partial}}^{0,1}(\mathcal{A}_\Theta)\otimes
\mathcal{E}$ on $\mathcal{E}$ will be implemented by $n$-tuple of $\mathbb{C}$-linear maps $(\nabla_1,\ldots,\nabla_n)$ with each $\nabla_j:
\mathcal{E}\longrightarrow\mathcal{E}$. Since, $\nabla$ is a $\overline{\partial}$-connection, it is easy to verify that
\begin{center}
$\nabla_j(a\xi)=a\nabla_j(\xi)+\delta_j(a)\xi\quad\forall\,a\in\mathcal{A}_\Theta$ and $\xi\in\mathcal{E}$
\end{center}
where, $\delta_j=\partial_{2j}+i\partial_{2j-1}$. If $\nabla$ induces a holomorphic structure on $\mathcal{E}$ then $\varTheta_\nabla=0$. Now, if
$\{e_1,\ldots,e_n\}$ denotes the standard free module basis of $\,\Omega_{\partial,\overline{\partial}}^{0,1}(\mathcal{A}_\Theta)\cong\mathcal{A}_\Theta^n$
then observe from Lemma (\ref{complex differential form for torus}) that $\overline{\partial}^{\,\prime}:\Omega_{\partial,\overline{\partial}}^{0,1}
(\mathcal{A}_\Theta)\longrightarrow\Omega_{\partial,\overline{\partial}}^{0,2}(\mathcal{A}_\Theta)$ satisfies $\overline{\partial}^{\,\prime}(e_j)=0$.
Hence, for any $\xi\in\mathcal{E}$, we have
\begin{eqnarray*}
\varTheta_\nabla(\xi) & = & \sum_{j=1}^n\nabla(e_j\otimes\nabla_j(\xi))\\
& = & \sum_{j=1}^n-e_j\nabla(\nabla_j(\xi))+\overline{\partial}^{\,\prime}(e_j)\otimes\nabla_j(\xi)\\
& = & \sum_{\ell,j=1}^n-e_je_\ell\otimes\nabla_\ell(\nabla_j(\xi))\\
& = & \sum_{\ell<j}e_\ell e_j\otimes[\nabla_\ell,\nabla_j](\xi)
\end{eqnarray*}
because $e_pe_q+e_qe_p=0$ for $p\neq q$ and $e_p^2=0$ (Cor. [\ref{product rule of basis}]). Since, $\{e_\ell e_j:1\leq\ell<j\leq n\}$ is the standard
free module basis of $\Omega_{\partial,\overline{\partial}}^{0,2}(\mathcal{A}_\Theta)$ we get $\varTheta_\nabla=0$ if and only if $[\nabla_\ell,
\nabla_r]=0$ for all $1\leq\ell<r\leq n$. This fulfills our claim.
\end{proof}

Observe from Propn. (\ref{N=(1,1) differential form for torus}) and Lemma (\ref{complex differential form for torus}) that $\Omega_d^1
(\mathcal{A}_\Theta)\cong\Omega_{\partial,\overline\partial}^{1,0}(\mathcal{A}_\Theta)\bigoplus\Omega_{\partial,\overline\partial}^{0,1}
(\mathcal{A}_\Theta)$. This is in fact a orthogonal direct sum by Propn. $[2.35]$ in (\cite{FGR2}). Hence, any $\mathbb{C}$-linear map $\nabla:
\mathcal{E}\longrightarrow\Omega_d^1(\mathcal{A}_\Theta)\otimes_{\mathcal{A}_\Theta}\mathcal{E}$ satisfying $\nabla(a\xi)=a\nabla(\xi)+[d,a]\otimes
\xi$, i,e. a $d$-connection, can be written as $\nabla^{1,0}+\nabla^{0,1}$. Let $\pi^{1,0}$ and $\pi^{0,1}$ be the orthogonal projections onto
$\Omega_{\partial,\overline\partial}^{1,0}$ and $\Omega_{\partial,\overline\partial}^{0,1}$ respectively. Note that these are $\mathcal{A}_\Theta$-module maps.

\begin{proposition}
Let $\mathcal{E}$ be a f.g.p. left module over $\mathcal{A}_\Theta$ and $\nabla:\mathcal{E}\longrightarrow\Omega_d^1(\mathcal{A}_\Theta)
\otimes_{\mathcal{A}_\Theta}\mathcal{E}$ be a $d$-connection whose curvature has vanishing $(0,2)$-component. Then $\nabla$ induces a holomorphic
structure on $\mathcal{E}$. In particular, any flat $d$-connection induces a holomorphic structure on $\mathcal{E}$.
\end{proposition}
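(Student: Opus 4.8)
The plan is to take the $(0,1)$-component of $\nabla$ and to identify its $\overline{\partial}$-curvature with the $(0,2)$-component of the curvature of $\nabla$. As recalled just above, the orthogonal bimodule splitting $\Omega_d^1(\mathcal{A}_\Theta)\cong\Omega_{\partial,\overline\partial}^{1,0}(\mathcal{A}_\Theta)\oplus\Omega_{\partial,\overline\partial}^{0,1}(\mathcal{A}_\Theta)$ (Propn.~(\ref{N=(1,1) differential form for torus}), Lemma~(\ref{complex differential form for torus}), and Propn.~$[2.35]$ in \cite{FGR2}) is preserved upon tensoring on the right by $\mathcal{E}$ over $\mathcal{A}_\Theta$, so I may write $\nabla=\nabla^{1,0}+\nabla^{0,1}$ with $\nabla^{0,1}:=(\pi^{0,1}\otimes id)\circ\nabla$. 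Since $\pi^{0,1}$ is an $\mathcal{A}_\Theta$-bimodule map with $\pi^{0,1}([d,a])=[\,\overline{\partial},a]$, applying $\pi^{0,1}\otimes id$ to the Leibniz identity $\nabla(a\xi)=a\nabla(\xi)+[d,a]\otimes\xi$ gives $\nabla^{0,1}(a\xi)=a\nabla^{0,1}(\xi)+[\,\overline{\partial},a]\otimes\xi$; hence $\nabla^{0,1}$ is a $\overline{\partial}$-connection on $\mathcal{E}$ in the sense of Defn.~(\ref{holomorphic vector bundle}).

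Next I would extend $\nabla^{0,1}$ to a degree-one operator on $\Omega_{\partial,\overline\partial}^{0,\bullet}(\mathcal{A}_\Theta)\otimes_{\mathcal{A}_\Theta}\mathcal{E}$ by the graded Leibniz rule relative to $\overline{\partial}$, exactly as $\nabla$ extends to $\Omega_d^\bullet(\mathcal{A}_\Theta)\otimes_{\mathcal{A}_\Theta}\mathcal{E}$, so that $\varTheta_{\nabla^{0,1}}=\nabla^{0,1}\circ\nabla^{0,1}$ and $\varTheta_\nabla=\nabla\circ\nabla$; by Propn.~(\ref{higher complex differential form for torus})(4) the latter lands in $\bigoplus_{p+q=2}\Omega^{p,q}_{\partial,\overline\partial}(\mathcal{A}_\Theta)\otimes_{\mathcal{A}_\Theta}\mathcal{E}$. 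Writing $\nabla\xi=\sum_i\omega_i\otimes\xi_i$ one has $\nabla^2\xi=\sum_i\bigl(d\omega_i\otimes\xi_i-\omega_i\,\nabla\xi_i\bigr)$; I then apply $\pi^{0,2}\otimes id$ and use two bigrading compatibilities, both consequences of (Propn.~$[2.32]$ in \cite{FGR2}) together with the concrete descriptions in Lemma~(\ref{complex differential form for torus}): first, on $\Omega_d^1(\mathcal{A}_\Theta)$ one has $\pi^{0,2}\circ d=\overline{\partial}\circ\pi^{0,1}$, because $\partial$ raises holomorphic degree so $d$ has no $(0,2)$-part on $\Omega^{1,0}_{\partial,\overline\partial}(\mathcal{A}_\Theta)$; second, the product $\Omega_d^1(\mathcal{A}_\Theta)\otimes_{\mathcal{A}_\Theta}\Omega_d^1(\mathcal{A}_\Theta)\to\Omega_d^2(\mathcal{A}_\Theta)$ satisfies $\pi^{0,2}(\alpha\beta)=\pi^{0,1}(\alpha)\,\pi^{0,1}(\beta)$, since a factor of bidegree $(1,0)$ pushes the product out of $\Omega^{0,2}_{\partial,\overline\partial}(\mathcal{A}_\Theta)$. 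Carrying these through the right action of $1$-forms on $\Omega_d^1(\mathcal{A}_\Theta)\otimes_{\mathcal{A}_\Theta}\mathcal{E}$, a term-by-term comparison identifies $(\pi^{0,2}\otimes id)\circ\nabla^2$ with $(\nabla^{0,1})^2$, i.e. $\varTheta_{\nabla^{0,1}}$ equals the $(0,2)$-component of $\varTheta_\nabla$.

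By hypothesis that $(0,2)$-component vanishes, so $\nabla^{0,1}$ is a flat $\overline{\partial}$-connection and $(\mathcal{E},\nabla^{0,1})$ is a holomorphic vector bundle over $\mathcal{A}_\Theta$; the final assertion is then immediate, since a flat $d$-connection has $\varTheta_\nabla=0$ and in particular vanishing $(0,2)$-component. I expect the main obstacle to be precisely the bookkeeping in the middle paragraph: one must fix the Koszul signs in the two extensions, confirm that the relevant right $\mathcal{A}_\Theta$-module structures are compatible with $\pi^{0,1}$, and verify the two bigrading identities against the explicit bimodule isomorphisms of Lemma~(\ref{complex differential form for torus}) and Propn.~(\ref{higher complex differential form for torus}); once those are in place the argument is formal.
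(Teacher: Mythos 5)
Your proposal is correct and follows essentially the same route as the paper: project onto the $(0,1)$-component via $\pi^{0,1}\otimes id$ to get a $\overline{\partial}$-connection, identify its curvature with $(\pi^{0,2}\otimes id)\varTheta_\nabla$, and conclude from the vanishing of the $(0,2)$-component. The paper states the curvature identity and defers the verification to Propn.~$[4.7]$ of \cite{BS}, whereas you sketch the bigrading compatibilities yourself, but the argument is the same.
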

\begin{proof}
Let $\nabla$ be a $d$-connection and define
\begin{align*}
\nabla^\prime:\mathcal{E} &\longrightarrow \Omega_{\partial,\overline\partial}^{0,1}(\mathcal{A}_\Theta)\otimes_{\mathcal{A}_\Theta}\mathcal{E}\\
\xi &\longmapsto (\pi^{0,1}\otimes id)\nabla(\xi)\,.
\end{align*}
Since $\pi^{0,1}$ is a left $\mathcal{A}_\Theta$-module homomorphism, it is easy to observe that $\nabla^\prime$ is a $\overline\partial$-connection.
Observe that the associated curvatures satisfy the following relation
\begin{center}
$\varTheta_{\nabla^\prime}=(\pi^{0,2}\otimes id)\varTheta_\nabla\,$.
\end{center}
Hence, if the $(0,2)$-component of the curvature $\varTheta_\nabla$ vanishes then $\nabla^\prime$ is a flat $\overline{\partial}$-connection. For
detail verification follow the proof of Propn. $[4.7]$ in (\cite{BS}). In particular, if $\varTheta_\nabla$ itself is zero i,e. $\nabla$ is $d$-flat
then $\nabla^\prime$ induces a holomorphic structure on $\mathcal{E}$.
\end{proof}

If $(\mathcal{E},\nabla)$ is a holomorphic vector bundle over $\mathcal{A}$ then
\begin{center}
$0\longrightarrow\mathcal{E}\xrightarrow{\,\,\nabla\,\,}\Omega_{\partial,\overline\partial}^{0,1}(\mathcal{A})\otimes_\mathcal{A}\mathcal{E}
\xrightarrow{\,\,\nabla\,\,}\Omega_{\partial,\overline\partial}^{0,2}(\mathcal{A})\otimes_\mathcal{A}\mathcal{E}\xrightarrow{\,\,\nabla\,\,}\ldots\ldots$
\end{center}
is a cochain complex. The cohomology groups of this complex are denoted by $H^\bullet(\mathcal{E},\nabla)$. Recall from Defn. (\ref{holomorphic section})
that the zero-th cohomology is the space of holomorphic sections on $\mathcal{E}$. It follows from the definition of connection that each $H^\bullet
(\mathcal{E},\nabla)$ is a left $\mathcal{O}(\mathcal{A})$-module. Hence, for the case of noncommutative torus $\mathcal{A}=\mathcal{A}_\Theta$,
they are $\mathbb{C}$-vector spaces (by Propn. [\ref{holomorphic elements}]).

\begin{proposition}
Every short exact sequence
\begin{center}
$0\longrightarrow(\mathcal{E},\nabla_\mathcal{E})\xrightarrow{\,\,\phi\,\,}(\mathcal{F},\nabla_\mathcal{F})\xrightarrow{\,\,\psi\,\,}
(\mathcal{G},\nabla_\mathcal{G})\longrightarrow 0$
\end{center}
of holomorphic vector bundles over $\mathcal{A}_\Theta$ induces a long exact sequence
\begin{center}
$0\longrightarrow H^0(\mathcal{E},\nabla_\mathcal{E})\xrightarrow{\,\,\,\phi^*\,\,}H^0(\mathcal{F},\nabla_\mathcal{F})\xrightarrow{\,\,\,\psi^*\,\,}
H^0(\mathcal{G},\nabla_\mathcal{G})\xrightarrow{\,\,\,\overline\delta\,\,\,}H^1(\mathcal{E},\nabla_\mathcal{E})\xrightarrow{\,\,\,\phi^*\,\,}\ldots\ldots$
\end{center}
in cohomology of $\,\mathbb{C}$-vector spaces.
\end{proposition}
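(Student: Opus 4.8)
The plan is to recognize the displayed sequences as the cochain complexes computing $H^{\bullet}(\mathcal{E},\nabla_{\mathcal{E}})$, $H^{\bullet}(\mathcal{F},\nabla_{\mathcal{F}})$, $H^{\bullet}(\mathcal{G},\nabla_{\mathcal{G}})$, to assemble them into a short exact sequence of cochain complexes, and to conclude by the standard zig-zag (snake) lemma. First I would note that for a holomorphic vector bundle $(\mathcal{V},\nabla_{\mathcal{V}})$ over $\mathcal{A}_\Theta$ the flatness of the $\overline{\partial}$-connection (Defn. \ref{holomorphic vector bundle}) gives $\nabla_{\mathcal{V}}^{2}=0$, so
\[
C^{\bullet}(\mathcal{V})\;:\;0\longrightarrow\mathcal{V}\xrightarrow{\ \nabla_{\mathcal{V}}\ }\Omega^{0,1}_{\partial,\overline{\partial}}(\mathcal{A}_\Theta)\otimes_{\mathcal{A}_\Theta}\mathcal{V}\xrightarrow{\ \nabla_{\mathcal{V}}\ }\Omega^{0,2}_{\partial,\overline{\partial}}(\mathcal{A}_\Theta)\otimes_{\mathcal{A}_\Theta}\mathcal{V}\longrightarrow\cdots
\]
is a genuine cochain complex with cohomology $H^{\bullet}(\mathcal{V},\nabla_{\mathcal{V}})$. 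Since $\phi$ and $\psi$ are morphisms of holomorphic vector bundles, they are left $\mathcal{A}_\Theta$-module maps satisfying $\nabla\circ\phi=(\mathrm{id}\otimes\phi)\circ\nabla$ and likewise for $\psi$; hence $\mathrm{id}\otimes\phi$ and $\mathrm{id}\otimes\psi$ are chain maps $C^{\bullet}(\mathcal{E})\to C^{\bullet}(\mathcal{F})\to C^{\bullet}(\mathcal{G})$.

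The technical heart is degree-wise exactness of
\[
0\longrightarrow\Omega^{0,q}_{\partial,\overline{\partial}}(\mathcal{A}_\Theta)\otimes_{\mathcal{A}_\Theta}\mathcal{E}\longrightarrow\Omega^{0,q}_{\partial,\overline{\partial}}(\mathcal{A}_\Theta)\otimes_{\mathcal{A}_\Theta}\mathcal{F}\longrightarrow\Omega^{0,q}_{\partial,\overline{\partial}}(\mathcal{A}_\Theta)\otimes_{\mathcal{A}_\Theta}\mathcal{G}\longrightarrow 0
\]
for every $q$. Here I would use that $\mathcal{G}$ is f.g.p., so the short exact sequence $0\to\mathcal{E}\to\mathcal{F}\to\mathcal{G}\to 0$ of left $\mathcal{A}_\Theta$-modules splits; a splitting $s\colon\mathcal{G}\to\mathcal{F}$ yields a splitting $\mathrm{id}\otimes s$ of the tensored sequence, so the latter is exact (one may alternatively invoke that $\Omega^{0,q}_{\partial,\overline{\partial}}(\mathcal{A}_\Theta)$ is free as an $\mathcal{A}_\Theta$-module by Propn.~\ref{higher complex differential form for torus}, hence flat). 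Consequently $0\to C^{\bullet}(\mathcal{E})\to C^{\bullet}(\mathcal{F})\to C^{\bullet}(\mathcal{G})\to 0$ is a short exact sequence of cochain complexes.

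Applying the zig-zag lemma then produces the long exact sequence, with connecting map $\overline{\delta}\colon H^{q}(\mathcal{G},\nabla_{\mathcal{G}})\to H^{q+1}(\mathcal{E},\nabla_{\mathcal{E}})$ obtained by the usual chase: lift a cocycle $g$ to some $\tilde g$ with $(\mathrm{id}\otimes\psi)\tilde g=g$, note $\nabla_{\mathcal{F}}\tilde g$ lies in the image of $\mathrm{id}\otimes\phi$, and set $\overline{\delta}[g]=[(\mathrm{id}\otimes\phi)^{-1}\nabla_{\mathcal{F}}\tilde g]$. Finally, each $H^{q}(\mathcal{V},\nabla_{\mathcal{V}})$ is a left $\mathcal{O}(\mathcal{A}_\Theta)$-module, and $\mathcal{O}(\mathcal{A}_\Theta)\cong\mathbb{C}$ by Propn.~\ref{holomorphic elements}, so these are $\mathbb{C}$-vector spaces; since all maps in the sequence are induced by $\mathcal{A}_\Theta$-module maps, they are in particular $\mathbb{C}$-linear. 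The only point requiring care is verifying that $\overline{\delta}$ is independent of the choices made in the chase and is $\mathcal{O}(\mathcal{A}_\Theta)$-linear, but this is routine once degree-wise exactness is established; thus the genuine obstacle is the splitting/tensoring step, which is handled by projectivity of $\mathcal{G}$.
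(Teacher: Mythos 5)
Your proposal is correct and follows essentially the same route as the paper: the paper likewise obtains degree-wise exactness of the tensored sequences from the freeness of $\Omega^{0,\bullet}_{\partial,\overline{\partial}}(\mathcal{A}_\Theta)$ (Propn. on higher complex differential forms) and then invokes the standard long exact sequence for a short exact sequence of cochain complexes, citing Propn. $[4.6]$ of \cite{BS}. Your alternative justification via the splitting coming from projectivity of $\mathcal{G}$ is a harmless variant of the same step, and your remarks on the $\mathbb{C}$-vector space structure match the paper's use of $\mathcal{O}(\mathcal{A}_\Theta)\cong\mathbb{C}$.
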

\begin{proof}
Since, $\Omega_{\partial,\overline\partial}^{0,\bullet}(\mathcal{A}_\Theta)$ are free modules over $\mathcal{A}_\Theta$ (Propn.
[\ref{higher complex differential form for torus}]) we get
\begin{center}
$0\longrightarrow\Omega_{\partial,\overline\partial}^{0,\bullet}(\mathcal{A}_\Theta)\otimes_{\mathcal{A}_\Theta}\mathcal{E}\xrightarrow{\,\,id
\otimes\phi\,\,}\Omega_{\partial,\overline\partial}^{0,\bullet}(\mathcal{A}_\Theta)\otimes_{\mathcal{A}_\Theta}\mathcal{F}\xrightarrow{\,\,id\otimes
\psi\,\,}\Omega_{\partial,\overline\partial}^{0,\bullet}(\mathcal{A}_\Theta)\otimes_{\mathcal{A}_\Theta}\mathcal{G}\longrightarrow 0$
\end{center}
is an exact sequence of cochain complexes which induces a long exact sequence in cohomology (See Propn. $[4.6]$ in \cite{BS}).
\end{proof}

\begin{theorem}
The category $\mathcal{H}o\ell(\mathcal{A}_\Theta)$ of holomorphic vector bundle over $\mathcal{A}_\Theta$ is an abelian category.
\end{theorem}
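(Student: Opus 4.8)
The plan is to verify the axioms of an abelian category directly, working throughout with the description of a holomorphic structure provided by Proposition~\ref{criterion for obtaining holomorphic structure}: a holomorphic vector bundle over $\mathcal{A}_\Theta$ is a f.g.p.\ left module $\mathcal{E}$ together with $\mathbb{C}$-linear maps $\nabla^{\mathcal{E}}_1,\dots,\nabla^{\mathcal{E}}_n:\mathcal{E}\longrightarrow\mathcal{E}$ obeying $\nabla^{\mathcal{E}}_j(a\xi)=a\nabla^{\mathcal{E}}_j(\xi)+\delta_j(a)\xi$ and $[\nabla^{\mathcal{E}}_\ell,\nabla^{\mathcal{E}}_r]=0$, while a morphism $(\mathcal{E},\nabla^{\mathcal{E}})\longrightarrow(\mathcal{F},\nabla^{\mathcal{F}})$ is a left $\mathcal{A}_\Theta$-module map $\phi$ with $\phi\circ\nabla^{\mathcal{E}}_j=\nabla^{\mathcal{F}}_j\circ\phi$ for every $j$. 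First I would record that $\mathcal{H}o\ell(\mathcal{A}_\Theta)$ is additive: the $\mathrm{Hom}$-sets are $\mathbb{C}$-vector spaces and composition is bilinear, the pair $(0,0)$ is a zero object, and the biproduct of $(\mathcal{E},\nabla^{\mathcal{E}})$ and $(\mathcal{F},\nabla^{\mathcal{F}})$ is $(\mathcal{E}\oplus\mathcal{F},\nabla^{\mathcal{E}}\oplus\nabla^{\mathcal{F}})$, which is again a holomorphic vector bundle because a finite direct sum of f.g.p.\ modules is f.g.p.\ and the operators $\nabla^{\mathcal{E}}_j\oplus\nabla^{\mathcal{F}}_j$ still obey the Leibniz rule and commute pairwise.

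Next, for a morphism $\phi:(\mathcal{E},\nabla^{\mathcal{E}})\longrightarrow(\mathcal{F},\nabla^{\mathcal{F}})$ I would produce its kernel and cokernel. Since $\phi\circ\nabla^{\mathcal{E}}_j=\nabla^{\mathcal{F}}_j\circ\phi$, the submodule $\ker\phi\subseteq\mathcal{E}$ is invariant under every $\nabla^{\mathcal{E}}_j$ and the submodule $\mathrm{im}\,\phi\subseteq\mathcal{F}$ is invariant under every $\nabla^{\mathcal{F}}_j$; hence the $\nabla^{\mathcal{E}}_j$ restrict to operators on $\ker\phi$, and the $\nabla^{\mathcal{F}}_j$ pass to operators $\overline{\nabla}_j$ on the quotient $\mathrm{coker}\,\phi=\mathcal{F}/\mathrm{im}\,\phi$. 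In both cases the resulting operators still satisfy the Leibniz identity with respect to the induced $\mathcal{A}_\Theta$-action and still commute pairwise. Thus $(\ker\phi,\nabla^{\mathcal{E}}|_{\ker\phi})$ and $(\mathcal{F}/\mathrm{im}\,\phi,\overline{\nabla})$ carry all the algebraic data of a holomorphic vector bundle, and become honest objects of $\mathcal{H}o\ell(\mathcal{A}_\Theta)$ as soon as one knows that the underlying $\mathcal{A}_\Theta$-modules are finitely generated projective.

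The crux, and the step I expect to be the main obstacle, is therefore the following: for every morphism $\phi$ in $\mathcal{H}o\ell(\mathcal{A}_\Theta)$ the module $\mathrm{im}\,\phi$ is finitely generated projective over $\mathcal{A}_\Theta$. Granting this, the exact sequences of $\mathcal{A}_\Theta$-modules $0\longrightarrow\ker\phi\longrightarrow\mathcal{E}\longrightarrow\mathrm{im}\,\phi\longrightarrow 0$ and $0\longrightarrow\mathrm{im}\,\phi\longrightarrow\mathcal{F}\longrightarrow\mathcal{F}/\mathrm{im}\,\phi\longrightarrow 0$ both split, exhibiting $\ker\phi$ and $\mathrm{coker}\,\phi$ as direct summands of $\mathcal{E}$ and $\mathcal{F}$, hence as f.g.p.\ modules. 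To establish the projectivity of $\mathrm{im}\,\phi$ I would adapt to arbitrary $n$ the arguments of Polishchuk--Schwarz (\cite{PS}, extended in \cite{BS}); the features of $\mathcal{A}_\Theta$ that enter there are its spectral invariance inside the $C^*$-completion $\mathscr{A}$ (equivalently, closure under holomorphic functional calculus) together with the explicit description of its finitely generated projective modules and the availability of Hermitian structures on them (as in the free case of Propn.~\ref{holomorphic structure on free module}), which together let one argue that a module map intertwining the $\overline{\partial}$-operators has complemented image. This is the only place where the concrete structure of $\mathcal{A}_\Theta$, rather than the general $C^*$-dynamical framework of Section~(3), is used.

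Finally I would verify that the canonical comparison morphism $\mathrm{coim}\,\phi\longrightarrow\mathrm{im}\,\phi$ is an isomorphism in $\mathcal{H}o\ell(\mathcal{A}_\Theta)$. Once kernels and cokernels are available as objects, $\mathrm{coim}\,\phi=\mathcal{E}/\ker\phi$ and $\mathrm{im}\,\phi=\ker(\mathcal{F}\to\mathcal{F}/\mathrm{im}\,\phi)$, and the comparison map is the one induced by $\phi$; at the level of underlying modules it is the usual isomorphism $\mathcal{E}/\ker\phi\xrightarrow{\,\sim\,}\mathrm{im}\,\phi$, and since it intertwines the induced operators $\overline{\nabla}_j$, both it and its module-theoretic inverse are morphisms in the category. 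An additive category in which every morphism admits a kernel and a cokernel and in which the canonical map $\mathrm{coim}\,\phi\to\mathrm{im}\,\phi$ is always an isomorphism is abelian; hence $\mathcal{H}o\ell(\mathcal{A}_\Theta)$ is an abelian category, and the exact sequences in cohomology obtained above are precisely the long exact sequences associated with it.
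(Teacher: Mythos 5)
Your overall architecture is the same as the paper's (which follows Beggs--Smith, Propn.\ $[4.5]$ of \cite{BS}): given a morphism $\phi$, restrict the holomorphic structure to $\ker\phi$, push it to $\mathrm{coker}\,\phi$, check that the Leibniz rule and flatness survive, and verify the universal properties. The paper does this with the abstract $\overline{\partial}$-connection, using exactness of the four-term sequence tensored with the free module $\Omega^{0,1}_{\partial,\overline{\partial}}(\mathcal{A}_\Theta)$, whereas you work in the coordinates $(\nabla_1,\dots,\nabla_n)$ of Propn.\ (\ref{criterion for obtaining holomorphic structure}); these are equivalent. You have also correctly isolated the point the paper passes over in silence: objects of $\mathcal{H}o\ell(\mathcal{A}_\Theta)$ are required to be f.g.p., so one must show that $\ker\phi$ and $\mathrm{coker}\,\phi$ are f.g.p. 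It is precisely your treatment of this point that breaks down.

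First, the crux is only sketched, not proved: ``adapt the arguments of Polishchuk--Schwarz'' is its entire content, and the ingredients you name do not suffice. Spectral invariance and Hermitian structures on free modules are available for the commutative torus as well, where the claim is false (the cokernel of a map $\mathcal{O}\to\mathcal{O}(p)$ vanishing at a point is a skyscraper, not a bundle); the hypothesis that $\wedge_\Theta+\mathbb{Z}^{2n}$ is dense must enter. The fact that actually does the work in \cite{PS} is the Connes-type theorem that a finitely generated $\mathcal{A}_\Theta$-module admitting a connection is projective, and it should be applied directly to $\ker\phi$ and $\mathrm{coker}\,\phi$ via the induced operators you have just constructed. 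Second, there is a logical error in your reduction: projectivity of $\mathrm{im}\,\phi$ does split $0\to\ker\phi\to\mathcal{E}\to\mathrm{im}\,\phi\to 0$ (a short exact sequence splits when its \emph{quotient} is projective), but it does not split $0\to\mathrm{im}\,\phi\to\mathcal{F}\to\mathcal{F}/\mathrm{im}\,\phi\to 0$, since a projective submodule need not be a direct summand (compare $2\mathbb{Z}\subset\mathbb{Z}$). So even granting your crux, the projectivity of $\mathrm{coker}\,\phi$ does not follow. To be fair, the paper's own proof also leaves the f.g.p.\ issue implicit, deferring entirely to \cite{BS}; but as written your proposal neither proves the key step nor correctly derives the remaining claims from it.
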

\begin{proof}
We follow the proof of Propn. $[4.5]$ in (\cite{BS}). Let $\,\Phi:(\mathcal{E}_1,\nabla_1)\rightarrow(\mathcal{E}_2,\nabla_2)$ be a morphism in
$\mathcal{H}o\ell(\mathcal{A}_\Theta)$. Consider the following exact sequence
\begin{center}
$0\longrightarrow Ker(\phi)\longrightarrow\mathcal{E}_1\xrightarrow{\,\,\phi\,\,}\mathcal{E}_2\longrightarrow Coker(\phi)\longrightarrow 0$
\end{center}
Denote $Ker(\phi)=K$ and $Coker(\phi)=C$. Since, $\,\Omega_{\partial,\overline{\partial}}^{0,1}(\mathcal{A}_\Theta)$ is a free module
(Lemma [\ref{complex differential form for torus}]), we get that the following sequence
\begin{center}
$0\longrightarrow\Omega_{\partial,\overline{\partial}}^{0,1}(\mathcal{A}_\Theta)\otimes_{\mathcal{A}_\Theta}K\longrightarrow
\Omega_{\partial,\overline{\partial}}^{0,1}(\mathcal{A}_\Theta)\otimes_{\mathcal{A}_\Theta}\mathcal{E}_1\xrightarrow{id\otimes\phi}
\Omega_{\partial,\overline{\partial}}^{0,1}(\mathcal{A}_\Theta)\otimes_{\mathcal{A}_\Theta}\mathcal{E}_2\longrightarrow
\Omega_{\partial,\overline{\partial}}^{0,1}(\mathcal{A}_\Theta)\otimes_{\mathcal{A}_\Theta}C\longrightarrow 0$
\end{center}
is exact. There are unique maps $\nabla_K:K\longrightarrow\Omega_{\partial,\overline{\partial}}^{0,1}(\mathcal{A}_\Theta)\otimes_{\mathcal{A}_\Theta}K$
obtained by restricting $\nabla_1$ to $K$ and $\nabla_C:C\longrightarrow\Omega_{\partial,\overline{\partial}}^{0,1}(\mathcal{A}_\Theta)
\otimes_{\mathcal{A}_\Theta}C$ induced by $\nabla_2$, making the following diagram
\begin{center}
\begin{tikzcd}
0\arrow{r} & K\arrow{r}\arrow{d}{\nabla_K} & \mathcal{E}_1\arrow{r}{\phi}\arrow{d}{\nabla_1} & \mathcal{E}_2\arrow{r}\arrow{d}{\nabla_2} &
C\arrow{r}\arrow{d}{\nabla_C} & 0 \\
0\arrow{r} & \Omega_{\partial,\overline{\partial}}^{0,1}\otimes_{\mathcal{A}_\Theta} K\arrow{r} & 
\Omega_{\partial,\overline{\partial}}^{0,1}\otimes_{\mathcal{A}_\Theta}\mathcal{E}_1\arrow{r}{1\otimes\phi} & 
\Omega_{\partial,\overline{\partial}}^{0,1}\otimes_{\mathcal{A}_\Theta}\mathcal{E}_2\arrow{r} & 
\Omega_{\partial,\overline{\partial}}^{0,1}\otimes_{\mathcal{A}_\Theta} C\arrow{r} & 0
\end{tikzcd}
\end{center}
commutative. Now, it is easy to check that $(K,\nabla_K)$ and $(C,\nabla_C)$ are respectively kernel and cokernel for $\Phi$ in $\mathcal{H}o\ell
(\mathcal{A}_\Theta)$. This concludes the proof.
\end{proof}
\medskip

\subsection{The case of noncommutative two-torus revisited}\texorpdfstring{$\newline$}{Lg}
In this subsection we revisit the case of noncommutative two-torus $\mathcal{A}_\theta$ studied earlier in (\cite{PS},\cite{P}), and obtain their
set-up as a special case of our general framework.

Recall from Part $(3)$ in Propn. (\ref{higher complex differential form for torus}) that the noncommutative space of complex two forms
\begin{center}
$\Omega_{\partial,\overline\partial}^{0,2}(\mathcal{A}_\theta):=span\{a[\,\overline{\partial},b][\,\overline{\partial},c]:a,b,c\in\mathcal{A}_\theta\}$
\end{center}
vanishes identically for the case of noncommutative two-torus. Because of this reason for any $\overline\partial$-connection $\nabla:\mathcal{E}
\longrightarrow\Omega_{\partial,\overline\partial}^{0,1}(\mathcal{A}_\theta)\otimes_{\mathcal{A}_\theta}\mathcal{E}$, the associated
$\overline\partial$-curvature $\varTheta_\nabla:\mathcal{E}\longrightarrow\Omega_{\partial,\overline\partial}^{0,2}(\mathcal{A}_\theta)
\otimes_{\mathcal{A}_\theta}\mathcal{E}$ is always zero i,e. $\nabla$ is always $\overline\partial$-flat. Also, as observed in Lemma
(\ref{complex differential form for torus}), we have
\begin{align*}
\Phi:\Omega_{\partial,\overline\partial}^{0,1}(\mathcal{A}_\theta) &\longrightarrow \mathcal{A}_\theta\\
a[\,\overline\partial,b] &\longmapsto a(\partial_2+i\partial_1)(b)
\end{align*}
is an $\mathcal{A}_\theta$-bimodule isomorphism. Hence, for any f.g.p. left $\mathcal{A}_\theta$-module $\mathcal{E}$ we get $\Omega_{\partial,
\overline\partial}^{0,1}(\mathcal{A}_\theta)\otimes_{\mathcal{A}_\theta}\mathcal{E}$ is canonically isomorphic with $\mathcal{E}$, since
$\mathcal{A}_\theta$ is unital. Therefore, a holomorphic structure on $\mathcal{E}$ is given by a $\mathbb{C}$-linear map $\nabla:\mathcal{E}
\longrightarrow\mathcal{E}$ such that
\begin{center}
$\nabla(a\xi)=a\nabla(\xi)+(\partial_2+i\partial_1)(a)\xi$
\end{center}
for all $\xi\in\mathcal{E}$ and $a\in\mathcal{A}_\theta$. For arbitrary $a\in
\mathcal{A}_\theta$ of the form $\sum_{(r_1,r_2)\in\mathbb{Z}^2}\alpha_{r_1,r_2}U_1^{r_1}U_2^{r_2}$ we see that
\begin{center}
$(\partial_2+i\partial_1)(a)=(r_2+ir_1)a\,.$
\end{center}
If we denote $\tau$ to be the purely imaginary number $i$ then the derivation on $\mathcal{A}_\theta$ defined by
\begin{center}
$\partial_\tau\left(\sum_{(r_1,r_2)\in\mathbb{Z}^2}\alpha_{r_1,r_2}U_1^{r_1}U_2^{r_2}\right):=2\pi i\sum_{(r_1,r_2)\in\mathbb{Z}^2}(r_1\tau+r_2)
\alpha_{r_1,r_2}U_1^{r_1}U_2^{r_2}$
\end{center}
is equal to $\,\Phi\circ[\,\overline\partial,.]$. This is the complex structure considered in (\cite{PS},\cite{P}) for $\mathcal{A}_\theta$, and we
see that the definition of holomorphic vector bundle given in (\cite{PS}) for the case of noncommutative two-torus $\mathcal{A}_\theta$ is a special
case of the general definition given in (Defn. [\ref{holomorphic vector bundle}]).

Moreover, the complex $\left(\Omega_{\partial,\overline\partial}^{0,\bullet}(\mathcal{A}_\theta)\otimes_{\mathcal{A}_\theta}\mathcal{E},\nabla\right)$
becomes just
\begin{center}
$0\longrightarrow\mathcal{E}\xrightarrow{\,\,\nabla\,\,}\mathcal{E}\longrightarrow 0$
\end{center}
and hence, the cohomology becomes
\begin{center}
$H^0(\mathcal{E},\nabla)=Ker\{\nabla:\mathcal{E}\longrightarrow\mathcal{E}\}\quad$ and $\quad H^1(\mathcal{E},\nabla)=
Coker\{\nabla:\mathcal{E}\longrightarrow\mathcal{E}\}$.
\end{center}
We see that this is the definition of the cohomology given in (\cite{PS}).
\bigskip

\section*{Open questions}
\begin{enumerate}
\item At present, it is not clear to us whether more K\"ahler structures exist in Th. (\ref{final theorem}). If we carefully look at the proof of
Th. (\ref{N=(2,2) from dynamical system}) and Propn. (\ref{total number of Kahler structure}) then we see that there may be several other choice
for the anti-selfadjoint operator $\mathcal{I}$ acting on $L^2(\mathcal{A},\tau)^{N^2}$, apart from those already produced here. Classifying all
possible K\"ahler structures should be an interesting investigation.
\item Under which condition(s) the $N=(2,2)$ K\"ahler spectral data obtained in Thm. (\ref{N=(2,2) from dynamical system}) extends further to
$N=(4,4)$ hyper-K\"ahler spectral data (Defn. $[2.37]$ in \cite{FGR2})? Certainly, one necessary condition would be $dim(G)=4k$, where $G=\mathbb{T}^{m}
\times\mathbb{R}^n$. But question is whether this is also the sufficient condition. Note that in the classical case, the $4k$-dimensional tori are
actually hyper-K\"ahler manifolds. We expect the same for the $4k$-dimensional noncommutative tori also.
\end{enumerate}
\bigskip

\section*{Acknowledgement}
Author gratefully acknowledges financial support of DST India through INSPIRE Faculty award (Award No. DST/INSPIRE/04/2015/000901).
\bigskip

\end{document}